\newcommand{\R}{\mathbb{R}}
\newcommand{\C}{\mathbb{C}}
\newcommand{\Z}{\mathbb{Z}}
\numberwithin{equation}{section}
  \let\c@subsection\c@equation
\theoremstyle{plain}
\newtheorem{thm}[equation]{Theorem}
\newtheorem{lem}[equation]{Lemma}
\newtheorem{prop}[equation]{Proposition}
\theoremstyle{definition}
\newtheorem{defn}[equation]{Definition}
\newtheorem{exmp}[equation]{Example}
\theoremstyle{remark}
\newtheorem{rem}[equation]{Remark}
\numberwithin{equation}{section}
\title{Vector-valued non-homogeneous $Tb$ theorem on metric measure spaces}
\author{Henri Martikainen}\thanks{The author is supported by the Academy of Finland through the project ``$L^p$ methods in harmonic analysis''. The paper is part of the author's doctoral thesis project
written under the supervision of Academy research fellow Tuomas Hytönen -- the guidance of whom is gratefully acknowledged.}
\address{Department of Mathematics and Statistics, University of Helsinki, P.O.B. 68, FI-00014 Helsinki, Finland}
\email{henri.martikainen@helsinki.fi}
\subjclass[2000]{42B20 (Primary); 30L99, 46B09, 46E40, 60D05, 60G46 (Secondary)}
\keywords{Calder\'on--Zygmund operator, non-doubling measure, probabilistic constructions in metric spaces, martingale difference, paraproduct}
\begin{document}

\maketitle

\begin{abstract}
We prove a vector-valued non-homogeneous $Tb$ theorem on certain quasimetric spaces equipped with what we call an upper doubling measure. Essentially, we merge
recent techniques from the domain and range side of things, achieving a $Tb$ theorem which is quite general with respect to both of them.
\end{abstract}

\section{Introduction}
In the seminal paper \cite{NTV} by Nazarov, Treil and Volberg, it was already indicated that it should be possible to prove some version of their (Euclidean) non-homogeneous $Tb$ theorem also
in a more abstract metric space setting, just like the well-established homogeneous theory in this generality \cite{DJS}, \cite{Ch}.
A recent paper \cite{HM} by the author and Tuomas Hytönen shows that this is indeed the case: a non-homogeneous $Tb$ theorem
in the general framework of quasimetric spaces equipped with an upper doubling measure (this is a class of measures that encompasses both the power bounded measures, and also, the more classical doubling measures) was proved.
See also \cite{VW2}.

It is natural to seek to extend the generality in the range too (instead of considering only scalar valued operators). These type of developments, just like the regular scalar valued $Tb$ theorems, have a long history (for a discussion of the
origins of the vector-valued $Tb$ theory consult e.g. \cite{Hy2}). In the very recent work \cite{MP}, a UMD-valued $T1$ theorem is established in metric spaces -- however, only with Ahlfors-regular measures $\mu$ (i.e. $\mu(B(x,r)) \sim r^m$).
This assumption seems to be necessary for their method of proof based on rearrangements of dyadic cubes.
In \cite{Hy2} a vector-valued non-homogeneous $Tb$ theorem is proved in the case of the domain being $\R^n$ and the relevant measure $\mu$ being power bounded (that is, $\mu(B(x,r)) \le Cr^m$).

The methods of \cite{Hy2} are already less dependent on the structure of $\R^n$ than much of the earlier vector-valued work, thus foreshadowing the possibility of extending to more general domains.
The goal here is to carefully combine key techniques from the recent developments \cite{HM} and \cite{Hy2}
and obtain a proof of a non-homogeneous $Tb$ theorem, which is simultaneously general with respect to the domain (a metric space), the measure (an upper doubling measure) and the range (a UMD Banach space).
\section{Preliminaries and the main result}
\subsection{Geometrically doubling quasimetric spaces}
A quasimetric space $(X, \rho)$ is geometrically doubling if every open ball $B(x,r) = \{y \in X: \rho(y,x) < r\}$ can be covered by at most $N$ balls of radius $r/2$.
A basic observation is that in a geometrically doubling quasimetric space, a ball $B(x,r)$ can contain the centers $x_i$ of at most $N\alpha^{-n}$ disjoint balls $B(x_i, \alpha r)$ for $\alpha \in (0,1]$.
Instead of working with what we called reqular quasimetrics in \cite{HM}, it will be assumed for added convenience that $\rho = d^{\beta}$ for some metric $d$ and some constant $\beta \ge 1$ (and not
just equivalent to such a power of a metric). Then $d$-balls are $\rho$-balls and even the weak boundedness property works for both type of balls (this was a somewhat of an inconvenience before). This seems
to be general enough to cover many interesting cases.
\subsection{Upper doubling measures}
A Borel measure $\mu$ in some quasimetric space $(X, \rho)$ is called upper doubling if there exists a dominating function $\lambda\colon X \times (0, \infty) \to (0,\infty)$ so that
$r \mapsto \lambda(x,r)$ is non-decreasing, $\lambda(x,2r) \le C_{\lambda}\lambda(x,r)$ and $\mu(B(x,r)) \le \lambda(x,r)$ for all $x \in X$ and $r > 0$. The number $d:=\log_2 C_{\lambda}$ can be thought of as (an upper bound for) a dimension of the measure $\mu$, and it will play a similar role as the quantity denoted by the same symbol in \cite{NTV}.
\subsection{Standard kernels and Calder\'on--Zygmund operators}
Define $\Delta = \{(x,x): x \in X\}$. A standard kernel
is a mapping $K\colon X^2 \setminus \Delta \to \C$ for which we have for some $\alpha > 0$ and $B,C < \infty$ that 
\begin{displaymath}
|K(x,y)| \le B\min\Big(\frac{1}{\lambda(x, \rho(x,y))}, \frac{1}{\lambda(y, \rho(x,y))}\Big), \qquad x \ne y,
\end{displaymath}
\begin{displaymath}
|K(x,y) - K(x',y)| \le B\frac{\rho(x,x')^{\alpha}}{\rho(x,y)^{\alpha}\lambda(x, \rho(x,y))}, \qquad \rho(x,y) \ge C\rho(x,x'),
\end{displaymath}
and
\begin{displaymath}
|K(x,y) - K(x,y')| \le B\frac{\rho(y,y')^{\alpha}}{\rho(x,y)^{\alpha}\lambda(y, \rho(x,y))}, \qquad \rho(x,y) \ge C\rho(y,y').
\end{displaymath}
The smallest admissible $B$ will be denoted by $\|K\|_{CZ_{\alpha}}$; it is understood that the parameter $C$ has been fixed, and it will not be indicated explicitly in this notation.

Let $T\colon f \mapsto Tf$ be a linear operator acting on some functions $f$ (which we shall specify in more detail later). It is called a Calder\'on--Zygmund operator with kernel $K$ if
\begin{displaymath}
Tf(x) = \int_X K(x,y)f(y)\,d\mu(y)
\end{displaymath}
for $x$ outside the support of $f$.
\subsection{Accretivity}
A function $b \in L^{\infty}(\mu)$ is called accretive if Re$\,b \ge a > 0$ almost everywhere. We can also make do with the following weaker form of accretivity:
$|\int_A b\,d\mu| \ge a\mu(A)$ for all Borel sets $A$ which satisfy the condition that $B \subset A \subset CB$ for some ball $B = B(A)$, where $C$ is some large
constant which depends on the quasimetric $\rho$. (One can e.g. take $C=500$ if dealing with metrics).
\subsection{Weak boundedness property}
An operator $T$ is said to satisfy the weak boundedness property if $|\langle T\chi_B, \chi_B\rangle| \le A\mu(\Lambda B)$ for all balls $B$ and for some fixed constants $A > 0$ and $\Lambda > 1$.
Here $\langle\cdot\,,\, \cdot \rangle$ is the bilinear duality $\langle f, g\rangle = \int fg\,d\mu$. Let us denote the smallest admissible constant above by $\|T\|_{WBP_{\Lambda}}$.

In the $Tb$ theorem, the weak boundedness property is demanded from the operator $M_{b_2}TM_{b_1}$, where
$b_1$ and $b_2$ are accretive functions and $M_b \colon f \mapsto bf$.
\subsection{BMO and RBMO}
We say that $f \in L^1_{\textrm{loc}}(\mu)$ belongs to BMO$^p_{\kappa}(\mu)$, if for any ball $B \subset X$ there
exists a constant $f_B$ such that
\begin{displaymath}
\Big( \int_B |f-f_B|^p\,d\mu\Big)^{1/p} \le L\mu(\kappa B)^{1/p},
\end{displaymath}
where the constant $L$ does not depend on $B$.

Let $\varrho > 1$. A function $f \in L^1_{\textrm{loc}}(\mu)$ belongs to RBMO$(\mu)$ if there exists a constant $L$, and for every ball $B$, a constant $f_B$, such that one has
\begin{displaymath}
\int_B |f-f_B| \,d\mu \le L\mu(\varrho B),
\end{displaymath}
and, whenever $B \subset B_1$ are two balls,
\begin{displaymath}
|f_B - f_{B_1}| \le L\Big(1 + \int_{2B_1 \setminus B} \frac{1}{\lambda(c_B, \rho(x,c_B))}\,d\mu(x)\Big).
\end{displaymath}
We do not demand that $f_B$ be the average $\langle f \rangle_B = \frac{1}{\mu(B)}\int_B f\,d\mu$, and this is actually important in the RBMO$(\mu)$-condition. The useful thing here is that the space RBMO$(\mu)$ is independent of the choice of parameter $\varrho > 1$ and satisfies the John--Nirenberg inequality. For these results in our setting, see \cite{Hy1}.
The norms in these spaces are defined in the obvious way as the best constant $L$.
\subsection{UMD Banach spaces}
A Banach space $Y$ is said to satisfy the UMD property if there holds that
\begin{displaymath}
\Big\| \sum_{k=1}^n \epsilon_k d_k \Big\|_{L^p(\Omega, Y)} \le C\Big\| \sum_{k=1}^n d_k \Big\|_{L^p(\Omega, Y)}
\end{displaymath}
whenever $(d_k)_{k=1}^n$ is a martingale difference sequence in $L^p(\Omega, Y)$ and $\epsilon_k = \pm 1$ are constants. This property does not depend on the parameter
$1 < p < \infty$ in any way.
\subsection{Vinogradov notation and implicit constants}
The notation $f \lesssim g$ is used synonymously with $f \le Gg$ for some constant $G$. We also use $f \sim g$ if $f \lesssim g \lesssim f$. The dependence on the various parameters
should be somewhat clear, but basically $G$ may depend on the various constants of the avove definitions, and on an auxiliary parameter $r$ (which is eventually fixed to depend on the above
parameters only).

\medskip

We now state our main theorem.

\begin{thm}\label{maintheorem}
Let $(X,\rho)$ be a geometrically doubling quasimetric space so that $\rho = d^{\beta}$ for some metric $d$ and $\beta \ge 1$, and assume that this space is equipped with an upper doubling measure $\mu$.
Let $Y$ be a UMD space and $1 < p < \infty$. 
Let $T$ be an $L^p(X,Y)$-bounded Calder\'on--Zygmund operator with a standard kernel $K$,
$b_1$ and $b_2$ be two accretive functions, $\alpha >0 $ and $\kappa,\Lambda > 1$. Then
\begin{equation*}
  \|T\|\lesssim \|Tb_1\|_{BMO^1_{\kappa}(\mu)}+\|T^*b_2\|_{BMO^1_{\kappa}(\mu)}
    +\|M_{b_2}TM_{b_1}\|_{WBP_{\Lambda}}+\|K\|_{CZ_{\alpha}},
\end{equation*}
where the first three terms on the right are in turn dominated by $\|T\|$. Here, of course, $\|T\| = \|T\|_{L^p(X,Y) \to L^p(X,Y)}$.
\end{thm}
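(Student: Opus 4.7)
The strategy is to merge the random-cubes approach of \cite{HM} with the vector-valued machinery of \cite{Hy2}. To start, I would construct random dyadic systems $\mathcal{D}^\omega$ on $(X,\rho)$ as in \cite{HM}, together with $b$-adapted martingale difference operators $D^{b_1}_Q$ and $D^{b_2}_R$ associated to the accretive functions. These are built from $b_j$-weighted averages on cubes, and the weak accretivity of Section~2.4 is exactly what is needed to define them on the collection of cubes one encounters. The a priori $L^p(X,Y)$-boundedness of $T$ (and, via duality in the UMD range, of $T^*$) is used only qualitatively at the beginning: combined with the UMD property of $Y$, it lets one expand a pairing $\langle Tf,g\rangle$ into the corresponding $b$-adapted martingale differences and truncate to finitely many cubes so that all forthcoming manipulations are legal.

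Next, following the Nazarov--Treil--Volberg philosophy, I would split the cubes into \emph{good} and \emph{bad} ones, a cube being bad if it sits too close to the boundary of some much larger cube. The key probabilistic fact, proven in the upper-doubling metric setting already in \cite{HM}, is that the expectation over $\omega$ of the $L^p$-mass carried by bad cubes is a small multiple of the full mass. Averaging the bilinear form against $\omega$ and using the a priori bound then allows the bad part to be absorbed, so that it suffices to estimate
\begin{equation*}
\sum_{Q,R\text{ good}} \langle T D^{b_1}_Q f,\, D^{b_2}_R g\rangle,
\end{equation*}
which I would split into three regimes: separated pairs (far apart relative to their common generation), adjacent pairs of comparable size, and deeply nested pairs.

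For separated pairs, the CZ kernel estimates of Section~2.3 together with the goodness parameter yield a geometric decay in the generation gap; the vector-valued case is handled by replacing $|\cdot|$ by $\|\cdot\|_Y$ and controlling the resulting sum through the UMD square-function equivalent of the martingale expansion. The adjacent case reduces, after standard manipulations, to the weak boundedness property of $M_{b_2}TM_{b_1}$, since $D^{b_1}_Q f$ and $D^{b_2}_R g$ can be written as linear combinations of $b_j$-multiples of characteristic functions of child cubes. The nested case is the most delicate: subtracting a suitable average using the vanishing $b$-mean condition, one extracts a main paraproduct term
\begin{equation*}
\Pi_{Tb_1}(f) = \sum_Q \langle f\rangle^{b_1}_Q \cdot D^{b_2,*}_Q(Tb_1)
\end{equation*}
and a symmetric term involving $T^*b_2$, plus a controllable remainder.

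The vector-valued boundedness of such paraproducts, with $\mathrm{BMO}^1_\kappa(\mu)$ symbol (equivalently, via \cite{Hy1}, $\mathrm{RBMO}(\mu)$ symbol), acting from $L^p(X,Y)$ into $L^p(X,Y)$ for $Y$ UMD is the key ingredient to be imported from \cite{Hy2}: it relies on UMD decoupling of the $b$-adapted martingale differences combined with $H^1$--$\mathrm{RBMO}$ duality and the John--Nirenberg inequality available in the upper-doubling setting. I expect this paraproduct estimate to be the main obstacle, since in \cite{Hy2} the argument exploits quite a bit of Euclidean structure of the underlying dyadic cubes, whereas the cubes from \cite{HM} are only probabilistic quasimetric surrogates with imprecise boundary behaviour. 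Recasting the decoupling step so that it tolerates simultaneously the non-homogeneity of $\mu$ (controlled only by $\lambda$), the geometric flaws of the metric cubes, and the general UMD range is the crux of the proof; once this is in place, collecting the separated, adjacent, nested and paraproduct bounds yields the claimed inequality, while the reverse estimates of the first three terms by $\|T\|$ are immediate from the definitions.
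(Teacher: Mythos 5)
Your outline reproduces the broad NTV/Hyt\"onen skeleton correctly (random metric cubes, good/bad reduction, separated/adjacent/nested regimes, a paraproduct carrying $T^*b_2$, RBMO via \cite{Hy1}), but two steps as you describe them would not go through. First, the adjacent case does \emph{not} ``reduce, after standard manipulations, to the weak boundedness property.'' The WBP is assumed only for balls, and metric dyadic cubes have no small-boundary or controlled-overlap structure, so the pairings $\langle \chi_R b_2, T(b_1\chi_Q)\rangle$ for adjacent $Q,R$ of comparable size cannot be fed to the WBP directly. One has to perform the non-homogeneous surgery: almost-cover $Q\cap R$ (minus thin boundary layers) by finitely many separated balls $B$ with $\Lambda B\subset Q\cap R$, use the WBP only for the diagonal terms $\langle\chi_B b_2,T(b_1\chi_B)\rangle$, estimate the separated pieces by the kernel, and control the boundary-region and leftover pieces by the \emph{a priori} bound $\|T\|$ with small constants $c(\epsilon,\upsilon)$ --- the smallness coming from averaging over both random grids (the small-boundary Lemma \ref{boundarylemma}) together with an improved contraction principle that uses the cotype of $Y$ and $Y^*$. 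These $c(\epsilon,\upsilon)\|T\|$ contributions must be absorbed at the very end; in your proposal the a priori bound is spent only on bad cubes, so there is no mechanism for this absorption and the adjacent regime is left unproved.

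Second, since you (correctly, following \cite{Hy2}) restrict \emph{both} $Q$ and $R$ to good cubes, the nested-case correction term requires collapsing the telescoping sum $\sum_{R\supset Q}\langle \Delta^{b_2}_R g/b_2\rangle_Q$ to $\langle g\rangle_R/\langle b_2\rangle_R$, and this identity fails when the sum runs over good $R$ only. In $\R^n$ one removes and restores the goodness restriction because $\mathbb{P}(R\text{ good})$ is a constant independent of $R$ and is independent of the relevant coefficients, a fact that comes from translation invariance; a general metric space has no such symmetry, and this is precisely the new difficulty the paper must solve. It does so by re-engineering the randomization of \cite{HM}: no removal procedure, a tweak making cubes of generation $k<k_0$ depend only on centers of generations $\ge k$, a goodness notion for $Q^k_\alpha$ depending only on the coarser generations of the other grid, and an artificial ``pseudogoodness'' layer forcing $\mathbb{P}(Q\text{ good})=\pi_{\textrm{good}}$ for every cube --- all in order to prove Lemma \ref{goodtonogood}, which is then applied twice (remove the restriction, collapse, restore it). Your proposal instead locates the crux in the UMD decoupling for the paraproduct; in fact, once the above probabilistic reductions and the $\textrm{BMO}^1_\kappa\Rightarrow\textrm{RBMO}\Rightarrow\textrm{BMO}^q_\kappa$ upgrade for $Tb_1,T^*b_2$ are in place, the paraproduct bound and the separated/nested estimates follow \cite{Hy2} (via the tangent martingale trick) with only minor changes, so the missing ideas are the averaging lemma for the goodness restriction and the adjacent-cube surgery with absorption, not the decoupling itself.
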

Note that it suffices to prove the theorem in the case $\beta = 1$, that is, we are working in an honest metric space $(X,d)$ from now on. We give an example before proceeding with the proof
of the theorem.
\begin{exmp}
In \cite[chapter 12]{HM} we gave an example related to the paper \cite{VW}, and there the application was in a situation where the measure in question was genuinely upper doubling (the doubling theory or the theory of power bounded measures
would not have sufficed), and the space was a quasimetric one (so it really was non-homogeneous theory on metric spaces).

Now we give an example which is actually in the homogeneous situation, but as the domain is a metric space and the range is a general UMD space, this seems not to follow from the previous works.
Also, it goes to show that it is convenient to get this doubling theory as a byproduct of the upper doubling theory.

The example we have in mind is the boundedness of the classical Cauchy--Szegö projection as a UMD-valued operator (this question was asked by Tao Mei through a private communication with Tuomas Hytönen, and Mei
had solved this question in the special case when the range space $Y$ is a so-called non-commutative $L^p$ space).
The setting is the Heisenberg group $\mathbb{H}^n$, which is identified with $\R^{2n+1}$, and is a non-abelian group
where the group operation is given by
\begin{displaymath}
x \cdot y = (x_1 + y_1, \ldots, x_{2n} + y_{2n}, x_{2n+1} + y_{2n+1} - 2 \sum_{j=1}^n (x_jy_{j+n} - x_{j+n}y_n)).
\end{displaymath}
The metric is given by
\begin{displaymath}
d(x,y) = \|x^{-1} \cdot y\|
\end{displaymath}
where
\begin{displaymath}
\|x\| = (\|(x_1, \ldots, x_{2n})\|_{\R^{2n}}^4 + x_{2n+1}^2)^{1/4}.
\end{displaymath}
One can also write $x = [\xi,t] \in \mathbb{H}^n = \mathbb{C}^n \times \R$.
We use the Haar measure for $\mathbb{H}^n$ (this is just the Euclidean Lebesgue measure $d\xi dt$ on $\mathbb{C}^n \times \R$). Now
$\lambda(x,r) = Cr^{2n+2}$ for some appropriate constant $C$.

Using the above notation $x = [\xi,t]$, let $K(x) = C(t + i|\xi|)^{-n-1}$. Set
$K(x,y) = K(y^{-1} \cdot  x)$ for $x \ne y$ (i.e. $y^{-1} \cdot x \ne 0$).
The Cauchy--Szegö projection $C$ is an $L^2$-bounded operator of the form
\begin{displaymath}
Cf(x) = \int_{\mathbb{H}^n} K(x,y)f(y)\,dy.
\end{displaymath}
See e.g. \cite{Stein} for a more exhaustive treatment of the Cauchy--Szegö projection.

Clearly the standard kernel estimates known for $K$ are precisely the same as demanded by our theory with our chosen $\lambda$. Thus, as $C$ is a Calder\'on--Zygmund operator which is bounded as a scalar-valued operator
(and thus satisfies the BMO conditions with e.g. $b_1 = b_2 = 1$ and the weak boundedness property), we have by our above $Tb$ (or $T1$ in this case) theorem that $T$ is a bounded operator
$L^p(\mathbb{H}^n, Y) \to L^p(\mathbb{H}^n, Y)$ for every UMD space $Y$ and for every index $p \in (1, \infty)$.
\end{exmp}

\section{John--Nirenberg theorem for $Tb_1$}
In \cite{HM} it was assumed that $Tb_1, T^*b_2 \in \textrm{BMO}^2_{\kappa}(\mu)$ (and this is natural enough for the $L^2$ theory)
so there it was not necessary to deal with the contents of this chapter. However, now that we are directly doing $L^p$ theory, it seems to be more important to
prove that $Tb_1 \in \textrm{BMO}^1_{\kappa}(\mu) \Longrightarrow Tb_1 \in \textrm{RBMO}(\mu) \Longrightarrow Tb_1 \in \textrm{BMO}^q_{\kappa}(\mu)$ for all $1 < q < \infty$ (and similarly for $T^*b_2$). Indeed, otherwise we would need
to assume a priori that $Tb_1, T^*b_2 \in \bigcap_{1 < q < \infty} \textrm{BMO}^q_{\kappa}(\mu)$. This reduction is known in the Euclidean setting with a power bounded measure (see \cite{NTV}). We now work out the details in our setting.
However, only one key lemma really requires some modifications from the proof found in \cite{NTV}, and so we only sketch the other parts of the argument.
See also \cite{Hy1}, where the details of the RBMO$(\mu)$ theory, especially the John--Nirenberg inequality, are worked out in our setting.
\begin{lem}
Consider some fixed ball $B = B(c_B,r_B)$. There exists $R_B \in [r_B, 1.2r_B]$ so that
\begin{displaymath}
\mu(\{x \in X: R_B - r_Bs < d(x,c_B) < R_B + r_Bs\}) \lesssim s\mu(B(c_B,3r_B))
\end{displaymath}
for all $s \in [0, 1.5]$.
\end{lem}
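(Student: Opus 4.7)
The plan is to reduce this one-ball assertion to a one-dimensional maximal-function estimate by pushing $\mu$ forward along the distance function $x \mapsto d(x,c_B)$. Set $\nu(A) = \mu(\{x \in X : d(x,c_B) \in A\})$ for Borel $A \subset \R$, so that
\[ \mu\bigl(\{x : |d(x,c_B)-R| < r_B s\}\bigr) = \nu\bigl((R - r_B s,\, R + r_B s)\bigr). \]
For any $R \in [r_B, 1.2 r_B]$ and $s \in [0,1.5]$ the window $(R - r_B s,\, R + r_B s)$ lies inside $(-0.5 r_B,\, 2.7 r_B)$, so it only sees the truncation $\tilde\nu := \nu|_{[0,\, 3 r_B]}$, which has total mass $\tilde\nu(\R) \le \mu(B(c_B, 3 r_B))$.

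I would then invoke the weak-type $(1,1)$ bound for the centered Hardy--Littlewood maximal operator on $\R$ applied to $\tilde\nu$: the exceptional set
\[ E_\lambda = \Bigl\{R \in \R : \sup_{\sigma > 0} \frac{\tilde\nu\bigl((R-\sigma, R+\sigma)\bigr)}{2\sigma} > \lambda \Bigr\} \]
satisfies $|E_\lambda| \le C_0 \tilde\nu(\R)/\lambda$ for an absolute constant $C_0$ (coming from the one-dimensional Vitali covering lemma). Choosing $\lambda = 10 C_0 \mu(B(c_B, 3 r_B))/r_B$ forces $|E_\lambda| \le r_B/10 < 0.2\, r_B = |[r_B, 1.2 r_B]|$, so some good radius $R_B \in [r_B, 1.2 r_B] \setminus E_\lambda$ exists. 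For this $R_B$ and any $s \in (0, 1.5]$, taking $\sigma = r_B s$ and using that the window is contained in the support of $\tilde\nu$,
\[ \mu\bigl(\{x : |d(x,c_B) - R_B| < r_B s\}\bigr) = \tilde\nu\bigl((R_B - r_B s,\, R_B + r_B s)\bigr) \le 2 r_B s \cdot \lambda \lesssim s \cdot \mu(B(c_B, 3 r_B)), \]
while $s = 0$ is trivial.

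The only delicate bookkeeping is the verification that $\nu$ and $\tilde\nu$ assign the same mass to every window that actually occurs, so that the one-dimensional maximal estimate for $\tilde\nu$ converts cleanly into the desired bound on the original measure; this relies exactly on the bracket constraints $R_B \ge r_B$ and $R_B + r_B s \le 2.7 r_B < 3 r_B$ established in the first paragraph. The real reason I would go through the maximal inequality rather than a plain Chebyshev-type averaging over $R$ is that the latter, applied dyadically to $s_k = 1.5 \cdot 2^{-k}$ with summable weights, only yields a bound of the form $\lesssim (1+\log(1/s))^2\, s\, \mu(B(c_B, 3 r_B))$ after summing the weights; the uniform linear-in-$s$ estimate demanded by the lemma is precisely what the weak-$(1,1)$ inequality supplies in one step. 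This is therefore the step I expect to be the main obstacle to a direct, elementary proof.
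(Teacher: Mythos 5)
Your proof is correct and is essentially the argument behind the paper's citation to \cite{NTV}: push $\mu$ forward under $x \mapsto d(x,c_B)$ and pick $R_B$ outside the exceptional set of the weak-$(1,1)$ bound for the one-dimensional centered maximal function. The only cosmetic point is that you should truncate to $[0,2.7r_B]$ (or $[0,3r_B)$) so that the total mass is controlled by the \emph{open} ball $\mu(B(c_B,3r_B))$, which is all the windows ever see anyway.
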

\begin{proof}
See \cite[p. 184]{NTV}.
\end{proof}
\begin{lem}\label{regint}
If $B = B(c_B,r_B)$ is a ball and $R_B$ is a related regularized radius as in the previous lemma, then it holds that
{\setlength\arraycolsep{2pt}
\begin{eqnarray*}
\int_{B(c_B, R_B)} \int_{B(c_B, 3r_B) \setminus B(c_B, R_B)} \!\!\!\!\!\!\!\!\!\! |K(x,y)|\,d\mu(y)\,d\mu(x) & \lesssim &  \mu(B(c_B,R_B))^{1/2}\mu(B(c_B, 3r_B))^{1/2} \\
& \le & \mu(B(c_B,3r_B)).
\end{eqnarray*}
}
\end{lem}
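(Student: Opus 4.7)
The plan is to fix $x\in B(c_B,R_B)$, prove a pointwise bound on the inner integral
$$f(x):=\int_{B(c_B,3r_B)\setminus B(c_B,R_B)}|K(x,y)|\,d\mu(y),$$
and then integrate over $x$ using a Cauchy--Schwarz step so that the $\mu(B(c_B,R_B))^{1/2}\mu(B(c_B,3r_B))^{1/2}$ splitting emerges naturally, the first factor coming from the trivial volume of the domain of integration and the second from a layered $L^2$-estimate of $f$. The trivial second inequality $\mu(B(c_B,R_B))^{1/2}\mu(B(c_B,3r_B))^{1/2}\le\mu(B(c_B,3r_B))$ then follows from monotonicity of $\mu$.

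For the pointwise estimate, set $\delta(x):=R_B-d(x,c_B)\ge 0$. For any $y$ outside $B(c_B,R_B)$ the triangle inequality forces $d(x,y)\ge\delta(x)$, while trivially $d(x,y)\le R_B+3r_B\lesssim r_B$. Applying the size estimate $|K(x,y)|\le B/\lambda(x,d(x,y))$ and decomposing the $y$-domain dyadically with respect to $d(x,y)$, the doubling of $\lambda$ in its radial argument makes the contribution of each annulus $\{2^k\delta(x)\le d(x,y)<2^{k+1}\delta(x)\}$ of size $O(C_\lambda)$; since the number of relevant annuli is $O(\log(r_B/\delta(x))+1)$, I obtain the key bound $f(x)\lesssim \log(r_B/\delta(x))+1$.

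Next, Cauchy--Schwarz on the outer integral gives
$$\int_{B(c_B,R_B)} f(x)\,d\mu(x)\le \mu(B(c_B,R_B))^{1/2}\Bigl(\int_{B(c_B,R_B)}f(x)^2\,d\mu(x)\Bigr)^{1/2}.$$
To bound the $L^2$-norm, I decompose $B(c_B,R_B)$ into concentric layers $L_k:=\{x:2^{-k-1}r_B<\delta(x)\le 2^{-k}r_B\}$ for $k\ge 0$, on which $\log(r_B/\delta(x))+1\lesssim k+1$. Since $L_k\subset\{x:|d(x,c_B)-R_B|<2^{-k}r_B\}$, the previous lemma (with $s=2^{-k}\le 1.5$) yields $\mu(L_k)\lesssim 2^{-k}\mu(B(c_B,3r_B))$, and the geometric/polynomial series $\sum_{k\ge 0}(k+1)^2 2^{-k}$ converges, so $\int f^2\,d\mu\lesssim\mu(B(c_B,3r_B))$, which finishes the proof.

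The main obstacle is in the pointwise step: upper doubling asserts no comparability between $\lambda(x,\cdot)$ at different base points, so I cannot replace $\lambda(x,d(x,y))$ by $\lambda(c_B,\cdot)$ to factor the problem. The whole $y$-integral must be organized around $x$ itself, and it is precisely the freedom in choosing $R_B$ from the preceding regularization lemma which then permits a usable $L^2$ control of the logarithmic factor $\log(r_B/\delta(x))$, converting a pointwise blow-up near the sphere into a controllable global bound. Everything else is essentially bookkeeping with dyadic shells and the doubling constant $C_\lambda$.
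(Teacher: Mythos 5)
Your proof is correct and follows essentially the same route as the paper: the identical pointwise bound $f(x)\lesssim \log\big(r_B/(R_B-d(x,c_B))\big)+1$ via dyadic annuli centered at $x$ and the size estimate for $K$, the same Cauchy--Schwarz step producing the factor $\mu(B(c_B,R_B))^{1/2}$, and the same use of the regularization lemma to control thin shells around the sphere of radius $R_B$; the paper merely evaluates the resulting $L^2$-norm of the logarithm via the layer-cake formula with $s=10e^{-\sqrt{t}}$, whereas you use dyadic layers with $s\sim 2^{-k}$, which is an equivalent bookkeeping. Only cosmetic care is needed in your layering: points with $\delta(x)\in(r_B,\,1.2r_B]$ (recall $R_B$ may exceed $r_B$) are not covered by the layers $k\ge 0$, but there the logarithm is $O(1)$ and the trivial bound $\mu(B(c_B,R_B))\le\mu(B(c_B,3r_B))$ suffices, and for $k=0$ (or at the layer boundaries) one should take $s$ slightly larger, e.g.\ $s=2^{-k+1}$ for $k\ge 1$ and the trivial bound for $k=0$, to stay within the hypotheses of the previous lemma.
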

\begin{proof}
Consider $f(x) = \int_{B(c_B, 3r_B) \setminus B(c_B, R_B)} |K(x,y)|\,d\mu(y)$, $x \in B(c_B, R_B)$. Fix $x \in B(c_B, R_B)$ for the moment and note that we have for all $y \in B(c_B, 3r_B) \setminus B(c_B, R_B)$ that
$d(x,y) \le R_B + 3r_B \le 4.2r_B < 5r_B$ and $d(x,y) \ge d(y,c_B) - d(x,c_B) \ge R_B - d(x,c_B)$. We temporarily set $h = R_B - d(x,c_B)$ for this fixed $x$ and estimate
{\setlength\arraycolsep{2pt}
\begin{eqnarray*}
f(x) & \lesssim & \int_{h \le d(x,y) < 5r_B} \frac{d\mu(y)}{\lambda(x, d(x,y))} \\
& \le & \sum_{1 \le j < \log_2(10r_B/h)} \int_{2^{j-1}h\le d(x,y) < 2^jh} \frac{d\mu(y)}{\lambda(x, d(x,y))} \\
& \le & \sum_{1 \le j < \log_2(10r_B/h)} \frac{\mu(B(x, 2^jh))}{\lambda(x, 2^{j-1}h)} \\
& \lesssim & \log(10r_B/h) \\
& = & \log\Big( \frac{10r_B}{R_B - d(x,c_B)}\Big).
\end{eqnarray*}
}
This implies through Hölder's inequality that
\begin{displaymath}
\int_{B(c_B, R_B)} \!\!\! f(x)\,d\mu(x) \lesssim \mu(B(c_B, R_B))^{1/2}\Big(\int_{B(c_B, R_B)}\!\! \Big[\log\Big( \frac{10r_B}{R_B - d(x,c_B)}\Big)\Big]^2\,d\mu(x)\Big)^{1/2}.
\end{displaymath}
We then continue to note that
\begin{displaymath}
\int_{B(c_B, R_B)} \Big[\log\Big( \frac{10r_B}{R_B - d(x,c_B)}\Big)\Big]^2\,d\mu(x) 
\end{displaymath}
equals
\begin{displaymath}
\int_0^{\infty} \mu\Big(\Big\{x \in B(c_B, R_B):  \Big[\log\Big( \frac{10r_B}{R_B - d(x,c_B)}\Big)\Big]^2 > t\Big\}\Big)\,dt,
\end{displaymath}
which in turn equals
\begin{displaymath}
\int_0^{\infty} \mu(\{x: R_B - 10r_Be^{-\sqrt{t}} < d(x,c_B) < R_B\})\,dt = \Big[\log \Big(\frac{10r_B}{R_B}\Big)\Big]^2\mu(B(c_B,R_B))
\end{displaymath}
\begin{displaymath}
\qquad \qquad \qquad \qquad + \int_{[\log(10r_B/R_B)]^2}^{\infty} \mu(\{x: R_B - 10r_Be^{-\sqrt{t}} < d(x,c_B) < R_B\})\,dt.
\end{displaymath}
Note that $\int_0^{\infty} e^{-\sqrt{t}}\,dt = 2$ and use the previous lemma with $s = 10e^{-\sqrt{t}} \le R_B/r_B \le 1.2 < 1.5$ for $t \ge [\log(10r_B/R_B)]^2$ to get that
\begin{displaymath}
\int_{[\log(10r_B/R_B)]^2}^{\infty} \mu(\{x: R_B - 10r_Be^{-\sqrt{t}} < d(x,c_B) < R_B\})\,dt \lesssim \mu(B(c_B,3r_B)).
\end{displaymath}
This yields the claim.
\end{proof}
\begin{thm}
Under the assumptions of Theorem \ref{maintheorem},
there holds that $Tb_1 \in \textrm{RBMO}(\mu)$, especially $Tb_1 \in \bigcap_{1 < q < \infty} \textrm{BMO}^q_{\kappa_1}(\mu)$ for any $\kappa_1 > 1$.
\end{thm}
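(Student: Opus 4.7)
The plan is to adapt the Euclidean argument of Nazarov--Treil--Volberg \cite{NTV} to our metric, upper-doubling setting, using Lemma \ref{regint} as the replacement for the corresponding Euclidean annular-kernel estimate; as the introduction to this section indicates, the adaptation is mostly routine and I will only outline the main flow.

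For each ball $B = B(c_B, r_B)$ with regularized radius $R_B \in [r_B, 1.2 r_B]$ from the preceding lemma, set $B' := B(c_B, R_B)$ and define the RBMO representative
\[
f_B := T(b_1 \chi_{X \setminus B'})(c_B) = \int_{X \setminus B'} K(c_B, y) b_1(y)\,d\mu(y),
\]
understood as an absolutely convergent kernel integral (or, if $\mu(X)=\infty$ forces only a differential reading of the tail, up to a fixed additive constant that does not affect the RBMO norm). Standard annular summation of the kernel smoothness, together with the upper-doubling property of $\lambda$ and $\|b_1\|_\infty < \infty$, shows that $T(b_1 \chi_{X \setminus B'})$ is within $O(\|b_1\|_\infty)$ of $f_B$ uniformly on $B'$. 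For the first RBMO condition, $\int_B |Tb_1 - f_B|\,d\mu \lesssim \mu(\varrho B)$, I split on $B$:
\[
Tb_1 - f_B = T(b_1 \chi_{B'}) + [T(b_1 \chi_{X \setminus B'}) - f_B],
\]
whose bracket integrates to $O(\mu(B))$ by the above. The near term $T(b_1 \chi_{B'})$ is then rewritten as $Tb_1 - T(b_1 \chi_{3B \setminus B'}) - T(b_1 \chi_{X \setminus 3B})$; the $\textrm{BMO}^1_\kappa$-hypothesis controls the $Tb_1$ contribution against a suitable constant close to $f_B$, Lemma \ref{regint} controls $\int_B |T(b_1 \chi_{3B \setminus B'})|\,d\mu \lesssim \mu(3B)$, and kernel smoothness allows $T(b_1 \chi_{X \setminus 3B})$ to be treated as essentially constant across $B$.

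For the second RBMO condition with $B \subseteq B_1$, writing $B'_1 := B(c_{B_1}, R_{B_1})$, I expand
\[
f_B - f_{B_1} = \int_{B'_1 \setminus B'} K(c_B, y) b_1(y)\,d\mu(y) + \int_{X \setminus B'_1} [K(c_B, y) - K(c_{B_1}, y)] b_1(y)\,d\mu(y).
\]
The first integral is bounded by $\|b_1\|_\infty \int_{B'_1 \setminus B'} \lambda(c_B, d(c_B, y))^{-1}\,d\mu(y)$, which is dominated by the RBMO tail $\int_{2B_1 \setminus B} \lambda(c_B, d(c_B, y))^{-1}\,d\mu(y)$ plus an $O(1)$ correction absorbing the thin rings $B' \setminus B$ and (when relevant) $B'_1 \setminus 2B_1$ via the measure estimate underlying $R_B$. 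The second integral is $O(1)$ by the standard annular summation of the kernel smoothness in the first variable, using $d(c_B, c_{B_1}) \le r_{B_1}$. Together these give $Tb_1 \in \textrm{RBMO}(\mu)$; the "especially" part then follows by applying the John--Nirenberg inequality for RBMO in our setting \cite{Hy1}, which yields $Tb_1 \in \textrm{BMO}^q_{\kappa_1}(\mu)$ for every $1 < q < \infty$ and any $\kappa_1 > 1$.

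The main obstacle lies in the bookkeeping for the first RBMO condition: estimating $\int_B |T(b_1 \chi_{B'})|\,d\mu$ has to close without circularity, and this is where the $\textrm{BMO}^1_\kappa$-controlled oscillation of $Tb_1$, the $\mu(3B)$-type bound on the regularized ring $3B \setminus B'$ coming from Lemma \ref{regint}, and the almost-constancy of the outer kernel integrals must be combined just so, leaving only quantities absorbable into $\mu(\varrho B)$. This is exactly the step in which the regularized radius $R_B$ is essential: the thin-annulus measure estimate that defines $R_B$ is precisely what prevents the logarithmic blow-up that would otherwise appear in the kernel integral when $\mu$ is highly non-doubling.
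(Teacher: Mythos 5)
There is a genuine gap, and it sits exactly at the step you yourself flag as ``the main obstacle'' without resolving it. The crux of the argument (as in \cite[chapter 2]{NTV}, and as the paper's proof makes explicit) is the estimate $\int_B |T(b_1\chi_{B'})|\,d\mu \lesssim \mu(\eta B)$ for a fixed dilate $\eta$. Your sketch of this step is circular: the hypothesis $Tb_1 \in \textrm{BMO}^1_{\kappa}(\mu)$ only controls $\int_B |Tb_1 - c_B|\,d\mu$ against an \emph{unknown} constant $c_B$, and the assertion that this constant is ``close to $f_B$'' (in the required sense $|c_B - f_B|\mu(B) \lesssim \mu(\eta B)$) is precisely the content of the hard step, not something the BMO hypothesis hands you. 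To pin down $c_B$ one must pair $Tb_1 - c_B$ against an accretive function on (a set comparable to) $B$: accretivity gives $|\int_B b\,d\mu| \gtrsim \mu(B)$ so the unknown constant can be solved for, and the resulting near-diagonal pairing $\langle b_2\chi_B, T(b_1\chi_{B'})\rangle$ is controlled only through the weak boundedness property of $M_{b_2}TM_{b_1}$ (together with Lemma \ref{regint} to pass between $B$, $B'$ and the thin annulus, and a duality argument to go from the pairing back to the $L^1(B)$ norm). Neither accretivity nor the weak boundedness property appears anywhere in your outline, yet without them the estimate cannot close; kernel estimates, Lemma \ref{regint} and the BMO hypothesis alone are insufficient.

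A secondary inaccuracy: the claim that $T(b_1\chi_{X\setminus B'})$ is within $O(\|b_1\|_\infty)$ of $f_B$ \emph{uniformly} on $B'$ is false, since for $x$ near $\partial B'$ the kernel is singular against the mass just outside $B'$; the correct statement is an averaged one, namely $\int_{B'}|T(b_1\chi_{3B\setminus B'})|\,d\mu \lesssim \mu(3B)$ (this is exactly what Lemma \ref{regint} provides), combined with uniform kernel-smoothness control only for the part of the tail outside a fixed larger dilate such as $3B$. Your treatment of the second RBMO condition (regularity of the constants $f_B$ between nested balls) and the final passage $\textrm{RBMO}(\mu) \Rightarrow \bigcap_{1<q<\infty}\textrm{BMO}^q_{\kappa_1}(\mu)$ via the John--Nirenberg inequality of \cite{Hy1} are in line with the paper, which localizes to $T(\chi_{10B}b_1)$ and reduces everything to the displayed $L^1$ estimate above; but as written your proposal does not prove that estimate.
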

\begin{proof}
It suffices to prove that for every ball $B$ the function $T(\chi_{10B}b_1)$ satisfies the defining properties of the RBMO$(\mu)$ space for all the balls that are subset of $B$, and in such a way that the RBMO norm does not
depend on $B$. To see that this suffices, note that $|Tb_1-T(\chi_{10B}b_1)| \lesssim 1$ on $B$ for all balls $B$. The hardest part of the remaining proof consists of proving that
\begin{displaymath}
\int_B |T(\chi_{2B}b_1)|\,d\mu \lesssim \mu(\eta B)
\end{displaymath}
for $\eta = \max(2\kappa, 2\Lambda, 3)$ (the rest of the proof unfolds naturally).
This inequality follows from duality using the assumption $Tb_1 \in \textrm{BMO}^1_{\kappa}(\mu)$, the weak boundedness property, the previous lemma and the fact
that $b_1$ is accretive. These details follow as in \cite[chapter 2]{NTV}.
\end{proof}
\section{Random dyadic systems and good/bad cubes}
One feature of the proof in \cite{Hy2} is that one basically takes all the cubes to be good in the various summations -- this is in contrast with the proof in \cite{HM} where things were usually summed
so that the bigger cubes are arbitrary but the smaller cubes from the other grid were assumed to be good. This modification seems to be particularly useful when dealing with certain paraproducts in these general UMD spaces.

This leads us to fiddle with our randomization from \cite{HM} quite a bit. We shall make the randomization so that there is no removal procedure involved (unlike in \cite{HM}) -- then a certain index set may serve as a fixed
reference set more conveniently. Such a modification will also be used in a future paper by T. Hytönen and A. Kairema, and the author learned about the details of this modification from them through a private
communication.

Furthermore, we will change the definition of a good cube to be such that given a cube $Q$ its change to be good does not depend on the smaller cubes $R$ with $\ell(R) \le \ell(Q)$.
Related to this we shall also make a minor tweak to our half-open cubes from \cite{HM} (to get a better dependence on the randomized dyadic points).
Finally, we add a layer of artificial badness so that $\mathbb{P}(Q \textrm{ is good})$ does not depend on the particular choice of the cube $Q$.

Let us get to the details. Let $\delta = 1/1000$. We recall from \cite{HM} (see also \cite{Ch} for the original construction)
that given a collection of points $x^k_{\alpha}$ such that $d(x^k_{\alpha}, x^k_{\beta}) \ge \delta^k/8$ for all $\alpha \ne \beta$ and
$\min_{\alpha} d(x,x^k_{\alpha}) < 4\delta^k$, we may define a certain transitive relation $\le_{\mathcal{D}}$ between these points, and then there exists sets $Q^k_{\alpha}$ (we call these half-open dyadic cubes)
so that for every $k \in \Z$ we have
\begin{displaymath}
X = \bigcup_{\alpha} Q^k_{\alpha},
\end{displaymath}
for every $k \in \Z$ and $\ell \ge k$ it holds that either $Q^k_{\alpha} \cap Q^{\ell}_{\beta} = \emptyset$ or $Q^{\ell}_{\beta} \subset Q^k_{\alpha}$, and for every
$\ell \ge k$ we have
\begin{displaymath}
Q^k_{\alpha} = \bigcup_{\beta: (\ell, \beta) \le_{\mathcal{D}} (k, \alpha)} Q^{\ell}_{\beta}.
\end{displaymath}
This set of cubes is denoted by $\mathcal{D} = \{Q^k_{\alpha}\}$. Moreover, these cubes satisfy that $d(Q^k_{\alpha}) < C_0\delta^k$ and $B(x^k_{\alpha}, C_1\delta^k) \subset Q^k_{\alpha}$ for
$C_0 = 10$ and $C_1 = 1/100$. We denote $\ell(Q^k_{\alpha}) = \delta^k$.

We now fix some large natural number $k_0$ the value of which will be specified more carefully in the next chapter. The tweak we make to the construction of the above cubes is simple: we follow the construction in \cite[chapter 4]{HM}
except that in the proof of \cite[Theorem 4.4]{HM} we make the construction so that $k=0$, $k < 0$ and $k > 0$ are replaced by $k=k_0$, $k<k_0$ and $k > k_0$ respectively. The point is that the original dyadic cubes of M. Christ
(which may not cover the whole space unlike these half-open ones) have a better dependence on the centers $x^k_{\alpha}$ than the half-open cubes. After the modification, however, we have this more favourable dependence at least for
all the cubes of generations $k \le k_0$. Indeed, now a cube $Q^k_{\alpha}$, where $k \le k_0$, depends only on the centers $x^{\ell}_{\beta}$ for $\ell \ge k$. All the properties stated above remain valid, of course.

We explain the modified randomization now. One starts by fixing once and for all the reference points $z^k_{\alpha}$ satisfying
$d(z^k_{\alpha}, z^k_{\beta}) \ge \delta^k$ for all $\alpha \ne \beta$ and $\min_{\alpha} d(x,z^k_{\alpha}) < \delta^k$. We also fix one
relation $\le$ related to these points. We say that $(k, \alpha)$ and $(k, \beta)$ conflict if $d(z^{k+1}_{\gamma}, z^{k+1}_{\sigma}) < \delta^k/4$
for some $(k+1, \gamma) \le (k, \alpha)$ and $(k+1, \sigma) \le (k, \beta)$. Let $I(k, \alpha)$ be the set of pairs $(k, \beta)$ conflicting with $(k, \alpha)$.
Note that $\#I(k, \alpha) \lesssim 1$ as $X$ is geometrically doubling. We now earmark the points $z^k_{\alpha}$ (or the indices $(k, \alpha)$). To this end, fix some $L > \max_{(k, \alpha)} \#I(k, \alpha)$. Let $k \in \Z$ be given.
We inductively tag $z^k_{\alpha}$ by associating it with the smallest number $i \in \{1, \ldots, L\}$ having the feature that no $(k,\beta) \in I(k, \alpha)$ that has already been tagged
is associated with this number (recall that $\alpha$ always varies only over some countable set).

We now associate to each $(k, \alpha)$ a new point $x^k_{\alpha}$ in a random way. First one randomly chooses $i \in \{1, \ldots, L\}$ (uniform distribution, of course).
If $(k, \alpha)$ happens to be earmarked with the number $i$, we set $x^k_{\alpha} = z^{k+1}_{\beta}$ for some $(k+1, \beta) \le (k, \alpha)$, and the choice is made using uniform probability
(there are only boundedly many indices $(k+1, \beta) \le (k, \alpha)$). If $(k, \alpha)$ is not tagged with the number $i$, we set $x^k_{\alpha} = z^{k+1}_{\beta}$ for
some $(k+1, \beta)$ for which it holds that $d(z^k_{\alpha}, z^{k+1}_{\beta}) < \delta^{k+1}$ (there is always at least one such point available by construction). To summarize, for $i$-tagged indices we
randomly choose any $z^{k+1}_{\beta}$ for which $(k+1, \beta) \le (k, \alpha)$ and for the rest we choose some special $z^{k+1}_{\beta}$ which is particularly close to $z^k_{\alpha}$.
This is done independently on all levels $k \in \Z$. The idea of using this tagging as a way to avoid the removal procedure used in \cite{HM} is by T. Hytönen and A. Kairema.

The result is some new set of points $x^k_{\alpha}$, which readily qualify as new dyadic points (that is, $d(x^k_{\alpha}, x^k_{\beta}) \ge \delta^k/8$ for all $\alpha \ne \beta$
and $\min_{\alpha} d(x,x^k_{\alpha}) < 4\delta^k$ (with some better constants even)). This is an easy consequence of the construction, and we omit the details. Also evident is the fact
that $\mathbb{P}(z^{k+1}_{\beta} = x^k_{\alpha}) \ge \pi_0 > 0$ for some absolute constant $\pi_0$ if $(k+1, \beta) \le (k, \alpha)$ (this needed an extra argument with the randomization used in \cite{HM}).
Now the same proof as in \cite[Lemma 10.1]{HM} also gives us the same result with this modified randomization. That is, we have:
\begin{lem}\label{boundarylemma}
For any fixed $x \in X$ and $k \in \Z$, it holds
\begin{displaymath}
\mathbb{P}(x \in \delta_{Q^k_{\alpha}} \textrm{ for some }\alpha) \lesssim \epsilon^{\eta}
\end{displaymath}
for some $\eta > 0$. Here $\delta_{Q^k_{\alpha}} = \{x: d(x,Q^k_{\alpha}) \le \epsilon \ell(Q^k_{\alpha}) \textrm{ and } d(x, X \setminus Q^k_{\alpha}) \le \epsilon \ell(Q^k_{\alpha})\}$.
\end{lem}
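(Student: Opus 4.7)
The plan is to follow the blueprint of \cite[Lemma 10.1]{HM} and verify that it survives the new randomization. Fix $x \in X$ and $k \in \Z$. Since the level-$k$ cubes partition $X$, the point $x$ belongs to a unique $Q^k_{\alpha(x)}$, and the event $\{x \in \delta_{Q^k_{\alpha}} \text{ for some } \alpha\}$ is equivalent, up to a bounded-multiplicity loss coming from geometric doubling, to the existence of a point $y$ with $d(x,y) \le \epsilon\delta^k$ but $y \notin Q^k_{\alpha(x)}$.

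I would then translate this nearby-separation condition into a statement about the random centers. By the half-open cube construction, two close points $x$ and $y$ sit in distinct level-$k$ cubes precisely when, at some intermediate scale $j \ge k$, the $\le_{\mathcal{D}}$-chains through $x$ and $y$ branch apart. Geometrically, such a branching at level $j$ forces $x$ to be approximately equidistant from two competing candidate centers at that level, with a distance gap of order at most $\epsilon\delta^k$. Thus the bad event is contained in a union over levels $j \ge k$ of slab events, each asserting that some relevant randomized center $x^j_{\gamma}$ lies in a thin region near the perpendicular bisector of two competing centers.

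The probability of each slab event is controlled using the randomization. Each $x^j_{\gamma}$ is chosen uniformly from a bounded set of candidates $z^{j+1}_{\sigma}$, and the lower bound $\mathbb{P}(z^{j+1}_{\sigma} = x^j_{\gamma}) \ge \pi_0 > 0$ recorded above ensures that the distribution is genuinely spread. The proportion of candidates that land in the bad slab is controlled by the ratio of the slab width (of order $\epsilon\delta^k$) to the intrinsic candidate spacing at the relevant scale, while geometric doubling bounds the total number of candidates involved. Using independence of the randomization across levels and summing the level-$j$ contributions (which decay geometrically once $j$ is large enough past $k$) yields the desired bound $\epsilon^{\eta}$ for some $\eta > 0$ depending only on the geometric doubling constant and on $\pi_0$.

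The main obstacle is the geometric reduction in the second paragraph: one must trace the half-open cube construction carefully enough to confirm that a near-boundary occurrence at scale $\epsilon\delta^k$ really forces a slab event of transverse width $\lesssim \epsilon\delta^k$ at some specific intermediate level $j$, without hidden losses arising from the transitive closure of $\le_{\mathcal{D}}$ or from the distinction between the cube $Q^k_{\alpha(x)}$ containing $x$ and a potentially different cube $Q^k_{\alpha}$ in whose boundary layer $x$ might also sit. Once this bookkeeping is settled, the probabilistic estimate is essentially formal, and the modified earmarking procedure enters only through the already verified lower bound $\pi_0$, which is precisely why the proof of \cite[Lemma 10.1]{HM} can be reused verbatim.
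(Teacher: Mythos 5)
Your opening reduction is fine: since the level-$k$ cubes partition $X$, the event in question is just $d(x, X\setminus Q^k_{\alpha(x)})\le\epsilon\delta^k$ for the unique cube containing $x$. But the probabilistic core of your argument does not match the randomization at hand and, as stated, fails. The new centre $x^j_\gamma$ is chosen from boundedly many reference points $z^{j+1}_\sigma$ (a mixture of a uniform choice and a deterministic nearby choice, not a uniform one), and each admissible candidate carries probability at least $\pi_0\sim 1$. Consequently the probability that the chosen centre lands in a prescribed ``slab'' of width $\epsilon\delta^k$ is never of the order width/spacing: it is either $0$ (no candidate in the slab) or at least $\pi_0$ (some candidate in the slab). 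A union bound over levels of such slab events therefore cannot produce any factor $\epsilon^\eta$. Moreover, even taken at face value, the ratio $\epsilon\delta^k/\delta^{j+1}$ you invoke grows geometrically in $j$ (the spacing shrinks while the slab width stays $\epsilon\delta^k$), so your claimed geometric decay of the level-$j$ contributions is backwards and the sum over $j\ge k$ diverges.

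The argument of \cite[Lemma 10.1]{HM}, which the paper reuses verbatim with the modified randomization, is dual to yours: the bad event forces a constant-probability event to fail at roughly $\log_{1/\delta}(1/\epsilon)$ independent levels. Concretely, for each $j\ge k$ choose a reference point $z^{j+1}_\beta$ with $d(x,z^{j+1}_\beta)<\delta^{j+1}$; with probability at least $\pi_0$ the centre of its level-$j$ parent satisfies $x^j_\gamma=z^{j+1}_\beta$, and then $x\in B(x^j_\gamma,C_1\delta^j)\subset Q^j_\gamma\subset Q^k_{\alpha(x)}$ yields $d(x,X\setminus Q^k_{\alpha(x)})\ge (C_1-\delta)\delta^j$. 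Hence if $x$ lies within $\epsilon\delta^k$ of the boundary region at level $k$, this event must fail at every level $j$ with $k\le j$ and $(C_1-\delta)\delta^j>\epsilon\delta^k$; by independence of the choices across levels the probability is at most $(1-\pi_0)^N$ with $N\gtrsim\log_{1/\delta}(1/\epsilon)$, which is $\lesssim\epsilon^\eta$ with $\eta>0$ depending on $\pi_0$ and $\delta$. So the small power comes from a product of constant failure probabilities over many scales, not from per-scale smallness; the lower bound $\pi_0$ that you cite is precisely the input for this complementary, multiplicative argument, and your union-bound framework does not use it in a way that can close the proof.
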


We shall now modify the notion of goodness. Here we are given two dyadic systems of cubes $\mathcal{D} = \{Q^k_{\alpha}\}$ and $\mathcal{D}' = \{R^k_{\alpha}\}$ as always.
This amounts to randomly producing two sets of new dyadic points $(x^k_{\alpha})$ and $(y^k_{\alpha})$ using the above procedure and then choosing (following certain established rules but somewhat arbitrarily)
some relations $\le_{\mathcal{D}}$ and $\le_{\mathcal{D}'}$ related to the systems $(x^k_{\alpha})$ and $(y^k_{\alpha})$ respectively. Indeed, this information generates the families of cubes
$\mathcal{D} = \{Q^k_{\alpha}\}$ and $\mathcal{D}' = \{R^k_{\alpha}\}$. Set
\begin{displaymath}
\gamma := \frac{\alpha}{2(\alpha + d)},
\end{displaymath}
where we recall that $d := \log_2 C_{\lambda}$ in our setting.
\begin{defn}
We say that $Q^k_{\alpha} \in \mathcal{D}$ is geometrically $\mathcal{D}'$-bad, if there exists $(k-s, \beta) \ne (k-s, \gamma)$ for some $s \ge r$ so that
for some $(k-1, \eta) \le_{\mathcal{D}'} (k-s, \beta)$ and $(k-1, \xi) \le_{\mathcal{D}'} (k-s, \gamma)$ we have
$d(x^k_{\alpha}, y^{k-1}_{\eta}) \le \delta^{\gamma k}\delta^{(1-\gamma)(k-s)}$ and $d(x^k_{\alpha}, y^{k-1}_{\xi}) \le \delta^{\gamma k}\delta^{(1-\gamma)(k-s)}$. Otherwise
$Q^k_{\alpha}$ is geometrically $\mathcal{D}'$-good.
\end{defn}
Here the new feature is that with this definition the badness of a cube $Q^k_{\alpha}$ depends only on the centers of generations $\ell < k$ of the other system.
Let us then explain why this is still pretty close to the definition given in \cite{HM}. Note that $\delta^k = \delta^{(1-\gamma)s} \cdot \delta^{\gamma k}\delta^{(1-\gamma)(k-s)}$ and
$\delta^{(1-\gamma)s} \le \delta^{(1-\gamma)r} < 10^{-5}$ (as $r$ is fixed to be big enough). Suppose $Q^k_{\alpha}$ is good and $s \ge r$. We have that $x^k_{\alpha} \in R^{k-1}_{\eta} \subset
R^{k-s}_{\beta}$ for some unique $(k-1, \eta) \le_{\mathcal{D}'} (k-s, \beta)$. Now $d(x^k_{\alpha}, y^{k-1}_{\eta}) < 10\delta^{k-1} = 10^4 \delta^k < \delta^{\gamma k}\delta^{(1-\gamma)(k-s)}$.
Suppose (aiming for a contradiction) that we would have $d(x^k_{\alpha}, X \setminus R^{k-s}_{\beta}) < (3/4)\delta^{\gamma k}\delta^{(1-\gamma)(k-s)}$. Then we would have
for some $z \in X \setminus R^{k-s}_{\beta}$ that $d(x^k_{\alpha}, z) \le (3/4)\delta^{\gamma k}\delta^{(1-\gamma)(k-s)}$. But then $z \in R^{k-1}_{\xi} \subset R^{k-s}_{\gamma}$
for some $(k-1, \xi) \le_{\mathcal{D}'} (k-s, \gamma) \ne (k-s, \beta)$, and
\begin{displaymath}
d(x^k_{\alpha}, y^{k-1}_{\xi}) \le d(x^k_{\alpha}, z) + d(z, y^{k-1}_{\xi}) \le [3/4 + 10^{-1}]\delta^{\gamma k}\delta^{(1-\gamma)(k-s)} < \delta^{\gamma k}\delta^{(1-\gamma)(k-s)}
\end{displaymath}
contradicting the goodness of $Q^k_{\alpha}$. So we must have
\begin{align*}
d(Q^k_{\alpha}, X \setminus R^{k-s}_{\beta}) &\ge d(x^k_{\alpha}, X \setminus R^{k-s}_{\beta}) - 10\delta^k \\
&\ge [3/4 - 10^{-4}]\delta^{\gamma k}\delta^{(1-\gamma)(k-s)} \ge 2^{-1}\delta^{\gamma k}\delta^{(1-\gamma)(k-s)}.
\end{align*}
Thus also $d(Q^k_{\alpha}, R^{k-s}_{\gamma}) \ge 2^{-1}\delta^{\gamma k}\delta^{(1-\gamma)(k-s)}$ for every $\gamma \ne \beta$. We record these easy observations as a lemma.
\begin{lem}
If $Q \in \mathcal{D}$ is geometrically $\mathcal{D}'$-good, then for every $R \in \mathcal{D}'$ for which $\ell(Q) \le \delta^r\ell(R)$ we have
either $d(Q, R) \gtrsim \ell(Q)^{\gamma}\ell(R)^{1-\gamma}$ or $d(Q, X \setminus R) \gtrsim \ell(Q)^{\gamma}\ell(R)^{1-\gamma}$.
\end{lem}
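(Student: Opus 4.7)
\medskip
\noindent\textbf{Proof proposal.} The plan is to translate the hypothesis $\ell(Q) \le \delta^r \ell(R)$ into $Q = Q^k_{\alpha}$, $R = R^{k-s}_{\beta}$ with $s \ge r$, and then split into two cases according to whether the center $x^k_{\alpha}$ lies in $R$ or not. In the first case, the argument displayed immediately before the lemma already produces $d(Q, X \setminus R) \ge 2^{-1}\delta^{\gamma k}\delta^{(1-\gamma)(k-s)} = 2^{-1} \ell(Q)^{\gamma} \ell(R)^{1-\gamma}$, so the second alternative in the lemma holds. Hence it only remains to treat the symmetric case $x^k_{\alpha} \notin R$, where we aim for the first alternative $d(Q,R) \gtrsim \ell(Q)^{\gamma}\ell(R)^{1-\gamma}$.

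Suppose then $x^k_{\alpha} \notin R^{k-s}_{\beta}$. Since the cubes of generation $k-s$ in $\mathcal{D}'$ partition $X$, there is a unique $\gamma' \ne \beta$ with $x^k_{\alpha} \in R^{k-s}_{\gamma'}$, and further a unique $(k-1,\xi) \le_{\mathcal{D}'} (k-s,\gamma')$ with $x^k_{\alpha} \in R^{k-1}_{\xi}$. Exactly as in the pre-lemma computation, I would argue by contradiction: if $d(x^k_{\alpha}, R^{k-s}_{\beta}) < (3/4)\delta^{\gamma k}\delta^{(1-\gamma)(k-s)}$, pick $z \in R^{k-s}_{\beta}$ realizing this and let $(k-1,\eta) \le_{\mathcal{D}'}(k-s,\beta)$ be the unique index with $z \in R^{k-1}_{\eta}$. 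Using $d(z, y^{k-1}_{\eta}) \le C_0 \delta^{k-1}$ and $d(x^k_{\alpha}, y^{k-1}_{\xi}) \le C_0 \delta^{k-1}$, together with the key numerical estimate
\begin{displaymath}
C_0 \delta^{k-1} = C_0 \delta^{-1} \delta^{(1-\gamma)s}\, \delta^{\gamma k}\delta^{(1-\gamma)(k-s)} \le 10^{-1}\, \delta^{\gamma k}\delta^{(1-\gamma)(k-s)}
\end{displaymath}
(valid because $\delta^{(1-\gamma)s} \le \delta^{(1-\gamma)r} < 10^{-5}$ when $r$ is fixed large enough), the triangle inequality gives both $d(x^k_{\alpha}, y^{k-1}_{\eta}) < \delta^{\gamma k}\delta^{(1-\gamma)(k-s)}$ and $d(x^k_{\alpha}, y^{k-1}_{\xi}) < \delta^{\gamma k}\delta^{(1-\gamma)(k-s)}$. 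Since $(k-s,\beta) \ne (k-s,\gamma')$ and $s \ge r$, this matches exactly the defining configuration of geometric $\mathcal{D}'$-badness, contradicting the goodness of $Q^k_{\alpha}$.

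Hence $d(x^k_{\alpha}, R) \ge (3/4)\delta^{\gamma k}\delta^{(1-\gamma)(k-s)}$, and passing from the center to the cube via $\operatorname{diam}(Q) \le C_0 \delta^k$ yields
\begin{displaymath}
d(Q, R) \ge d(x^k_{\alpha}, R) - C_0\delta^k \ge [3/4 - 10^{-4}]\delta^{\gamma k}\delta^{(1-\gamma)(k-s)} \ge 2^{-1}\ell(Q)^{\gamma}\ell(R)^{1-\gamma},
\end{displaymath}
which is the first alternative. Combining the two cases finishes the proof.

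\medskip
\noindent\textbf{Main obstacle.} Essentially all the geometry has already been carried out in the paragraph preceding the lemma; the only genuinely new step is recognizing the symmetric role of $R^{k-s}_{\beta}$ vs.\ the cube $R^{k-s}_{\gamma'}$ containing $x^k_{\alpha}$, which simply swaps the roles of $\eta$ and $\xi$ in that computation. The one place care is needed is the absorption of the additive $C_0 \delta^{k-1}$ (and $C_0 \delta^k$) errors into a small fraction of $\delta^{\gamma k}\delta^{(1-\gamma)(k-s)}$; this is precisely where the parameter $r$ gets fixed large enough that $\delta^{(1-\gamma)r}$ is negligible, and also where $\gamma = \alpha/(2(\alpha+d)) \in (0,1)$ enters (ensuring the gap $\delta^{(1-\gamma)s}$ actually shrinks as $s \to \infty$).
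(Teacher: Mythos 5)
Your proposal is correct and takes essentially the same route as the paper: the whole content is the pre-lemma contradiction argument (two descendant centers of distinct generation-$(k-s)$ cubes both within $\delta^{\gamma k}\delta^{(1-\gamma)(k-s)}$ of $x^k_{\alpha}$ would make $Q$ bad), plus absorbing the $C_0\delta^{k-1}$ and $C_0\delta^k$ errors using $\delta^{(1-\gamma)r}<10^{-5}$. The only difference is that your second case is not needed as a separate contradiction argument: the paper notes that every cube $R^{k-s}_{\gamma}$ with $\gamma\ne\beta$ is contained in $X\setminus R^{k-s}_{\beta}$ (the cube containing $x^k_{\alpha}$), so $d(Q,R^{k-s}_{\gamma})\ge d(Q,X\setminus R^{k-s}_{\beta})\gtrsim \ell(Q)^{\gamma}\ell(R)^{1-\gamma}$ follows immediately from the first case.
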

If $Q^k_{\alpha}$ is bad, then the definition demands that for some $s \ge r$ we have that $x^k_{\alpha} \in R^{k-s} \in \mathcal{D}'$ so that $d(x^k_{\alpha}, X \setminus R^{k-s}) \le \delta^{\gamma k}\delta^{(1-\gamma)(k-s)}
= \delta^{\gamma s} \delta^{k-s} = \delta^{\gamma s} \ell(R^{k-s})$. Lemma \ref{boundarylemma} with $\epsilon = \delta^{\gamma s}$ then yields that
\begin{displaymath}
\mathbb{P}(Q^k_{\alpha} \textrm{ is geometrically }\mathcal{D}'\textrm{-bad}) \lesssim \sum_{s=r}^{\infty} (\delta^{\gamma \eta})^s \lesssim \delta^{r\gamma \eta}.
\end{displaymath}
We have proved the following.
\begin{lem}\label{badarerare}
For a fixed $Q \in \mathcal{D}$ we have under the random choice of the $\mathcal{D}'$-grid that
\begin{displaymath}
\mathbb{P}(Q \textrm{ is geometrically }\mathcal{D}'\textrm{-bad}) \lesssim \delta^{r\gamma \eta}.
\end{displaymath}
\end{lem}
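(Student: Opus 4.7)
The plan is to parametrize the bad event by the scale parameter $s \ge r$ appearing in the definition, control each scale separately using the boundary Lemma \ref{boundarylemma}, and then sum the resulting geometric series. Because the randomizations of the two grids $\mathcal{D}$ and $\mathcal{D}'$ are independent, I would first condition on the $\mathcal{D}$-grid so that $Q = Q^k_\alpha$ and its center $x^k_\alpha$ become deterministic data, and only $\mathcal{D}'$ remains random.

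Next I would translate badness at a fixed scale $s$ into a boundary-type condition on the fixed point $x^k_\alpha$ with respect to the $\mathcal{D}'$-grid. This is exactly the observation carried out just before the statement of the lemma, but read in the contrapositive: if $x^k_\alpha$ were buried deep inside its $\mathcal{D}'$-cube $R^{k-s} \in \mathcal{D}'$ in the sense that
\[
d(x^k_\alpha, X \setminus R^{k-s}) > \delta^{\gamma k}\delta^{(1-\gamma)(k-s)},
\]
then any sibling cube outside $R^{k-s}$ would be too distant to host points $y^{k-1}_\eta, y^{k-1}_\xi$ satisfying the defining inequalities of badness. Hence the bad event at scale $s$ forces $x^k_\alpha$ into the boundary set $\delta_{R^{k-s}}$ of Lemma \ref{boundarylemma}, with parameter $\epsilon = \delta^{\gamma s}$, via the identity $\delta^{\gamma k}\delta^{(1-\gamma)(k-s)} = \delta^{\gamma s}\ell(R^{k-s})$.

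Applying Lemma \ref{boundarylemma} to the fixed point $x^k_\alpha$ and the random $\mathcal{D}'$-grid at generation $k-s$ bounds the probability of this event by $(\delta^{\gamma s})^\eta = \delta^{\gamma\eta s}$, with $\eta > 0$ the exponent furnished by that lemma. A union bound over $s \ge r$ then yields
\[
\mathbb{P}(Q \text{ is geometrically }\mathcal{D}'\text{-bad}) \lesssim \sum_{s=r}^{\infty} \delta^{\gamma\eta s} \lesssim \delta^{r\gamma\eta},
\]
the final step being a geometric summation, valid since $\delta = 1/1000 < 1$. The one genuinely delicate point is the reduction in the middle paragraph: one must verify that the single proximity-to-boundary parameter $\epsilon = \delta^{\gamma s}$ actually captures all choices of the auxiliary indices $(k-1,\eta)$ and $(k-1,\xi)$ appearing in the definition of badness. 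This reduction has already been carried out in the computation immediately preceding the lemma statement, so once that step is in hand, the probabilistic estimate and the summation are essentially automatic.
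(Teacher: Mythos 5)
Your proposal is correct and takes essentially the same route as the paper: badness at scale $s$ forces the fixed center $x^k_{\alpha}$ into the boundary region $\delta_{R^{k-s}}$ of the $\mathcal{D}'$-cube containing it with $\epsilon = \delta^{\gamma s}$ (using $\delta^{\gamma k}\delta^{(1-\gamma)(k-s)} = \delta^{\gamma s}\ell(R^{k-s})$), Lemma \ref{boundarylemma} then bounds each such event by $\delta^{\gamma \eta s}$, and the union bound over $s \ge r$ sums the geometric series to $\lesssim \delta^{r\gamma\eta}$. This matches the paper's argument, including the use of the independence of the two randomizations to treat $x^k_{\alpha}$ as fixed.
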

We still need to achieve the effect that $\mathbb{P}(Q \textrm{ is good})$ would not depend on the particular choice of the cube $Q$ (in $\R^n$ this followed from symmetry, see \cite{Hy2}).
There seems to be no obvious reason why this should be the case already,
so we will force this by understanding goodness in a stronger sense: a cube is good if it is geometrically good and pseudogood -- a notion to be defined.

Define $\pi_{x^k_{\alpha}} = \mathbb{P}(Q^k_{\alpha} \textrm{ is geometrically }\mathcal{D}'\textrm{-good})$. Note that under the random choice of the other grid $\mathcal{D}'$, this really depends only on the center $x^k_{\alpha}$ of
$Q^k_{\alpha}$. Set $\pi_{\textrm{good}} = 1 - C\delta^{r\gamma \eta}$ so that always $\pi_{x^k_{\alpha}} \ge \pi_{\textrm{good}}$. Set $Z(t^k_{\alpha}, x^k_{\alpha}) = 1$, if $0 \le t^k_{\alpha} \le \pi_{\textrm{good}}/\pi_{x^k_{\alpha}}$, and
$Z(t^k_{\alpha}, x^k_{\alpha}) = 0$, if $1 \ge t^k_{\alpha} > \pi_{\textrm{good}}/\pi_{x^k_{\alpha}}$. Now $\mathbb{P}(Z(t^k_{\alpha}, x^k_{\alpha}) = 1\,|\, x^k_{\alpha}) = \pi_{\textrm{good}}/\pi_{x^k_{\alpha}}$ using the Lebesgue
measure on the interval $[0,1]$. We say that $Q^k_{\alpha}$ is pseudogood if $Z(t^k_{\alpha}, x^k_{\alpha}) = 1$, and $\mathcal{D}'$-good if it is geometrically $\mathcal{D}'$-good and pseudogood. If one considers the grid
$\mathcal{D}$ to be fixed, then under the random choice of the pseudogoodness parameters and the grid $\mathcal{D}'$, we have by independence that
$\mathbb{P}(Q^k_{\alpha} \textrm{ is } \mathcal{D}'\textrm{-good}) = \pi_{\textrm{good}}$ for every $Q^k_{\alpha} \in \mathcal{D}$. We use analogous random variables $W(u^k_{\alpha}, y^k_{\alpha})$ to determine
the pseudogoodness status of a cube in the grid $\mathcal{D}'$, and then the $\mathcal{D}$-goodness is also similarly defined.

Basically all these modification were done to prove the following analogue of \cite[Lemma 5.2]{Hy2} with our randomized systems of metric dyadic cubes. This enables us to later establish that
a certain paraproduct is bounded following the strategy used in \cite{Hy2}.

First a few comments. In the following chapter we shall introduce two fixed functions $f$ and $g$, and their martingale difference decompositions using Haar functions.
The aim is then to control a certain average (\ref{normest}). The details of this are not important for the next lemma, except for the fact that looking at that particular sum one sees that it is enough to sum over some fixed finite
index set $(k, \alpha)$ (because the functions have bounded support, the space is geometrically doubling, and cubes of only finitely many generations are needed). Thus, we assume that such is the case in the next lemma also.
This enables us to move $\mathbb{E}$ in and out the summation freely (see the proof). Also, $\varphi(Q,R)$ is an $L^1$-function of cubes $Q$ and $R$ and their children -- basically in the only application of this lemma we take
$\varphi(Q,R) = \langle g, \psi_R \rangle \langle b_2, T(b_1\varphi_Q)\rangle\langle \psi_R\rangle_Q \langle \varphi_Q, f \rangle$ (see the chapters 5 and 8).
\begin{lem}\label{goodtonogood}
We have that
\begin{displaymath}
(1 - C\delta^{r\gamma \eta})\mathbb{E} \sum_{R \in \mathcal{D}'} \mathop{\sum_{Q \in \mathcal{D}_{\textrm{good}}}}_{\delta^{k_0} < \ell(Q) \le \ell(R)} \varphi(Q,R)
= \mathbb{E} \sum_{R \in \mathcal{D}_{\textrm{good}}'} \mathop{\sum_{Q \in \mathcal{D}_{\textrm{good}}}}_{\delta^{k_0} < \ell(Q) \le \ell(R)} \varphi(Q,R),
\end{displaymath}
where the grid $\mathcal{D}'$ is fixed (so a set of points $(y^k_{\alpha})$ is fixed) and we average over every other random quantity $((x^k_{\alpha})$, $(t^k_{\alpha})$, $(u^k_{\alpha}))$.
\end{lem}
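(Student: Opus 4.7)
The plan is to move the expectation inside the (finite) $R$-sum, and to prove for each fixed $R \in \mathcal{D}'$ the pointwise identity
\begin{equation*}
  \mathbb{E}\bigl[\mathbf{1}_{\{R\textrm{ good}\}}\, X_R\bigr] = \pi_{\textrm{good}}\,\mathbb{E}[X_R],
\end{equation*}
where $X_R := \sum_{Q \in \mathcal{D}_{\textrm{good}},\, \delta^{k_0} < \ell(Q) \le \ell(R)} \varphi(Q,R)$ is the inner sum and $\pi_{\textrm{good}} = 1-C\delta^{r\gamma\eta}$. Summing over $R$ then yields the lemma. I would factor the goodness indicator as $\mathbf{1}_{\{R\textrm{ good}\}} = \mathbf{1}_{\{R\textrm{ pg}\}}\,\mathbf{1}_{\{R\textrm{ gg}\}}$ (pseudogood times geometrically good), and deduce the identity from the factorization $\mathbb{E}[\mathbf{1}_{\{R\textrm{ pg}\}}\mathbf{1}_{\{R\textrm{ gg}\}} X_R] = \mathbb{E}[\mathbf{1}_{\{R\textrm{ pg}\}}]\,\mathbb{E}[\mathbf{1}_{\{R\textrm{ gg}\}}]\,\mathbb{E}[X_R]$, which rests on two independences.

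The first independence is immediate from the construction of pseudogoodness: since $y^{k_R}_{\alpha_R}$ lies in the fixed grid $\mathcal{D}'$, the event $\{R\textrm{ pg}\}$ is measurable with respect to the single random coordinate $u^{k_R}_{\alpha_R}$, and this coordinate is drawn independently of all $(x^k_\alpha)$, $(t^k_\alpha)$ and every other $u$-coordinate. Its expectation equals $\pi_{\textrm{good}}/\pi_{y^{k_R}_{\alpha_R}}$ by the design of the random variable $W$.

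The second and more delicate independence is the reason for the three preparatory modifications. By the revised definition of geometric goodness, $\mathbf{1}_{\{R\textrm{ gg}\}}$ depends only on the $x$-points of generations $\ell < k_R$. In $X_R$ the cubes $Q$ have $k_Q \ge k_R$; by the cube-construction tweak (active since $k_Q \le k_0$) each such $Q$ depends as a half-open set only on $(x^\ell_\beta)_{\ell \ge k_Q}$, the geometric $\mathcal{D}'$-goodness of $Q$ depends only on the fixed $(y^\ell_\beta)$, and the pseudogoodness of $Q$ adds only a dependence on the independent coordinate $t^{k_Q}_{\alpha_Q}$. Thus $X_R$ is measurable with respect to $(x^\ell_\beta)_{\ell \ge k_R}$ and the $t$-sequence, independently of the $(x^\ell_\beta)_{\ell < k_R}$ that control $\mathbf{1}_{\{R\textrm{ gg}\}}$; the required independence then follows from the level-wise independence of the random construction, and one has $\mathbb{E}[\mathbf{1}_{\{R\textrm{ gg}\}}] = \pi_{y^{k_R}_{\alpha_R}}$.

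Multiplying the two expectations produces the factor $(\pi_{\textrm{good}}/\pi_{y^{k_R}_{\alpha_R}})\cdot \pi_{y^{k_R}_{\alpha_R}} = \pi_{\textrm{good}}$ in front of $\mathbb{E}[X_R]$, yielding the per-$R$ identity; summing over $R$ and relabelling the restricted $R$-sum on the left as a sum over $\mathcal{D}'_{\textrm{good}}$ concludes. The main obstacle is the second independence, which is essentially a careful audit of which random coordinate each factor actually feels; it is exactly this audit that the three modifications (new notion of geometric goodness, cube-construction tweak below level $k_0$, and the artificial pseudogoodness layer) are designed to make possible.
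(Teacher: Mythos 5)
Your argument is correct and is essentially the paper's own proof: the same independence audit (goodness of $R$ depends only on $(x^\ell_\beta)_{\ell < k_R}$ and $u^{k_R}_{\alpha_R}$, while the good-$Q$ inner sum with $\delta^{k_0} < \ell(Q) \le \ell(R)$ depends only on $(x^\ell_\beta)_{\ell \ge k_R}$ and the $t$-coordinates, thanks to the new goodness definition and the $k_0$-tweak), combined with $\mathbb{E}[\chi_{\textrm{good}}(R)] = \pi_{\textrm{good}}$ and finiteness of the sums. The only cosmetic difference is that you factor the goodness of $R$ explicitly into its pseudogood and geometrically good parts to recover $\pi_{\textrm{good}}$, whereas the paper uses this fact as already established and factorizes termwise in $(Q,R)$ rather than per fixed $R$.
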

\begin{proof}
We start by recalling the dependencies (remember that the points $(y^m_{\gamma})$ are fixed). The goodness of a cube $R^m_{\gamma} \in \mathcal{D}'$ depends on the points
$x^k_{\alpha}$ for which $k < m$ and on $u^m_{\gamma}$. The goodness of a cube $Q^k_{\alpha} \in \mathcal{D}$ depends on $x^k_{\alpha}$ and $t^k_{\alpha}$. As sets,
$Q^k_{\alpha}$ and its children depend on the centers $x^{\ell}_{\beta}$ for which $\ell \ge k$ (and this is because of the restriction $\delta^k = \ell(Q^k_{\alpha}) > \delta^{k_0}$ which says $k < k_0$). 

Note that $\pi_{\textrm{good}} = \mathbb{P}(R \in \mathcal{D}'_{\textrm{good}}) = \mathbb{E}(\chi_{\textrm{good}}(R))$ for every $R \in \mathcal{D}'$.
Thus, we have
\begin{align*}
\pi_{\textrm{good}} \mathbb{E} \sum_{R \in \mathcal{D}'} \mathop{\sum_{Q \in \mathcal{D}_{\textrm{good}}}}_{\delta^{k_0} < \ell(Q) \le \ell(R)} \varphi(Q,R) &= \pi_{\textrm{good}}
\mathbb{E} \sum_{(m,\gamma)} \mathop{\sum_{(k, \alpha)}}_{m \le k < k_0} \chi_{\textrm{good}}(Q^k_{\alpha}) \varphi(Q^k_{\alpha},R^m_{\gamma}) \\
&= \sum_{(m,\gamma)} \mathop{\sum_{(k, \alpha)}}_{m \le k < k_0} \mathbb{E}(\chi_{\textrm{good}}(R^m_{\gamma})) \mathbb{E}(\chi_{\textrm{good}}(Q^k_{\alpha})\varphi(Q^k_{\alpha},R^m_{\gamma})) \\
&= \sum_{(m,\gamma)} \mathop{\sum_{(k, \alpha)}}_{m \le k < k_0} \mathbb{E}(\chi_{\textrm{good}}(R^m_{\gamma})\chi_{\textrm{good}}(Q^k_{\alpha})\varphi(Q^k_{\alpha},R^m_{\gamma})) \\
&= \mathbb{E} \sum_{R \in \mathcal{D}'_{\textrm{good}}} \mathop{\sum_{Q \in \mathcal{D}_{\textrm{good}}}}_{\delta^{k_0} < \ell(Q) \le \ell(R)} \varphi(Q,R).
\end{align*}

Let us still spell out the details of the above computation (since it is actually surprisingly subtle and depends on all of the modifications made above).
We first removed everything that is random from the summations.
Then we moved the expectation inside the summation (the sum is finite by assumption), and after that we also moved the constant $\pi_{\textrm{good}} = 1 - C\delta^{r\gamma \eta}$ inside the summation
noting then that it equals $\mathbb{E}(\chi_{\textrm{good}}(R^m_{\gamma}))$ with any $(m, \gamma)$. Next we used the product rule of expectations of independent quantities: the random variable
$\chi_{\textrm{good}}(R^m_{\gamma})$ depends on $x^{\ell}_{\beta}$ for $\ell < m$ and on $u^m_{\gamma}$, and the random variable
$\chi_{\textrm{good}}(Q^k_{\alpha})\varphi(Q^k_{\alpha},R^m_{\gamma})$ depends on $x^{\ell}_{\beta}$ for $\ell \ge k \ge m$ and on $t^k_{\alpha}$. Recall also that the points
of different generations are independently chosen. Finally we moved the expectation out and rewrote the summation so that it again contains the random quantities.
\end{proof}
\section{Martingale difference decomposition, Haar functions and the tangent martingale trick}
Let us be given some system of cubes $\{Q^k_{\alpha}\}$ and some accretive function $b$. We set
\begin{align*}
E_k^bf &= \sum_{\alpha} \langle f \rangle_{Q^k_{\alpha}}\langle b \rangle_{Q^k_{\alpha}}^{-1} \chi_{Q^k_{\alpha}}b, \\
E_{Q^k_{\alpha}}^bf &= \chi_{Q^k_{\alpha}}E_k^b f, \\
\Delta_k^bf &= E_{k+1}^bf - E_k^bf, \\
\Delta_{Q^k_{\alpha}}^bf & = \chi_{Q^k_{\alpha}}\Delta_k^b f.
\end{align*}

Consider some cube $Q$. It has subcubes of the next generation $Q_i$, $i=1, \ldots, s(Q)$, where $s(Q) \lesssim 1$.
We set $\hat{Q}_k = \bigcup_{i=k}^{s(Q)} Q_i$, and note that we can always arrange the indexation of the subcubes to be such that $|b(\hat Q_k)| \gtrsim \mu(Q)$ for
every $k=1, \ldots, s(Q)$. Indeed, we can index so that (here $a$ is the accretivity constant of $b$)
\begin{displaymath}
|b(\hat Q_k)| \ge \Big(1-\frac{k-1}{s(Q)}\Big)a\mu(Q) \gtrsim \mu(Q),
\end{displaymath}
and this can proven as \cite[Lemma  4.3]{Hy2}. Note also that trivially $|b(\hat Q_k)| \lesssim \mu(Q)$ (so $|b(\hat Q_k)| \sim \mu(Q)$) and $|b(Q_i)| \sim \mu(Q_i)$.

Now define
\begin{displaymath}
\Delta^b_{Q,u}f = E^b_{Q_u}f + E^b_{\hat{Q}_{u+1}}f - E^b_{\hat{Q}_u}f
\end{displaymath}
also noting that
\begin{displaymath}
\Delta^b_Q f = \sum_{u=1}^{s(Q)-1} \Delta^b_{Q,u}f.
\end{displaymath}
A computation shows that
\begin{displaymath}
\Delta^b_{Q,u}f = b\varphi^b_{Q,u}\langle \varphi^b_{Q,u}, f\rangle,
\end{displaymath}
where we have the adapted Haar functions
\begin{displaymath}
\varphi^b_{Q,u} = \Big( \frac{b(Q_u)b(\hat{Q}_{u+1})}{b(\hat{Q}_u)}\Big)^{1/2} \Big( \frac{\chi_{Q_u}}{b(Q_u)} - \frac{\chi_{\hat{Q}_{u+1}}}{b(\hat{Q}_{u+1})}\Big)
\end{displaymath}
as in \cite{Hy2}. Here we have to interpret $\varphi^b_{Q,u} = 0$ if $\mu(Q_u) = 0$. We also have the non-cancellative adapted Haar function
\begin{displaymath}
\varphi^b_{Q,0}f = b(Q)^{-1/2}\chi_Q
\end{displaymath}
using which we write $E^b_Qf = b\varphi^b_{Q,0}\langle \varphi^b_{Q,0},f\rangle$.

We record the key properties (the last two being only important special cases)
\begin{displaymath}
\int b\varphi^b_{Q,u} \,d\mu = 0,
\end{displaymath}
\begin{displaymath}
|\varphi^b_{Q,u}| \sim \mu(Q_u)^{1/2}\Big(\frac{\chi_{Q_u}}{\mu(Q_u)} + \frac{\chi_{\hat Q_{u+1}}}{\mu(Q)}\Big),
\end{displaymath}
\begin{displaymath}
\|\varphi^b_{Q,u}\|_{L^p(X)} \sim \mu(Q_u)^{1/p-1/2}
\end{displaymath}
and
\begin{displaymath}
\|\varphi^b_{Q,u}\|_{L^1(X)}\|\varphi^b_{Q,u}\|_{L^{\infty}(X)} \sim 1.
\end{displaymath}

Given a dyadic system $\mathcal{D} = \{Q\}$ we can write with any $m$ that
\begin{align*}
f &= \mathop{\sum_{Q \in \mathcal{D}}}_{\ell(Q) \le \delta^m} \Delta_Q^{b_1}f +  \mathop{\sum_{Q \in \mathcal{D}}}_{\ell(Q) = \delta^m} E_Q^{b_1}f \\
&= \mathop{\sum_{Q \in \mathcal{D}}}_{\ell(Q) \le \delta^m} \sum_u b_1\varphi^{b_1}_{Q,u}\langle \varphi^{b_1}_{Q,u}, f\rangle,
\end{align*}
where the $u$ summation runs through $1, \ldots, s(Q)-1$ if $\ell(Q) < \delta^m$, and through $0, 1, \ldots, s(Q)-1$ if $\ell(Q) = \delta^m$. The unconditional convergence
of this in $L^p(X,Y)$ is not at all clear, but it nevertheless follows as in \cite[Proposition 4.1]{Hy2} (note that in that proof certain abstract paraproducts are used, but their
theory is formulated in chapter 3 of \cite{Hy2} in an abstract filtered space which directly applies also in our situation).

Basically the strategy we shall use is the usual one: write the same decomposition for a function $g \in L^{p'}(X,Y^*)$ just using some other grid $\mathcal{D}' = \{R\}$ and the other test function $b_2$, and then decompose
the pairing $\langle g, Tf \rangle$ accordingly. However, Lemma \ref{goodtonogood} has the restriction involving $k_0$ (which we have not yet fixed) and so we somehow need to get into a situation where we do not need to consider
arbitrarily small cubes.

We start by choosing two boundedly supported functions $f \in L^p(X,Y)$ and $g \in L^{p'}(X, Y^*)$ so that $f/b_1$ and $g/b_2$ are Lipschitz, $\|f\|_{L^p(X,Y)} = \|g\|_{L^{p'}(X,Y^*)} = 1$ and $\|T\| \le 2|\langle g, Tf\rangle|$.
Here, of course, $\|T\| =\|T\|_{L^p(X,Y) \to L^p(X,Y)}$. For the fact that Lipschitz functions are dense, see e.g. the proof of \cite[Proposition 3.4]{Hy1}. We now also fix $m$ so that the supports of the functions $f$ and $g$
are contained in some balls $B(x_0, \delta^m)$ and $B(x_1, \delta^m)$ respectively.

Using any two dyadic systems $\mathcal{D}$ and $\mathcal{D}'$ we decompose
\begin{displaymath}
\langle g, Tf \rangle = \langle g - E^{b_2}_{k_0}g, Tf \rangle + \langle E^{b_2}_{k_0}g, T(f - E^{b_1}_{k_0}f)\rangle + \langle E^{b_2}_{k_0}g, T(E^{b_1}_{k_0}f)\rangle,
\end{displaymath}
and then estimate
\begin{align*}
|\langle g, Tf \rangle| \le \|T\|& \|g - E^{b_2}_{k_0}g\|_{L^{p'}(X, Y^*)} \|f\|_{L^p(X,Y)} \\ 
&+ \|T\| \| E^{b_2}_{k_0}g \|_{L^{p'}(X, Y^*)} \| f - E^{b_1}_{k_0}f \|_{L^p(X,Y)} + |\langle E^{b_2}_{k_0}g, T(E^{b_1}_{k_0}f)\rangle|.
\end{align*}
Note that $\| E^{b_2}_{k_0}g \|_{L^{p'}(X, Y^*)} \lesssim \| g \|_{L^{p'}(X, Y^*)} = 1$ so that we get
\begin{displaymath}
|\langle g, Tf \rangle| \le (C(b_2)\| f - E^{b_1}_{k_0}f \|_{L^p(X,Y)}  + \|g - E^{b_2}_{k_0}g\|_{L^{p'}(X, Y^*)})\|T\| + |\langle E^{b_2}_{k_0}g, T(E^{b_1}_{k_0}f)\rangle|.
\end{displaymath}

Next we employ the facts that $f/b_1$ and $g/b_2$ are Lipschitz (with a constant $L$ say). Let $h = f/b_1$. Let $x \in X$ and then let $Q$ denote the unique $\mathcal{D}$-cube of generation $k_0$ containing $x$.
We have that
\begin{align*}
\| E^{b_1}_{k_0} f(x) - f(x)\|_Y &\lesssim \|\langle b_1 \rangle_Q h(x) - \langle b_1 h\rangle_Q \|_Y \\
&\le\frac{1}{\mu(Q)} \int_Q |b_1(z)|\,\|h(z)-h(x)\|_Y\,d\mu(z) \\
&\lesssim Ld(Q) \lesssim L\delta^{k_0}.
\end{align*}
Noting that $\bigcup\{Q:\, Q \in \mathcal{D}_{k_0}, \, Q \cap B(x_0, \delta^m) \ne \emptyset\} \subset B(x_0, 2\delta^m)$ we have that
\begin{displaymath}
\| f - E^{b_1}_{k_0}f \|_{L^p(X,Y)} \lesssim L\lambda(x_0, \delta^m)^{1/p}\delta^{k_0}.
\end{displaymath}
A similar estimate holds for $\|g - E^{b_2}_{k_0}g\|_{L^{p'}(X, Y^*)}$. We fix $k_0$ to be so large that we have
\begin{displaymath}
\|T\|/2 \le |\langle g, Tf\rangle| \le \|T\|/4 + |\langle E^{b_2}_{k_0}g, T(E^{b_1}_{k_0}f)\rangle|,
\end{displaymath}
that is, $\|T\| \le 4|\langle E^{b_2}_{k_0}g, T(E^{b_1}_{k_0}f)\rangle|$ with any grids $\mathcal{D}$ and $\mathcal{D}'$ (but only with these particular fixed functions $f$ and $g$, of course).

Now we write $\langle E^{b_2}_{k_0}g, T(E^{b_1}_{k_0}f)\rangle$ as the following sum
\begin{align*}
&\Big \langle \mathop{\sum_{R \in \mathcal{D}_{\textrm{bad}}'}}_{\delta^{k_0} < \ell(R) \le \delta^m} \Delta^{b_2}_Rg + \mathop{\sum_{R \in \mathcal{D}_{\textrm{bad}}'}}_{\ell(R) = \delta^m} E^{b_2}_Rg, T(E^{b_1}_{k_0}f)\Big\rangle \\
&+\Big \langle \mathop{\sum_{R \in \mathcal{D}_{\textrm{good}}'}}_{\delta^{k_0} < \ell(R) \le \delta^m} \Delta^{b_2}_Rg + \mathop{\sum_{R \in \mathcal{D}_{\textrm{good}}'}}_{\ell(R) = \delta^m} E^{b_2}_Rg, 
T\Big(\mathop{\sum_{Q \in \mathcal{D}_{\textrm{bad}}}}_{\delta^{k_0} < \ell(Q) \le \delta^m} \Delta^{b_1}_Qf + \mathop{\sum_{Q \in \mathcal{D}_{\textrm{bad}}}}_{\ell(Q) = \delta^m} E^{b_1}_Qf\Big) \Big\rangle \\
&+\mathop{\sum_{Q \in \mathcal{D}_{\textrm{good}},\, R \in \mathcal{D}_{\textrm{good}}'}}_{\delta^{k_0} < \ell(Q), \, \ell(R) \le \delta^m} \sum_{u,v} \langle \varphi^{b_2}_{R,v}, g\rangle
\langle b_2\varphi^{b_2}_{R,v}, T(b_1\varphi^{b_1}_{Q,u})\rangle \langle \varphi^{b_1}_{Q,u} f\rangle,
\end{align*}
where the $u$ summation runs through $1, \ldots, s(Q)-1$ if $\ell(Q) < \delta^m$, and through $0, 1, \ldots, s(Q)-1$ if $\ell(Q) = \delta^m$, and similarly for the $v$ summation.
We thus have that $\|T\|/4$ is bounded by the sum of the following terms
\begin{displaymath}
\|T\| \Big\| \mathop{\sum_{R \in \mathcal{D}_{\textrm{bad}}'}}_{\delta^{k_0} < \ell(R) \le \delta^m} \Delta^{b_2}_Rg + \mathop{\sum_{R \in \mathcal{D}_{\textrm{bad}}'}}_{\ell(R) = \delta^m} E^{b_2}_Rg\Big \|_{L^{p'}(X,Y^*)}
\|E^{b_1}_{k_0}f\|_{L^p(X,Y)},
\end{displaymath}
\begin{displaymath}
\|T\| \Big\| \mathop{\sum_{R \in \mathcal{D}_{\textrm{good}}'}}_{\delta^{k_0} < \ell(R) \le \delta^m} \Delta^{b_2}_Rg + \mathop{\sum_{R \in \mathcal{D}_{\textrm{good}}'}}_{\ell(R) = \delta^m} E^{b_2}_Rg \Big \|_{L^{p'}(X,Y^*)}
\Big\|\mathop{\sum_{Q \in \mathcal{D}_{\textrm{bad}}}}_{\delta^{k_0} < \ell(Q) \le \delta^m} \Delta^{b_1}_Qf + \mathop{\sum_{Q \in \mathcal{D}_{\textrm{bad}}}}_{\ell(Q) = \delta^m} E^{b_1}_Qf \Big\|_{L^p(X,Y)}
\end{displaymath}
and
\begin{displaymath}
\Big|\mathop{\sum_{Q \in \mathcal{D}_{\textrm{good}},\, R \in \mathcal{D}_{\textrm{good}}'}}_{\delta^{k_0} < \ell(Q), \, \ell(R) \le \delta^m} \sum_{u,v} \langle \varphi^{b_2}_{R,v}, g\rangle
\langle b_2\varphi^{b_2}_{R,v}, T(b_1\varphi^{b_1}_{Q,u})\rangle \langle \varphi^{b_1}_{Q,u} f\rangle\Big|.
\end{displaymath}

Note that clearly $\|E^{b_1}_{k_0}f\|_{L^p(X,Y)} \lesssim \|f\|_{L^p(X,Y)} = 1$ and
\begin{displaymath}
\Big\| \mathop{\sum_{R \in \mathcal{D}_{\textrm{good}}'}}_{\ell(R) = \delta^m} E^{b_2}_Rg \Big\|_{L^{p'}(X,Y^*)} \lesssim \|g\|_{L^{p'}(X,Y^*)} = 1.
\end{displaymath}
Also, using unconditionality and the contraction principle, we have that
\begin{displaymath}
\Big\| \mathop{\sum_{R \in \mathcal{D}_{\textrm{good}}'}}_{\delta^{k_0} < \ell(R) \le \delta^m} \Delta^{b_2}_Rg \Big\|_{L^{p'}(X,Y^*)} \lesssim \|g\|_{L^{p'}(X,Y^*)} = 1.
\end{displaymath}

Thus, the terms involving bad cubes are dominated by
\begin{align*}
\|T\| \Bigg[ &\Big\| \mathop{\sum_{R \in \mathcal{D}_{\textrm{bad}}'}}_{\delta^{k_0} < \ell(R) \le \delta^m} \Delta^{b_2}_Rg + \mathop{\sum_{R \in \mathcal{D}_{\textrm{bad}}'}}_{\ell(R) = \delta^m} E^{b_2}_Rg\Big \|_{L^{p'}(X,Y^*)}\\
&+ \Big\|\mathop{\sum_{Q \in \mathcal{D}_{\textrm{bad}}}}_{\delta^{k_0} < \ell(Q) \le \delta^m} \Delta^{b_1}_Qf + \mathop{\sum_{Q \in \mathcal{D}_{\textrm{bad}}}}_{\ell(Q) = \delta^m} E^{b_1}_Qf \Big\|_{L^p(X,Y)}\Bigg].
\end{align*}
Taking expectations over all the random quantities in the randomization of cubes, it is easy to see that
\begin{displaymath}
\mathbb{E} \Big\|  \mathop{\sum_{R \in \mathcal{D}_{\textrm{bad}}'}}_{\ell(R) = \delta^m} E^{b_2}_Rg\Big \|_{L^{p'}(X,Y^*)} + \mathbb{E} \Big\| \mathop{\sum_{Q \in \mathcal{D}_{\textrm{bad}}}}_{\ell(Q) = \delta^m} E^{b_1}_Qf \Big\|_{L^p(X,Y)}
\lesssim \eta(r),
\end{displaymath}
where $\eta(r) \to 0$ when $r \to \infty$.
Working similarly as later in chapter 9 (when estimating a certain term $E_1$) we have that
\begin{displaymath}
\mathbb{E} \Big\| \mathop{\sum_{R \in \mathcal{D}_{\textrm{bad}}'}}_{\delta^{k_0} < \ell(R) \le \delta^m} \Delta^{b_2}_Rg \Big\|_{L^{p'}(X,Y^*)} +
\mathbb{E} \Big\|\mathop{\sum_{Q \in \mathcal{D}_{\textrm{bad}}}}_{\delta^{k_0} < \ell(Q) \le \delta^m} \Delta^{b_1}_Qf \Big\|_{L^p(X,Y)} \lesssim \eta(r)
\end{displaymath}
as well. The proof requires a certain improvement of the contraction principle which will also be recalled in chapter 9. One can consult \cite[chapter 12]{Hy2} too.

Choosing $r$ large enough we thus have that
\begin{equation}\label{normest}
\|T\|/8 \le \mathbb{E} \Big|\mathop{\sum_{Q \in \mathcal{D}_{\textrm{good}},\, R \in \mathcal{D}_{\textrm{good}}'}}_{\delta^{k_0} < \ell(Q), \, \ell(R) \le \delta^m} \sum_{u,v} \langle \varphi^{b_2}_{R,v}, g\rangle
\langle b_2\varphi^{b_2}_{R,v}, T(b_1\varphi^{b_1}_{Q,u})\rangle \langle \varphi^{b_1}_{Q,u} f\rangle\Big|.
\end{equation}
We almost always suppress the finite summation over $u,v$ and after that is done, simply write $\varphi_Q = \varphi^{b_1}_{Q,u}$, $\psi_R = \varphi^{b_2}_{R,v}$ and
$T_{RQ} = \langle b_2\psi_R, T(b_1\varphi_Q) \rangle$. The summation condition $\delta^{k_0} < \ell(Q), \, \ell(R) \le \delta^m$ is always in force, and thus most of the time not explicitly written.
The estimation of this series involving good cubes only is now split into multiple subseries to be considered in the subsequent chapters. We primarily deal with the part $\ell(Q) \le \ell(R)$ the other being symmetric.
Although we have $\|f\|_{L^p(X,Y)} = \|g\|_{L^{p'}(X,Y^*)} = 1$, in some of the estimates below we explicitly write $\|f\|_{L^p(X,Y)}$ and $\|g\|_{L^{p'}(X,Y^*)}$ in place of $1$ for clarity.

We still comment on some of the techniques used on the following chapters. Related to this vector-valued $L^p$-theory we combine basic randomization tricks with the more sophisticated tool
called the tangent martingale trick in \cite{Hy2}. Let us now formulate this since it is of fundamental importance to us.
\begin{prop}
Let $\mathcal{A} = \bigcup_k \mathcal{A}_k$, where $\mathcal{A}_k$ is a countable partition of $X$ into Borel sets of finite $\mu$-measure, and $\sigma(\mathcal{A}_k) \subset \sigma(\mathcal{A}_{k+1})$.
For each $A \in \mathcal{A}$ we are given a function $f_A\colon X \to Y$ supported on $A$, and
so that $f_A$ is $\sigma(\mathcal{A}_{k+1})$-measurable whenever $A \in \mathcal{A}_k$.
For each $A \in \mathcal{A}$ we are also given a jointly measurable function $k_A\colon A \times A \to \C$, which is pointwise bounded by $1$. We have
\begin{align*}
\int_{\Omega \times X}  \Big\| \sum_{k \in \Z}& \epsilon_k \sum_{A \in \mathcal{A}_k} \frac{\chi_A(x)}{\mu(A)} \int_A k_A(x,z)f_A(z) \,d\mu(z)\Big\|_Y^p\, d\mathbb{P}(\epsilon) \,d\mu(x) \\
&\lesssim \int_{\Omega \times X} \Big\| \sum_{k \in \Z} \epsilon_k \sum_{A \in \mathcal{A}_k} f_A(x)\Big\|_Y^p \,d\mathbb{P}(\epsilon)\,d\mu(x).
\end{align*}
\end{prop}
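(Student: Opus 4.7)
The plan is to represent each normalized integral as an expectation over an auxiliary uniform random variable, and then to invoke a UMD-valued decoupling inequality of tangent-martingale type. For each $A \in \mathcal{A}$ introduce an independent random variable $Z_A$ distributed as $\mu|_A/\mu(A)$, so that
\begin{displaymath}
\frac{1}{\mu(A)}\int_A k_A(x,z) f_A(z) \, d\mu(z) = \mathbb{E}_Z \bigl[ k_A(x, Z_A) f_A(Z_A) \bigr].
\end{displaymath}
Substituting this into the left-hand side, moving $\mathbb{E}_Z$ out of $\|\cdot\|_Y^p$ by Jensen's inequality, and applying Kahane's contraction principle for each fixed realization of $(x,Z)$ (noting that $|k_A(x,Z_A)| \le 1$) to discard the scalar kernel factor, one reduces the task to showing
\begin{displaymath}
\int_{\Omega \times X} \mathbb{E}_Z \Bigl\| \sum_k \epsilon_k f_{A_k(x)}(Z_{A_k(x)}) \Bigr\|_Y^p \, d\mathbb{P}(\epsilon) \, d\mu(x) \lesssim \int_{\Omega \times X} \Bigl\| \sum_k \epsilon_k f_{A_k(x)}(x) \Bigr\|_Y^p \, d\mathbb{P}(\epsilon) \, d\mu(x),
\end{displaymath}
where $A_k(x)$ denotes the unique atom of $\mathcal{A}_k$ containing $x$.

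This reduction is the decoupling step and constitutes the heart of the proof. Relative to the filtration $\mathcal{F}_k := \sigma(\mathcal{A}_k)$ on the base space, the sequence $d_k(x) := f_{A_k(x)}(x)$ is $\mathcal{F}_{k+1}$-measurable thanks to the hypothesis that $f_A$ is $\sigma(\mathcal{A}_{k+1})$-measurable for $A \in \mathcal{A}_k$, whereas the auxiliary sequence $\tilde d_k(x,Z) := f_{A_k(x)}(Z_{A_k(x)})$ is conditionally independent across $k$ given the $\mathcal{A}$-randomness, by the independence of the $Z_A$. Moreover, on the event $\{A_k(\cdot) = A\}$ both $d_k$ and $\tilde d_k$ are distributed as $f_A$ evaluated at a $\mu|_A/\mu(A)$-uniform point (for $d_k$ because $x$ itself ranges uniformly over $A$ with that normalized measure, and for $\tilde d_k$ by construction of $Z_A$). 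Thus $(\tilde d_k)$ is a CI-tangent sequence to $(d_k)$ in the standard sense of decoupling theory.

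With tangency and conditional independence secured, the desired inequality is an instance of the Rademacher-weighted decoupling estimate available in every UMD space. To close the argument I would follow the abstract framework of \cite[Chapter 12]{Hy2}, whose proof in the Euclidean context does not really exploit any structure beyond that of a UMD-valued sign-weighted tangent sequence. The main obstacle is precisely this final step: the preliminary randomization, Jensen, and contraction maneuvers are essentially formal, but the decoupling itself requires careful bookkeeping of the compound filtration generated by $(\mathcal{A}_k)$ together with the auxiliary $Z$-randomness, verification of the conditional-distribution identity in that enlarged probability space, and then invocation of the UMD property of $Y$ in the precise form of a Banach-valued decoupling inequality for sign-weighted tangent sequences.
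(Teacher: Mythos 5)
Your proposal is correct and is essentially the argument this paper relies on: the paper gives no proof of this proposition, deferring to \cite[Chapter 6]{Hy2}, where the proof is exactly your scheme --- representing the normalized averages by independent randomizations $Z_A$, applying Jensen and the contraction principle to discard the kernels $k_A$, and then invoking the known UMD-valued decoupling inequality for conditionally independent (sign-weighted) tangent sequences after verifying tangency with respect to the filtration $\sigma(\mathcal{A}_k)$. The only caveats are minor: the relevant part of \cite{Hy2} is Chapter 6 rather than Chapter 12, and since $\mu$ need not be finite one should first truncate to finitely many generations and localize to the atoms of the coarsest one so that the decoupling theorem is applied on a genuine probability space.
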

This is the only version of the trick we explicitly need in this paper. For this result and some more general theory related to this see \cite[chapter 6]{Hy2}. Lastly, we record the following randomization trick
which is used multiple times in the sequel. For the proof see \cite[p. 10]{Hy2}.
\begin{lem}\label{rantrick}
Suppose that for each $R \in \mathcal{D}'$ we are given a subcollection $\mathcal{D}(R) \subset \mathcal{D}$. There holds
\begin{align*}
\Big| &\sum_{R \in \mathcal{D}'} \langle g, \psi_R \rangle \sum_{Q \in \mathcal{D}(R)} T_{RQ} \langle \varphi_Q, f\rangle \Big| \\
&\lesssim \|g\|_{L^{p'}(X,Y^*)} \Big\| \sum_{k \in \Z} \epsilon_k \sum_{R \in \mathcal{D}'_k} \psi_R \sum_{Q \in \mathcal{D}(R)} T_{RQ} \langle \varphi_Q, f\rangle\Big\|_{L^p(\Omega \times X, Y)},
\end{align*}
where we have the measure $\mathbb{P} \times \mu$ on $\Omega \times X$ (here $(\Omega, \mathbb{P})$ is just some probability space).
\end{lem}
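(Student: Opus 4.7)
The plan is a two-step argument: pair-and-H\"older to peel off $g$, followed by UMD randomization on the resulting martingale-like series. Set
\[
S_R := \sum_{Q \in \mathcal{D}(R)} T_{RQ}\langle \varphi_Q, f\rangle \in Y,
\]
so that by linearity of the $Y^*$-$Y$ pairing,
\[
\sum_{R \in \mathcal{D}'} \langle g, \psi_R\rangle S_R = \Big\langle g,\ \sum_{R \in \mathcal{D}'} \psi_R S_R \Big\rangle.
\]
H\"older's inequality then immediately yields
\[
\Big|\sum_{R} \langle g, \psi_R\rangle S_R\Big| \le \|g\|_{L^{p'}(X,Y^*)}\, \Big\|\sum_{R \in \mathcal{D}'} \psi_R S_R\Big\|_{L^p(X,Y)}.
\]

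Next I would group the sum by generation: let $d_k := \sum_{R \in \mathcal{D}'_k} \psi_R S_R$ so that $\sum_R \psi_R S_R = \sum_{k \in \Z} d_k$. The crucial observation is that $(b_2 d_k)_k$ is a martingale difference sequence with respect to the filtration generated by $\mathcal{D}'_k$. Indeed, each $\psi_R = \varphi^{b_2}_{R,v}$ is supported in $R$ and satisfies the adapted cancellation $\int b_2 \psi_R\, d\mu = 0$, and since different cubes in $\mathcal{D}'_k$ have disjoint supports the sum $b_2 d_k$ has vanishing $b_2$-integral over every parent cube $R' \in \mathcal{D}'_{k-1}$. Because $b_2$ is accretive and bounded, $|b_2| \sim 1$ pointwise, so multiplication by $b_2$ (or its inverse) is an isomorphism of $L^p(X,Y)$, and in particular the norms appearing with or without random signs transfer between $d_k$ and $b_2 d_k$ up to constants.

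Finally I would invoke the UMD property of $Y$ in its standard two-sided form: for a martingale difference sequence $(b_2 d_k)_k$ in $L^p(X,Y)$,
\[
\Big\|\sum_k d_k\Big\|_{L^p(X,Y)} \sim \Big\|\sum_k b_2 d_k\Big\|_{L^p(X,Y)} \lesssim \Big\|\sum_k \epsilon_k b_2 d_k\Big\|_{L^p(\Omega \times X, Y)} \sim \Big\|\sum_k \epsilon_k d_k\Big\|_{L^p(\Omega \times X, Y)},
\]
where the one-sided UMD inequality applied with the constant sign sequence yields the first $\lesssim$ (and the reverse one-sided inequality would give a comparable bound, using that $(\epsilon_k d_k)$ is again a martingale difference sequence for any sign choice). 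Combining this with the H\"older estimate above gives the claimed bound.

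The main obstacle is the technical verification of the martingale-difference structure once the $b_2$-weight is present; this is where accretivity (giving the two-sided pointwise bounds on $b_2$) is essential, and also what allows this argument to work on the quasimetric space with adapted Haar functions exactly as in \cite[p.~10]{Hy2}.
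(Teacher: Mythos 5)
Your first step (pair and apply H\"older, reducing matters to $\|\sum_R \psi_R S_R\|_{L^p(X,Y)}$) is fine, but the second step has a genuine gap: neither $(d_k)_k$ nor $(b_2 d_k)_k$ is a martingale difference sequence. With respect to the dyadic filtration $\mathcal{F}_k=\sigma(\mathcal{D}'_{k+1})$ the functions $d_k=\sum_{R\in\mathcal{D}'_k}\psi_R S_R$ are adapted but do \emph{not} have vanishing conditional expectations (the adapted Haar functions satisfy $\int b_2\psi_R\,d\mu=0$, not $\int\psi_R\,d\mu=0$), while $b_2d_k$ has vanishing conditional expectation onto $\sigma(\mathcal{D}'_k)$ but is not $\sigma(\mathcal{D}'_{k+1})$-measurable, since $b_2$ is an arbitrary bounded accretive function and in particular not dyadically measurable; enlarging the filtration to make $b_2d_k$ adapted destroys the vanishing of the conditional expectations. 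Hence the two-sided UMD inequality cannot be invoked, and multiplying by $b_2^{\pm1}$ does not repair this (moreover, under the paper's weak, integral form of accretivity you do not even have $|b_2|\sim1$ pointwise). The estimate you actually need -- a lower bound, deterministic $\lesssim$ randomized, for an adapted Haar series with \emph{arbitrary} $Y$-valued coefficients $S_R$ -- is precisely the kind of statement that in \cite{Hy2} requires the abstract paraproduct/adapted-martingale machinery; the paper itself stresses that even the unconditionality of the adapted decomposition of a fixed function ``is not at all clear''.

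The argument the paper points to (\cite[p.~10]{Hy2}) is arranged so that the nontrivial UMD input falls on the $g$-side, where it is already available. Using the normalization $\int b_2\psi_R^2\,d\mu=1$, the disjointness of same-generation cubes and the independence of the signs, one has the identity
\[
\sum_{R\in\mathcal{D}'}\langle g,\psi_R\rangle\sum_{Q\in\mathcal{D}(R)}T_{RQ}\langle\varphi_Q,f\rangle
=\mathbb{E}_\epsilon\int_X\Big\langle \sum_{j\in\Z}\epsilon_j\sum_{R'\in\mathcal{D}'_j}b_2\psi_{R'}\langle\psi_{R'},g\rangle,\ \sum_{k\in\Z}\epsilon_k\sum_{R\in\mathcal{D}'_k}\psi_R\sum_{Q\in\mathcal{D}(R)}T_{RQ}\langle\varphi_Q,f\rangle\Big\rangle\,d\mu,
\]
then H\"older on $\Omega\times X$, and finally the bound $\big\|\sum_j\epsilon_j\sum_{R'}b_2\psi_{R'}\langle\psi_{R'},g\rangle\big\|_{L^{p'}(\Omega\times X,Y^*)}\lesssim\|g\|_{L^{p'}(X,Y^*)}$, i.e.\ the randomized unconditionality of the adapted martingale difference decomposition in the UMD space $Y^*$, which is established separately (via the abstract theory of \cite{Hy2}, as the paper notes). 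You should either reproduce this duality argument or supply a genuine proof of the arbitrary-coefficient lower randomization estimate you implicitly use; as written, the martingale-difference claim on which your final step rests is false.
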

\section{Separated cubes}
We consider the part of the series where $R \in \mathcal{D}_{\textrm{good}}'$, $Q \in \mathcal{D}_{\textrm{good}}$, $\ell(Q) \le \ell(R)$ and $d(Q,R) \ge CC_0\ell(Q)$. Also the adapted Haar functions $\varphi_Q$
related to the smaller cubes $Q$ are assumed to be cancellative.

We begin with some estimates for the matrix elements $T_{RQ} = \langle b_2\psi_R, T(b_1\varphi_Q)\rangle$ -- these follow, with some modifications, \cite[Lemma 6.1 and Lemma 6.2]{HM}.
\begin{lem}
Let $Q \in \mathcal{D}$ and $R \in \mathcal{D}'$ be such that $\ell(Q) \le \ell(R)$ and $d(Q,R) \ge CC_0\ell(Q)$. Assume also that $\varphi_Q$ is cancellative.
We have the estimate
\begin{displaymath}
|T_{RQ}| \lesssim \frac{\ell(Q)^{\alpha}}{d(Q,R)^{\alpha}\sup_{z \in Q} \lambda(z, d(Q,R))}\|\varphi_Q\|_{L^1(\mu)}\|\psi_R\|_{L^1(\mu)}.
\end{displaymath}
\end{lem}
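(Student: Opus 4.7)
The plan is the classical ``exploit cancellation, then use kernel smoothness'' argument, closely following the proof of \cite[Lemma 6.1]{HM}. Since $\varphi_Q$ is cancellative, $\int b_1\varphi_Q\,d\mu = 0$, so for any fixed $z_Q \in Q$ (take the center $x^k_\alpha$) I can write
\begin{displaymath}
T_{RQ} = \int b_2\psi_R(x) \int [K(x,y) - K(x,z_Q)]\,b_1\varphi_Q(y)\,d\mu(y)\,d\mu(x).
\end{displaymath}

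For $x$ in the support of $\psi_R$ and $y\in Q$, the separation $d(Q,R) \ge CC_0\ell(Q)$ together with $d(y, z_Q) \le C_0\ell(Q)$ ensures $d(x,y) \ge C\,d(y,z_Q)$, so the H\"older kernel estimate in the second variable applies:
\begin{displaymath}
|K(x,y) - K(x,z_Q)| \lesssim \frac{d(y,z_Q)^\alpha}{d(x,y)^\alpha\,\lambda(y, d(x,y))} \lesssim \frac{\ell(Q)^\alpha}{d(Q,R)^\alpha\,\lambda(y, d(Q,R))},
\end{displaymath}
using $d(x,y) \gtrsim d(Q,R)$ together with the monotonicity of $\lambda$ in the radius. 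Plugging back in, bounding $|b_1|,|b_2|$ by their $L^\infty$ norms (absorbed into $\lesssim$), and carrying out the $x$-integral yields
\begin{displaymath}
|T_{RQ}| \lesssim \frac{\ell(Q)^\alpha\,\|\psi_R\|_{L^1(\mu)}}{d(Q,R)^\alpha}\int_Q\frac{|\varphi_Q(y)|}{\lambda(y, d(Q,R))}\,d\mu(y).
\end{displaymath}

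The only remaining bookkeeping point is passing from the pointwise $\lambda(y,d(Q,R))$ to $\sup_{z\in Q}\lambda(z,d(Q,R))$ in the denominator. Since $\operatorname{diam}(Q) \le C_0\ell(Q) \ll d(Q,R)$, the non-decreasing property and the doubling of $\lambda$ in the radius allow me to compare $\lambda(y,d(Q,R))$ at different points $y \in Q$ up to a uniform constant; pulling the resulting supremum out of the integral leaves $\|\varphi_Q\|_{L^1(\mu)}$, and the claimed inequality follows. I do not anticipate any serious obstacle here, as everything is a direct adaptation of the analogous scalar estimate in \cite{HM}; the one mild point is verifying that the constants absorbed by $\lesssim$ depend only on the standard kernel data ($B$, $\alpha$, $C$) and the geometric constants ($C_0$, $C_\lambda$), and not on the particular pair of cubes.
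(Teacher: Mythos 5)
Your setup (using $\int b_1\varphi_Q\,d\mu=0$ to subtract $K(x,z_Q)$, then invoking the H\"older estimate in the second variable, justified by $d(x,y)\ge d(Q,R)\ge CC_0\ell(Q)\ge Cd(y,z_Q)$) matches the paper's argument, but your last ``bookkeeping point'' contains a genuine gap. You apply the smoothness estimate in the orientation that produces $\lambda(y,d(x,y))$ with $y$ the integration variable, and then claim that ``the non-decreasing property and the doubling of $\lambda$ in the radius'' let you compare $\lambda(y,d(Q,R))$ for different $y\in Q$, so as to replace it by $\sup_{z\in Q}\lambda(z,d(Q,R))$. No such comparison is available: the axioms of an upper doubling measure (monotonicity and doubling of $r\mapsto\lambda(x,r)$, and $\mu(B(x,r))\le\lambda(x,r)$) say nothing about how $\lambda(y,r)$ and $\lambda(z,r)$ relate for distinct points $y,z$, even when $d(y,z)\ll r$. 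For example, with $\mu$ Lebesgue measure on $\R^n$ one may take $\lambda(x,r)=c(x)r^n$ with $c(x)\ge 1$ arbitrary and wildly varying, which is a perfectly admissible dominating function; then $\lambda(y,r)/\lambda(z,r)$ is unbounded for nearby points. This is exactly why the size estimate for standard kernels in this framework is stated with $\min\big(\lambda(x,\rho(x,y))^{-1},\lambda(y,\rho(x,y))^{-1}\big)$. So your intermediate bound with $\lambda(y,d(Q,R))$ inside the $y$-integral does not imply the stated bound with the supremum over $z\in Q$.

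The fix is to exploit the arbitrariness of the subtracted point $z\in Q$ and apply the H\"older estimate with $z$ in the slot that produces $\lambda(z,\cdot)$ in the denominator: write
\begin{displaymath}
|K(x,y)-K(x,z)|=|K(x,z)-K(x,y)|\lesssim \frac{d(z,y)^{\alpha}}{d(x,z)^{\alpha}\lambda(z,d(x,z))}\le \frac{(C_0\ell(Q))^{\alpha}}{d(Q,R)^{\alpha}\lambda(z,d(Q,R))},
\end{displaymath}
which is legitimate since $d(x,z)\ge d(Q,R)\ge CC_0\ell(Q)\ge Cd(z,y)$ (this is the paper's parenthetical condition). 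Now the $\lambda$-factor is a constant independent of the integration variables, the $x$- and $y$-integrals give $\|\psi_R\|_{L^1(\mu)}\|\varphi_Q\|_{L^1(\mu)}$, and since $z\in Q$ was arbitrary you may take the infimum of the right-hand side over $z\in Q$, which is precisely the supremum $\sup_{z\in Q}\lambda(z,d(Q,R))$ in the denominator. With this reorientation your argument coincides with the paper's proof; as you wrote it, the final step would fail for general upper doubling measures.
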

\begin{proof}
Recalling that $\int b_1\varphi_Q \,d\mu = 0$, we have for an arbitrary $z \in Q$ that
\begin{displaymath}
T_{RQ} = \int_R \int_Q [K(x,y)-K(x,z)]b_1(y)\varphi_Q(y)b_2(x)\psi_R(x)\,d\mu(y)\,d\mu(x).
\end{displaymath}
The claim follows from the kernel estimates (which we may utilize since $d(x,z) \ge d(Q,R) \ge CC_0\ell(Q) \ge Cd(y,z)$).
\end{proof}
We set $D(Q,R) = \ell(Q) + \ell(R) + d(Q,R)$.
\begin{lem}\label{seplem}
Let $Q \in \mathcal{D}_{\textrm{good}}$ and $R \in \mathcal{D}'$ be such that $\ell(Q) \le \ell(R)$ and $d(Q,R) \ge CC_0\ell(Q)$. Assume also that $\varphi_Q$ is cancellative.
We have the estimate
\begin{displaymath}
|T_{RQ}| \lesssim \frac{\ell(Q)^{\alpha/2}\ell(R)^{\alpha/2}}{D(Q,R)^{\alpha}\sup_{z \in Q} \lambda(z, D(Q,R))}\|\varphi_Q\|_{L^1(\mu)}\|\psi_R\|_{L^1(\mu)}.
\end{displaymath}
\end{lem}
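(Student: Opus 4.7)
The plan is to combine the previous lemma's estimate with the geometric goodness of $Q$ and the upper doubling property of $\lambda$, exploiting the identity $\gamma(\alpha+d)=\alpha/2$ (which is built into the very definition of $\gamma$) to balance kernel decay against dimensional growth of $\lambda$ and symmetrize the resulting exponents in $\ell(Q)$ and $\ell(R)$.

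First I would dispose of the easy regimes. If $\ell(R)\le \delta^{-r}\ell(Q)$ then $\ell(Q)^{\alpha}\sim \ell(Q)^{\alpha/2}\ell(R)^{\alpha/2}$, and the assumption $d(Q,R)\ge CC_0\ell(Q)$ together with $\ell(Q)\sim \ell(R)$ gives $D(Q,R)\sim d(Q,R)$; by the doubling of $\lambda$ in the radial variable, also $\sup_{z\in Q}\lambda(z,D(Q,R))\sim \sup_{z\in Q}\lambda(z,d(Q,R))$, so the previous lemma already yields the claim. Similarly, if $d(Q,R)\ge \ell(R)$, then $D(Q,R)\sim d(Q,R)$ and $\ell(Q)\le\ell(R)$ gives $\ell(Q)^{\alpha}\le \ell(Q)^{\alpha/2}\ell(R)^{\alpha/2}$, so the desired estimate is weaker than the previous one.

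The only genuine case is therefore $\ell(Q)\le \delta^r\ell(R)$ together with $d(Q,R)<\ell(R)$, where $D(Q,R)\sim\ell(R)$. Here I would invoke the goodness lemma: since $d(Q,R)>0$ forces $Q\subset X\setminus R$, the alternative $d(Q,X\setminus R)\gtrsim \ell(Q)^{\gamma}\ell(R)^{1-\gamma}$ cannot hold, leaving only
\begin{displaymath}
d(Q,R)\gtrsim \ell(Q)^{\gamma}\ell(R)^{1-\gamma}.
\end{displaymath}
Now I would apply the upper doubling property, in the form $\lambda(z,\ell(R))\lesssim (\ell(R)/d(Q,R))^{d}\,\lambda(z,d(Q,R))$ with $d=\log_2 C_{\lambda}$, to rewrite the previous lemma's bound as
\begin{displaymath}
|T_{RQ}|\lesssim \frac{\ell(Q)^{\alpha}\ell(R)^{d}}{d(Q,R)^{\alpha+d}\,\sup_{z\in Q}\lambda(z,\ell(R))}\,\|\varphi_Q\|_{L^1(\mu)}\|\psi_R\|_{L^1(\mu)}.
\end{displaymath}
Raising the goodness inequality to the power $\alpha+d$ and using the defining identity $\gamma(\alpha+d)=\alpha/2$, one finds $d(Q,R)^{\alpha+d}\gtrsim \ell(Q)^{\alpha/2}\ell(R)^{\alpha/2+d}$; the factor $\ell(R)^{d}$ in the numerator then cancels, producing precisely $\ell(Q)^{\alpha/2}\ell(R)^{\alpha/2}/(\ell(R)^{\alpha}\,\sup_{z\in Q}\lambda(z,\ell(R)))$, which is the claimed bound with $D(Q,R)\sim\ell(R)$.

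The main obstacle is conceptual rather than computational: one has to recognise that $\gamma=\alpha/(2(\alpha+d))$ was chosen precisely so that the Hölder exponent from the kernel and the dimensional exponent $d$ conspire to give the symmetric $\alpha/2,\alpha/2$ split. Once that balance is spotted, the rest is routine bookkeeping with the doubling constant of $\lambda$ and with the comparability $D(Q,R)\sim \ell(R)$ in the non-trivial regime.
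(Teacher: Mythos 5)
Your proposal is correct and follows essentially the same route as the paper: dispose of the regimes $\ell(Q)>\delta^r\ell(R)$ and $d(Q,R)\ge\ell(R)$ by noting $d(Q,R)\gtrsim D(Q,R)$ there, and in the remaining case use the goodness bound $d(Q,R)\gtrsim\ell(Q)^{\gamma}\ell(R)^{1-\gamma}$ together with the doubling of $\lambda$ and the identity $\gamma(\alpha+d)=\alpha/2$ to pass from the previous lemma's estimate to the symmetric $\alpha/2,\alpha/2$ form with $D(Q,R)\sim\ell(R)$. The only difference is cosmetic bookkeeping (you raise the goodness inequality to the power $\alpha+d$, while the paper substitutes it into the doubling ratio), so the two arguments coincide in substance.
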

\begin{proof}
If $\ell(Q) > \delta^r\ell(R)$, then $d(Q,R) \gtrsim D(Q,R)$, and the claim follows from the previous lemma. In the case $d(Q,R) \ge \ell(R)$, we also have $d(Q,R) \gtrsim D(Q,R)$, and the claim again follows from
the previous lemma.

We may thus assume that $\ell(Q) \le \delta^r\ell(R)$ and $d(Q,R) \le \ell(R)$. As $Q$ is good, we have
$d(Q,R) \gtrsim \ell(Q)^{\gamma}\ell(R)^{1-\gamma}$. Consider an arbitrary $z \in Q$. Using the identity
\begin{displaymath}
C_{\lambda}^{-\gamma \log_2 \frac{\ell(R)}{\ell(Q)}} = \Big(\frac{\ell(R)}{\ell(Q)}\Big)^{-\gamma d}
\end{displaymath}
and the doubling property of $\lambda$ one gets that
\begin{displaymath}
\lambda(z, d(Q,R)) \gtrsim \Big(\frac{\ell(R)}{\ell(Q)}\Big)^{-\gamma d} \lambda(z, \ell(R)).
\end{displaymath}
The claim then follows from the previous lemma, the identity $\gamma d + \gamma \alpha = \alpha/2$, and the fact that in our situation $\ell(R) \gtrsim D(Q,R)$.
\end{proof}
Let us then state and prove the main result of this section -- this follows, save the technical modifications, \cite[p. 25-26]{Hy2}. 
\begin{prop}\label{sepcubesprop}
There holds
\begin{displaymath}
\Big| \sum_{R \in \mathcal{D}'_{\textrm{good}}} \mathop{\sum_{Q \in \mathcal{D}_{\textrm{good}}}}_{\ell(Q) \le \ell(R), \, d(Q,R) \ge CC_0\ell(Q)} \langle g, \psi_R\rangle T_{RQ} \langle \varphi_Q, f\rangle\Big| \lesssim
\|g\|_{L^{p'}(X, Y^*)}\|f\|_{L^p(X, Y)}
\end{displaymath}
with the additional interpretation that the adapted Haar functions $\varphi_Q$ related to the smaller cubes $Q$ are cancellative, even on the coarsest level $\ell(Q) = \delta^m$.
\end{prop}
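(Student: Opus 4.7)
The plan is to follow the strategy of \cite[p.~25--26]{Hy2}, with Lemma \ref{seplem} playing the role of the scalar Euclidean kernel estimate. First I would apply the randomization trick (Lemma \ref{rantrick}) to peel off $g$, reducing matters to bounding, by $\|f\|_{L^p(X,Y)}$, the quantity
\begin{equation*}
\Big\|\sum_{k\in\Z}\epsilon_k \sum_{\substack{R\in\mathcal{D}'_{\textrm{good}}\\ \ell(R)=\delta^k}} \psi_R \sum_{Q} T_{RQ}\langle\varphi_Q,f\rangle\Big\|_{L^p(\Omega\times X,Y)},
\end{equation*}
where the inner $Q$-sum runs over good $Q$ with $\ell(Q)\le\ell(R)$ and $d(Q,R)\ge CC_0\ell(Q)$.

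Next I would split the $Q$-sum according to the relative generation $j\ge 0$ with $\ell(Q)=\delta^{k+j}$. By Lemma \ref{seplem}, each such piece comes with a geometric factor $\delta^{\alpha j/2}$ to be summed at the end. For fixed $j$, the plan is to cast the inner double sum as an averaged kernel-integral action on $f$ and fit it into the tangent martingale trick (the unlabeled Proposition of Section 5) with partition $\mathcal{A}_k=\mathcal{D}'_k$. Using $|\psi_R|\lesssim \mu(\mathrm{supp}\,\psi_R)^{-1/2}\chi_{\mathrm{supp}\,\psi_R}$ together with the standard expressions for $\|\varphi_Q\|_{L^1}$ and $\|\psi_R\|_{L^1}$, the bound in Lemma \ref{seplem} is exactly what is needed so that, after extracting the scalar $\delta^{\alpha j/2}$, the residual kernel $k_A(x,z)$ is pointwise $\lesssim 1$; the $\lambda(z,D(Q,R))$-normalization and the identity $\gamma d+\gamma\alpha=\alpha/2$ from the proof of Lemma \ref{seplem} are what make the whole balance work out.

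Once fit into the tangent martingale trick, the right-hand side produced by that Proposition is the $L^p(\Omega\times X,Y)$-norm of $\sum_k\epsilon_k\sum_{A\in\mathcal{A}_k} f_A$, where $f_A$ is (essentially) the restriction to $A$ of $\sum_Q b_1\varphi_Q\langle\varphi_Q,f\rangle$. By UMD unconditionality applied to the $\mathcal{D}'$-martingale differences and by the $L^p$-convergence of the martingale difference decomposition of $f$ as recalled in Section 4 (see also \cite[Prop.~4.1]{Hy2}), this is controlled by $\|f\|_{L^p(X,Y)}$, uniformly in $j$. Summing the convergent geometric series $\sum_{j\ge 0}\delta^{\alpha j/2}$ then closes the estimate.

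The main obstacle I expect is the verification that the normalized kernels $k_A$ required by the tangent martingale trick are pointwise bounded by $1$ uniformly in both $j$ and in the geometry of the separated pairs $(Q,R)$: the subtle point is that $D(Q,R)$ may be much smaller than $\ell(R)$ while $\lambda$ grows only mildly, and only the combination of goodness of $Q$ and the identity $\gamma d+\gamma\alpha=\alpha/2$ can absorb this mismatch. Bookkeeping the partition structure so that the $R$- and $Q$-sums interlock with $\sigma(\mathcal{A}_k)$ is also delicate, but should transfer from \cite[Ch.~6]{Hy2} with essentially cosmetic modifications suited to our metric/upper-doubling setting.
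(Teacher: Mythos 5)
There is a genuine gap at the heart of your argument: the choice of partition $\mathcal{A}_k=\mathcal{D}'_k$ for the tangent martingale trick cannot work for separated cubes. The trick requires each kernel $k_A$ to be supported in $A\times A$, so that the inner integral only sees data on $A$; but here $d(Q,R)\ge CC_0\ell(Q)$, so the cubes $Q$ carrying $\varphi_Q$ (and the piece of $\Delta^{b_1}f$ you want to feed in as $f_A$) lie \emph{outside} $R$. The paper's proof resolves exactly this by decomposing not only in the relative size $m$ (your $j$) but also in the relative distance, $D(Q,R)/\ell(R)\sim\delta^{-j}$, and then replacing $R$ by a much coarser cube $S=\tilde Q_{k-j-\theta(j)}$ of the \emph{other} grid $\mathcal{D}$, with $\theta(j)\ge(j\gamma+r)(1-\gamma)^{-1}$, chosen so that the goodness of $R$ forces $R\subset S$; the partitions $\mathcal{D}_{k-j-\theta(j)}$ are what enter the tangent martingale trick, and the $k$-sum is split into $m+j+\theta(j)+1$ congruence classes to secure the measurability of $y\mapsto\chi_S(y)\Delta^{b_1}_{k+m}f(y)/b_1(y)$ with respect to the next partition level. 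None of this appears in your outline, and without the distance parameter you also have no mechanism to sum, for fixed relative size, over the unboundedly many far-away $Q$'s paired with a given $R$.

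Relatedly, your account of the delicate balance is off. Since $D(Q,R)=\ell(Q)+\ell(R)+d(Q,R)\ge\ell(R)$, the danger is that $D(Q,R)$ is much \emph{larger} than $\ell(R)$, not smaller; the case $d(Q,R)$ small relative to $\ell(R)$ is handled by goodness and the identity $\gamma d+\gamma\alpha=\alpha/2$, but that is already consumed inside Lemma \ref{seplem} and is not what makes the Proposition work. The actual balance in the Proposition's proof is between the gain $\delta^{\alpha j}$ from the kernel estimate and the loss $\delta^{-d\theta(j)}$ incurred when comparing $\mu(S)$ with $\lambda(\cdot,\delta^{k-j})$ to normalize $t_{RQ}$; the net exponent $(\alpha^2+\alpha d)/(\alpha+2d)>0$ is what renders the $j$-sum convergent, and the final bound carries an extra factor $(m+j+1)$ from the subseries splitting (so it is not uniform in $j$, only summable). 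With $\mathcal{A}_k=\mathcal{D}'_k$ and no containing cubes $S$, neither the support condition nor this quantitative bookkeeping can be carried out, so the proof as proposed does not go through.
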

\begin{proof}
We first consider the case
\begin{displaymath}
\left\{ \begin{array}{ll}
\ell(R) = \delta^k, & k \in \Z, \\
\ell(Q) = \delta^{k+m}, & m = 0, 1, 2, \ldots, \\
\delta^{k-j} < D(Q,R) \le \delta^{k-j-1}, & j= 0, 1, 2, \ldots.
\end{array} \right.
\end{displaymath}
The last requirement says that $D(Q,R)/\ell(R) \sim \delta^{-j}$. The estimate from the previous lemma gives
\begin{displaymath}
\frac{|T_{RQ}|}{\|\varphi_Q\|_{L^1(\mu)}\|\psi_R\|_{L^1(\mu)}} \lesssim \frac{\delta^{\alpha m/2}\delta^{\alpha j}}{\sup_{z \in Q} \lambda(z, \delta^{k-j})}.
\end{displaymath}
We suppress from our notation the requirement that $d(Q,R) \ge CC_0\ell(Q)$.
Lemma \ref{rantrick} gives
\begin{align*}
\Big| \sum_{k \in \Z} \sum_{R \in \mathcal{D}_{\textrm{good}, k}} &\mathop{\sum_{Q \in \mathcal{D}_{\textrm{good}, k+m}}}_{D(Q,R)/\ell(R) \sim \delta^{-j}} \langle g, \psi_R\rangle T_{RQ} \langle \varphi_Q, f\rangle\Big|  \\
&\lesssim \|g\|_{L^{p'}(X, Y^*)}\Big\| \sum_{k \in \Z} \epsilon_k \sum_{R \in \mathcal{D}_{\textrm{good}, k}} \mathop{\sum_{Q \in \mathcal{D}_{\textrm{good}, k+m}}}_{D(Q,R)/\ell(R) \sim \delta^{-j}} \psi_R T_{RQ} \langle \varphi_Q, f\rangle
\Big\|_{L^p(\Omega \times X, Y)}.
\end{align*}

For a cube $Q$ denote by $\tilde Q_{\ell}$ the unique cube of generation $\ell \le \textrm{gen}(Q)$ for which $Q \subset \tilde Q_{\ell}$. Let $\theta(j)$ denote the smallest integer for which
$\theta(j) \ge (j\gamma + r)(1-\gamma)^{-1}$. Recalling that $R$ is good and $r$ is large enough, we must have for any $Q$ and $R$ in the above summation that
$R \subset \tilde Q_{k-j-\theta(j)}$. Thus, we may write
\begin{displaymath}
\sum_{R \in \mathcal{D}'_{\textrm{good},k}} = \sum_{S \in \mathcal{D}_{k-j-\theta(j)}} \mathop{\sum_{R \in \mathcal{D}'_{\textrm{good},k}}}_{R \subset S}.
\end{displaymath}
Also, we have $\mu(S) \lesssim \inf_{w \in S} \lambda(w, \delta^{k-j-\theta(j)}) \lesssim \delta^{-d\theta(j)} \inf_{w \in S}  \lambda(w, \delta^{k-j})$. Define $t_{RQ}$ via the identity
\begin{displaymath}
T_{RQ} = \frac{\delta^{\alpha m/2}\delta^{\alpha j - d\theta(j)}}{\mu(S)} \|\varphi_Q\|_{L^1(\mu)}\|\psi_R\|_{L^1(\mu)}t_{RQ},
\end{displaymath}
and note that we have
\begin{displaymath}
|t_{RQ}| \lesssim \frac{\inf_{w \in S} \lambda(w, \delta^{k-j})}{\sup_{z \in Q} \lambda(z, \delta^{k-j})} \le 1.
\end{displaymath}
Also relevant is the estimate
\begin{displaymath}
\delta^{\alpha j - d\theta(j)} \lesssim \delta^{[\alpha-d\gamma(1-\gamma)^{-1}]j} = \delta^{(\alpha^2+\alpha d)(\alpha+2d)^{-1}j}.
\end{displaymath}

For every $S \in \mathcal{D}_{k-j-\theta(j)}$ we set
\begin{displaymath}
K_S(x,y) = \mathop{\sum_{R \in \mathcal{D}'_{\textrm{good},k}}}_{R \subset S} \mathop{\sum_{Q \in \mathcal{D}_{\textrm{good}, k+m}}}_{D(Q,R)/\ell(R) \sim \delta^{-j}} \psi_R(x)\|\psi_R\|_{L^1(\mu)}t_{RQ}\|\varphi_Q\|_{L^1(\mu)} \varphi_Q(y)b_1(y).
\end{displaymath}
As $\|\varphi_Q\|_{L^1(\mu)}\|\varphi_Q\|_{L^{\infty}(\mu)} \lesssim 1$, $\|\psi_R\|_{L^1(\mu)}\|\psi_R\|_{L^{\infty}(\mu)} \lesssim 1$, $\|b_1\|_{L^{\infty}(\mu)} \lesssim 1$, $|t_{RQ}| \lesssim 1$ and
for every fixed $x$ and $y$ there is at most one non-zero term in the double sum defining $K_S$, we have $|K_S(x,y)| \lesssim 1$. Also, $K_S$ is supported on $S \times S$ as spt$\,\psi_R \subset R \subset S$
and spt$\,\varphi_Q \subset Q \subset S$.

Using the fact that $\int b_1\varphi_Q\,d\mu = 0$ one notes that $\langle \varphi_Q, f\rangle = \langle \varphi_Q, \Delta^{b_1}_{k+m}f\rangle$ for $Q \in \mathcal{D}_{k+m}$. Using this and the definitions from above, we see that
\begin{align*}
\Big\| &\sum_{k \in \Z} \epsilon_k \sum_{R \in \mathcal{D}_{\textrm{good}, k}} \mathop{\sum_{Q \in \mathcal{D}_{\textrm{good}, k+m}}}_{D(Q,R)/\ell(R) \sim \delta^{-j}} \psi_R T_{RQ} \langle \varphi_Q, f\rangle\Big\|_{L^p(\Omega \times X, Y)} \\
&\lesssim \delta^{\frac{\alpha}{2}m}\delta^{\frac{\alpha^2+\alpha d}{\alpha+2d}j} \Big\| \sum_{k \in \Z} \epsilon_k \sum_{S \in \mathcal{D}_{k-j-\theta(j)}} \frac{\chi_S}{\mu(S)} \int_S K_S(\cdot, y)\frac{\chi_S(y)\Delta^{b_1}_{k+m}f(y)}{b_1(y)}\,d\mu(y)
\Big\|_{L^p(\Omega \times X, Y)}.
\end{align*}
Due to the measurability requirements of the tangent martingale trick we further split up the above sum over $k \in \Z$ into $m+j+\theta(j)+1 \lesssim m+j+1$ subseries:
\begin{displaymath}
\sum_{k \in \Z} = \sum_{k_0=0}^{m+j+\theta(j)} \mathop{\sum_{k \equiv k_0}}_{\textrm{mod}\,m+j+\theta(j)+1}.
\end{displaymath}
The point is that $y \mapsto \frac{\chi_S(y)\Delta^{b_1}_{k+m}f(y)}{b_1(y)}$ is constant on the subcubes of generation $k+m+1 = k' - j - \theta(j)$, where $k' = k + (m + j + \theta(j) +1)$.
Applying the tangent martingale trick to each of these subseries then yields that
\begin{align*}
\Big| &\sum_{k \in \Z} \sum_{R \in \mathcal{D}_{\textrm{good}, k}} \mathop{\sum_{Q \in \mathcal{D}_{\textrm{good}, k+m}}}_{D(Q,R)/\ell(R) \sim \delta^{-j}} \langle g, \psi_R\rangle T_{RQ} \langle \varphi_Q, f\rangle\Big| \\
&\lesssim \delta^{\frac{\alpha}{2}m}\delta^{\frac{\alpha^2+\alpha d}{\alpha+2d}j} \|g\|_{L^{p'}(X, Y^*)} \sum_{k_0=0}^{m+j+\theta(j)} 
\Big\| \sum_{k \equiv k_0} \epsilon_k \sum_{S \in \mathcal{D}_{k-j-\theta(j)}} \frac{\chi_S\Delta^{b_1}_{k+m}f}{b_1}\Big\|_{L^p(\Omega \times X, Y)} \\
&\lesssim \delta^{\frac{\alpha}{2}m}\delta^{\frac{\alpha^2+\alpha d}{\alpha+2d}j}(m+j+1) \|g\|_{L^{p'}(X, Y^*)} \|f\|_{L^p(X,Y)},
\end{align*}
where the last inequality follows from the unconditional convergence of the adapted martingale difference decomposition (after discarding $1/b_1$). Summing over $m,j = 0, 1, 2, \ldots$ yields the claim.
\end{proof}
\section{Cubes well inside another cube}
We consider the case $R \in \mathcal{D}_{\textrm{good}}'$, $Q \in \mathcal{D}_{\textrm{good}}$, $Q \subset R$ and $\ell(Q) < \delta^r\ell(R)$. As usual, there is a need to introduce some cancellation. To this end, here we consider
the modified matrix
\begin{align*}
\tilde T_{RQ} &= T_{RQ} - \langle b_2, T(b_1\varphi_Q)\rangle\langle \psi_R\rangle_Q\\
&= -\langle \chi_{X \setminus S}b_2, T(b_1\varphi_Q)\rangle \langle \psi_R \rangle_S + \mathop{\sum_{S' \subset R \setminus S}}_{\ell(S') = \delta\ell(R)}
\langle \chi_{S'}\psi_Rb_2, T(b_1\varphi_Q)\rangle, 
\end{align*}
where $S \subset R$ is such that $\ell(S) = \delta\ell(R)$ and $Q \subset S$. The point is that $Q$ is separated from the rest of the subcubes $S'$ and we have introduced cancellation for this one problematic subcube $S$. The correction terms
form a paraproduct operator, the boundedness of which will be considered in the next chapter.

We again begin with some estimates for the matrix $\tilde T_{RQ}$. Let us be brief as these estimates follow pretty much as in \cite[p. 20-21]{HM}. Fix some $z \in Q$. Recalling that for every ball $B = B(c_B, r_B)$ and for every $\epsilon > 0$
we have the estimate (integrate over dyadic blocks $2^jr_B \le d(x, c_B) < 2^{j+1}r_B$ or see \cite[Lemma 2.4]{HM})
\begin{displaymath}
\int_{X \setminus B} \frac{d(x,c_B)^{-\epsilon}}{\lambda(c_B, d(x,c_B))}\,d\mu(x) \lesssim_{\epsilon} r_B^{-\epsilon},
\end{displaymath}
we establish by changing $K(x,y)$ to $K(x,y)-K(x,z)$ (using $\int b_1\varphi_Q\,d\mu = 0$), using the kernel estimates and noting that $X \setminus S \subset X \setminus B(z, d(Q, X \setminus S))$ that
\begin{displaymath}
|\langle \chi_{X \setminus S}b_2, T(b_1\varphi_Q)\rangle| \lesssim \ell(Q)^{\alpha}\|\varphi_Q\|_{L^1(\mu)}d(Q, X \setminus S)^{-\alpha}.
\end{displaymath}
To see that it was legitimate to use the kernel estimates note that in the corresponding integral $d(x,z) \ge d(X \setminus S, Q) \gtrsim \ell(Q)^{\gamma}\ell(S)^{1-\gamma} \ge \delta^{-r(1-\gamma)}\ell(Q)$, so that
$d(x,z) \ge Cd(y,z)$ choosing $r$ large enough. Furthermore,
note that $d(Q, X \setminus S) \gtrsim \ell(Q)^{\gamma}\ell(S)^{1-\gamma} \ge \ell(Q)^{1/2}\ell(R)^{1/2}$, and so continuing the above estimates we obtain
\begin{displaymath}
|\langle \chi_{X \setminus S}b_2, T(b_1\varphi_Q)\rangle| \lesssim \Big(\frac{\ell(Q)}{\ell(R)}\Big)^{\alpha/2}\|\varphi_Q\|_{L^1(\mu)}.
\end{displaymath}

For the other finitely many terms involving a subcube $S' \subset R$ (where we have separation) we have using Lemma \ref{seplem} (or actually, a trivial modification) that
\begin{align*} 
|\langle \chi_{S'}\psi_Rb_2, T(b_1\varphi_Q)\rangle| &\lesssim \Big(\frac{\ell(Q)}{\ell(S')}\Big)^{\alpha/2} \frac{\|\psi_R\|_{L^1(\mu)}}{\lambda(z, \ell(S'))}\|\varphi_Q\|_{L^1(\mu)}\\
&\lesssim \Big(\frac{\ell(Q)}{\ell(R)}\Big)^{\alpha/2} \frac{\|\psi_R\|_{L^1(\mu)}}{\mu(R)}\|\varphi_Q\|_{L^1(\mu)},
\end{align*}
where the last estimate follows after noting that
\begin{displaymath}
\mu(R) \le \mu(B(z,C_0\ell(R))) \le \lambda(z, C_0\ell(R)) = \lambda(z, C_0\delta^{-1}\ell(S')) \lesssim \lambda(z, \ell(S')).
\end{displaymath}
Let us recapitulate all this as a lemma.
\begin{lem}
If $R \in \mathcal{D}'$, $Q \in \mathcal{D}_{\textrm{good}}$, $Q \subset R$, $\ell(Q) < \delta^r\ell(R)$ and $S$ is the subcube of $R$ for which $\ell(S) = \delta\ell(R)$ and $Q \subset S$, we have
\begin{displaymath}
|\tilde T_{RQ}| \lesssim \Big(\frac{\ell(Q)}{\ell(R)}\Big)^{\alpha/2}\Big[|\langle \psi_R\rangle_S| + \frac{\|\psi_R\|_{L^1(\mu)}}{\mu(R)}\Big]\|\varphi_Q\|_{L^1(\mu)}.
\end{displaymath}
\end{lem}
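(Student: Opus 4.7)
The plan is to bound the two pieces appearing in the stated decomposition of $\tilde T_{RQ}$ separately and then combine. The first piece, $-\langle \chi_{X\setminus S} b_2, T(b_1\varphi_Q)\rangle\langle\psi_R\rangle_S$, contributes the factor $|\langle \psi_R\rangle_S|$ in the final bound, while the second piece is a sum of boundedly many separated-cube pairings indexed by siblings $S'\subset R\setminus S$ (boundedly many by geometric doubling), from which the factor $\|\psi_R\|_{L^1(\mu)}/\mu(R)$ will emerge.

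For the far term, I would fix any $z\in Q$ and use the cancellation $\int b_1\varphi_Q\,d\mu=0$ to replace $K(x,y)$ by $K(x,y)-K(x,z)$ in the defining integral. The goodness of $Q$ together with $\ell(Q)<\delta^r\ell(R)$ ensures $d(Q, X\setminus S)\gtrsim \ell(Q)^{\gamma}\ell(S)^{1-\gamma}\ge \delta^{-r(1-\gamma)}\ell(Q)$, so with $r$ large enough we have $d(x,z)\ge C d(y,z)$ uniformly over the integration region, making the H\"older kernel estimate legitimate. The standard dyadic-annulus argument together with the displayed bound $\int_{X\setminus B}d(x,c_B)^{-\alpha}/\lambda(c_B,d(x,c_B))\,d\mu(x)\lesssim r_B^{-\alpha}$ yields
\begin{displaymath}
|\langle \chi_{X\setminus S}b_2, T(b_1\varphi_Q)\rangle| \lesssim \ell(Q)^\alpha d(Q,X\setminus S)^{-\alpha}\|\varphi_Q\|_{L^1(\mu)}.
\end{displaymath}
Applying the stronger goodness estimate $d(Q,X\setminus S)\gtrsim \ell(Q)^{1/2}\ell(R)^{1/2}$ converts this into $(\ell(Q)/\ell(R))^{\alpha/2}\|\varphi_Q\|_{L^1(\mu)}$, and multiplying by $|\langle \psi_R\rangle_S|$ produces the first summand of the claim.

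For each sibling $S'\subset R\setminus S$, I would invoke a trivial modification of Lemma~\ref{seplem} applied to the pair $(Q,S')$, the only adjustment being that the factor $\chi_{S'}\psi_R b_2$ on the $S'$-side is not an adapted Haar function but is bounded by $\|\psi_R\|_{L^\infty(\mu)}$ on a set contained in $S'$. This produces
\begin{displaymath}
|\langle \chi_{S'}\psi_R b_2, T(b_1\varphi_Q)\rangle|\lesssim \Big(\frac{\ell(Q)}{\ell(S')}\Big)^{\alpha/2}\frac{\|\psi_R\|_{L^1(\mu)}}{\lambda(z,\ell(S'))}\|\varphi_Q\|_{L^1(\mu)}.
\end{displaymath}
Since $\ell(S')=\delta\ell(R)$, the first factor is of the desired form $(\ell(Q)/\ell(R))^{\alpha/2}$, and the estimate $\mu(R)\le \mu(B(z,C_0\ell(R)))\le \lambda(z,C_0\delta^{-1}\ell(S'))\lesssim \lambda(z,\ell(S'))$ (using $z\in Q\subset R$ and the upper doubling of $\lambda$) converts the $\lambda$-factor into $1/\mu(R)$. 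Summing over the $\mathcal{O}(1)$ siblings and adding the first contribution then yields the stated bound.

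The only real subtlety is in the far term: one needs the goodness of $Q$ quantitatively, both to activate the H\"older kernel estimate (a qualitative separation between $Q$ and $X\setminus S$ is already nontrivial here, since $S$ is just one dyadic step above $Q$) and to upgrade that separation all the way to $\ell(Q)^{1/2}\ell(R)^{1/2}$ so that the exponent $\alpha/2$ in the final bound is actually achieved. The sibling estimates are then essentially a direct appeal to Lemma~\ref{seplem}, and the conversion $1/\lambda(z,\ell(S'))\lesssim 1/\mu(R)$ is the only additional piece of bookkeeping.
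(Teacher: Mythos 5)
Your proposal is correct and follows essentially the same route as the paper: split $\tilde T_{RQ}$ into the far term over $X\setminus S$ (handled by the cancellation of $b_1\varphi_Q$, the H\"older kernel estimate justified by the goodness bound $d(Q,X\setminus S)\gtrsim \ell(Q)^{\gamma}\ell(S)^{1-\gamma}\ge\ell(Q)^{1/2}\ell(R)^{1/2}$) and the boundedly many sibling terms $S'\subset R\setminus S$ (handled by a trivial modification of Lemma \ref{seplem} plus the doubling estimate $\lambda(z,\ell(S'))\gtrsim\mu(R)$). This matches the paper's argument in both decomposition and all key estimates.
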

A familiar strategy involving kernels and the tangent martingale trick shall now be employed (as in the previous chapter and as in \cite{Hy2}). For this, the following lemma is both natural and useful.
\begin{lem}
If $R \in \mathcal{D}'$, $Q \in \mathcal{D}_{\textrm{good}}$, $Q \subset R$, $\ell(Q) < \delta^r\ell(R)$ and $S$ is the subcube of $R$ for which $\ell(S) = \delta\ell(R)$ and $Q \subset S$, we have
\begin{displaymath}
|\psi_R(x)\tilde T_{RQ}\varphi_Q(y)| \lesssim \Big(\frac{\ell(Q)}{\ell(R)}\Big)^{\alpha/2}\Big[\frac{\chi_{R \setminus S}(x)}{\mu(R)} + \frac{\chi_S(x)}{\mu(S)}\Big].
\end{displaymath}
\end{lem}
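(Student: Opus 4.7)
The plan is to multiply the previous lemma by $|\psi_R(x)|\,|\varphi_Q(y)|$ and then verify the resulting pointwise inequality using the explicit size estimates for the adapted Haar functions recorded in the preliminaries. Since both sides of the claim vanish outside $R \times Q$, I may restrict attention to $(x,y) \in R \times Q$.

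First I would use the identity $\|\varphi_Q\|_{L^1(\mu)} \|\varphi_Q\|_{L^\infty(\mu)} \sim 1$, which yields $\|\varphi_Q\|_{L^1(\mu)} |\varphi_Q(y)| \lesssim 1$. Combined with the previous lemma, this reduces the claim to
\[
|\psi_R(x)| \Big[|\langle \psi_R \rangle_S| + \frac{\|\psi_R\|_{L^1(\mu)}}{\mu(R)}\Big] \lesssim \frac{\chi_{R \setminus S}(x)}{\mu(R)} + \frac{\chi_S(x)}{\mu(S)}
\]
for every $x \in R$. Writing $\psi_R = \varphi^{b_2}_{R,v}$, the recorded pointwise estimate gives
\[
|\psi_R(x)| \sim \mu(R_v)^{1/2}\Big(\frac{\chi_{R_v}(x)}{\mu(R_v)} + \frac{\chi_{\hat R_{v+1}}(x)}{\mu(R)}\Big),
\]
and the $L^p$ formula specialized to $p=1$ yields $\|\psi_R\|_{L^1(\mu)} \sim \mu(R_v)^{1/2}$. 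In particular $|\psi_R(x)| \lesssim \mu(R_v)^{-1/2}$ throughout $R$, so already pointwise the summand $|\psi_R(x)|\,\|\psi_R\|_{L^1(\mu)}/\mu(R)$ is bounded by $1/\mu(R)$, which the right hand side absorbs.

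It remains to control $|\psi_R(x)|\,|\langle \psi_R \rangle_S|$, and I would do this by case analysis on how $S$ sits with respect to the distinguished pair $(R_v, \hat R_{v+1})$ defining $\psi_R$. In case (a) $S = R_v$: on $S$ both $|\psi_R(x)|$ and $|\langle \psi_R \rangle_S|$ are of order $\mu(S)^{-1/2}$, so the product is $\lesssim 1/\mu(S)$; on $R \setminus S$ one has $\psi_R \ne 0$ only on $\hat R_{v+1}$ where $|\psi_R(x)| \sim \mu(S)^{1/2}/\mu(R)$, giving a product $\lesssim 1/\mu(R)$. In case (b) $S \subset \hat R_{v+1}$: both $|\langle \psi_R \rangle_S|$ and the large-side value of $|\psi_R|$ are of order $\mu(R_v)^{1/2}/\mu(R)$, so the product never exceeds $\mu(R_v)/\mu(R)^2$, which is $\lesssim 1/\mu(S)$ on $S$ via $\mu(R_v)\mu(S) \le \mu(R)^2$, and trivially $\lesssim 1/\mu(R)$ on $R \setminus S$. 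In case (c) $S$ lies outside the support of $\psi_R$ we have $\langle \psi_R \rangle_S = 0$ and there is nothing to check. The only real obstacle here is this bookkeeping; no analytic input beyond the previous lemma and the Haar function identities enters.
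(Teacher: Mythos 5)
Your argument is correct and is essentially the paper's own proof: reduce, via the previous lemma together with $\|\varphi_Q\|_{L^1(\mu)}\|\varphi_Q\|_{L^{\infty}(\mu)} \lesssim 1$ and $\|\psi_R\|_{L^1(\mu)}\|\psi_R\|_{L^{\infty}(\mu)} \lesssim 1$, to the pointwise estimate $|\psi_R(x)|\,|\langle \psi_R\rangle_S| \lesssim \chi_{R\setminus S}(x)/\mu(R) + \chi_S(x)/\mu(S)$, and then argue by cases according to the position of $S$ relative to the pair $(R_v, \hat R_{v+1})$, exactly as the paper does (which also records the trivial non-cancellative case $v=0$, relevant when $\ell(R)=\delta^m$). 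One cosmetic imprecision in your case (b): for $x \in R_v$ one has $|\psi_R(x)| \sim \mu(R_v)^{-1/2}$, so the product can be of size $1/\mu(R)$ rather than $\mu(R_v)/\mu(R)^2$; since $R_v \subset R\setminus S$ in that case this is still within the claimed bound, so the conclusion stands.
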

\begin{proof}
Taking the previous lemma and the estimates $\|\varphi_Q\|_{L^1(\mu)}\|\varphi_Q\|_{L^{\infty}(\mu)} \lesssim 1$ and $\|\psi_R\|_{L^1(\mu)}\|\psi_R\|_{L^{\infty}(\mu)} \lesssim 1$ into account it suffices to prove that
\begin{displaymath}
|\langle \psi_R\rangle_S||\psi_R(x)| \lesssim \frac{\chi_{R \setminus S}(x)}{\mu(R)} + \frac{\chi_S(x)}{\mu(S)}.
\end{displaymath}
This follows by recalling that $\psi_R = \varphi_{R, v}^{b_2}$ for some $v$, denoting $S = R_w$, subdividing the estimation into cases ($v=w$ and $x \in S$), ($v=w$ and $x \in R \setminus S$) and $v \ne w$, and finally recalling
that one has
\begin{displaymath}
|\psi_R| \sim \mu(R_v)^{1/2}\Big(\frac{\chi_{R_v}}{\mu(R_v)} + \frac{\chi_{\hat R_{v+1}}}{\mu(R)}\Big)
\end{displaymath}
(or $|\psi_R| \sim \mu(R)^{-1/2}$ if $v=0$ and no subdivision into cases is necessary).
\end{proof}
We are now ready to prove the main result of this section. 
\begin{prop}
It holds
\begin{displaymath}
\Big| \sum_{R \in \mathcal{D}_{\textrm{good}}'} \mathop{\sum_{Q \in \mathcal{D}_{\textrm{good}}, \, Q \subset R}}_{\ell(Q) < \delta^r\ell(R)} \langle g, \psi_R \rangle \tilde T_{RQ} \langle \varphi_Q, f \rangle \Big| \lesssim 
\|g\|_{L^{p'}(X,Y^*)}\|f\|_{L^p(X, Y)}.
\end{displaymath}
\end{prop}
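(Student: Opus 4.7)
The plan is to imitate the proof of Proposition~\ref{sepcubesprop}. Split the sum by relative scale $\ell(Q) = \delta^m \ell(R)$ with $m \ge r+1$, apply Lemma~\ref{rantrick} to pass to an $L^p(\Omega \times X, Y)$ norm with Rademacher signs $\epsilon_k$, rewrite the inner sum as an averaging operator with a bounded kernel using the pointwise estimate from the preceding lemma, invoke the tangent martingale trick, and finish with the unconditional convergence of the $b_1$-adapted martingale difference decomposition. The decay factor $\delta^{\alpha m/2}$ produces a convergent geometric series in $m$.

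To set up the tangent martingale trick one needs an averaging partition. Using $\mathcal{D}$-goodness of $R$, for each fixed $s \ge r$ the cube $R \in \mathcal{D}'$ of generation $k$ is contained in a unique $\mathcal{D}$-cube $P(R) \in \mathcal{D}_{k-s}$, with $\mu(P(R))$ comparable to $\mu(R)$ up to a constant depending on $s$ and on the doubling of $\lambda$. Take $\mathcal{A}_k := \mathcal{D}_{k-s}$. Summing $\psi_R(x)\, \tilde T_{RQ}\, \varphi_Q(y)$ over $R, Q$ with $R \subset P$ (for fixed $P \in \mathcal{A}_k$) and invoking the preceding pointwise estimate together with $\|\varphi_Q\|_{L^1(\mu)}\|\varphi_Q\|_{L^\infty(\mu)} \lesssim 1$ and $\|\psi_R\|_{L^1(\mu)}\|\psi_R\|_{L^\infty(\mu)} \lesssim 1$, one obtains a kernel $\tilde K_P(x,y)$ supported on $P \times P$ for which $|\mu(P)\, \tilde K_P(x,y)| \lesssim \delta^{\alpha m/2}$. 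Using $\int b_1 \varphi_Q\,d\mu = 0$, rewrite $\langle \varphi_Q, f\rangle = \langle \varphi_Q, \Delta^{b_1}_{k+m}f\rangle$ so that the input to the tangent martingale trick takes the form $f_A = \chi_A\, \Delta^{b_1}_{k+m} f / b_1$.

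The main obstacle, exactly as in Proposition~\ref{sepcubesprop}, is the measurability condition of the tangent martingale trick: $f_A$ must be $\sigma(\mathcal{A}_{k+1})$-measurable, whereas $\Delta^{b_1}_{k+m}f/b_1$ is only $\sigma(\mathcal{D}_{k+m+1})$-measurable and this generation lies strictly below that of $\mathcal{A}_{k+1}$. I would deal with this exactly as in the separated case, by decomposing the $k$-sum into $m+s+1$ arithmetic progressions modulo $m+s+1$, so that within each progression the generation of $\mathcal{A}_{n+1}$ coincides with that of the $\mathcal{D}_{k+m+1}$ on which $\Delta^{b_1}_{k+m}f/b_1$ is constant. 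This costs only a polynomial factor $m+1$, which is harmless in view of the $\delta^{\alpha m/2}$ decay.

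After applying the tangent martingale trick, the contraction principle removes the bounded factor $1/b_1$, and the unconditional convergence of the $b_1$-adapted martingale difference decomposition in $L^p(X,Y)$ bounds the resulting randomized expression by $\|f\|_{L^p(X,Y)}$. Summing the estimate $\delta^{\alpha m/2}(m+1)\,\|g\|_{L^{p'}(X,Y^*)}\|f\|_{L^p(X,Y)}$ over $m \ge r+1$ gives a convergent geometric series and hence the claimed bound.
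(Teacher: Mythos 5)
Your overall architecture (split by relative generation $m$, Lemma~\ref{rantrick}, bounded kernels, tangent martingale trick, unconditionality, geometric decay $\delta^{\alpha m/2}$) matches the paper's, but the step where you arrange the measurability hypotheses is where your argument breaks, and it breaks for a reason specific to the non-homogeneous setting. You take the averaging partition to be $\mathcal{A}_k=\mathcal{D}_{k-s}$ and assert that the containing cube $P(R)\in\mathcal{D}_{k-s}$ of a good $R\in\mathcal{D}'_k$ satisfies $\mu(P(R))\sim_s\mu(R)$ ``by the doubling of $\lambda$''. That is false: $\mu$ is only upper doubling, i.e.\ dominated above by the doubling function $\lambda$, and no lower bound of $\mu(R)$ or $\mu(R_w)$ in terms of $\mu(P(R))$ is available ($\mu(P)/\mu(R)$ can be arbitrarily large). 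Since the pointwise bound of the preceding lemma is $|\psi_R(x)\tilde T_{RQ}\varphi_Q(y)|\lesssim(\ell(Q)/\ell(R))^{\alpha/2}\bigl[\chi_{R\setminus S}(x)/\mu(R)+\chi_S(x)/\mu(S)\bigr]$ with $\mu$-measures (not $\lambda$) in the denominators, your kernel normalized by $\mu(P)$ is not bounded, so the tangent martingale trick does not apply. Note the contrast with Proposition~\ref{sepcubesprop}: there the enlargement to a coarser $\mathcal{D}$-cube $S$ was legitimate precisely because the matrix estimate was phrased in terms of $\lambda$, and the inequality $\mu(S)\lesssim\delta^{-d\theta(j)}\inf_{w\in S}\lambda(w,\delta^{k-j})$ goes in the harmless direction; that mechanism is not available here.

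The paper's resolution keeps the averaging sets equal to $R$ and to its child $R_w$ themselves, with two separate kernels $K^c_R$ (supported on $R\times R$, normalized by $\mu(R)$, carrying the $\chi_{R\setminus R_w}$ part) and $K^i_R$ (supported on $R_w\times R_w$, normalized by $\mu(R_w)$), so the kernel bounds come straight from the lemma with no comparison of measures across grids. The measurability problem caused by the mixing of the two grids is then fixed not by changing the averaging sets but by changing the filtration: one introduces the mixed partitions $\mathcal{F}_k=\{S\cap Q\neq\emptyset:\ S\in\mathcal{D}'_k,\ Q\in\mathcal{D}_{k-r-1}\}$ and uses goodness to see that $\mathcal{D}'_{\mathrm{good},k}\subset\mathcal{F}_k$ and that children $R_w$ of good $R\in\mathcal{D}'_{\mathrm{good},k}$ lie in $\mathcal{F}_{k+1}$; the sums are then extended over $\mathcal{F}_k$, $\mathcal{F}_{k+1}$ with zero kernels for the new sets, and one passes to arithmetic progressions in $k$ exactly as you propose, yielding the factor $(m+r+1)$ and the summable bound $\delta^{\alpha m/2}(m+r+1)$. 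So your plan can be saved, but only by replacing your partition $\mathcal{D}_{k-s}$ and the doubling-of-$\mu$ claim with this two-kernel decomposition and the mixed partition device; as written, the proof has a genuine gap.
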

\begin{proof}
Let $s(R)$ denote the number of subcubes of a cube $R \in \mathcal{D}'$ and set $s = \max_{R \in \mathcal{D}'} s(R) \lesssim 1$. Fix $w \in \{1, \ldots, s\}$ and $m \in \{r+1, r+2, \ldots\}$.
The already used randomization trick gives
\begin{align*}
\Big| \sum_{k \in \Z} \sum_{R \in \mathcal{D}_{\textrm{good},k}'}& \mathop{\sum_{Q \in \mathcal{D}_{\textrm{good},k+m}}}_{Q \subset R_w} \langle g, \psi_R \rangle \tilde T_{RQ} \langle \varphi_Q, f \rangle \Big| \\
&\lesssim \|g\|_{L^{p'}(X,Y^*)} \Big\|\sum_{k \in \Z} \epsilon_k \sum_{R \in \mathcal{D}_{\textrm{good},k}'} \mathop{\sum_{Q \in \mathcal{D}_{\textrm{good},k+m}}}_{Q \subset R_w}
\psi_R \tilde T_{RQ} \langle \varphi_Q, f \rangle\Big\|_{L^p(\Omega \times X, Y)}.
\end{align*}

We introduce the relevant kernels now. Indeed, set
\begin{align*}
K_R^c &= \delta^{-\alpha m/2}  \mathop{\sum_{Q \in \mathcal{D}_{\textrm{good},k+m}}}_{Q \subset R_w} \mu(R)\chi_{R \setminus R_w}(x)\psi_R(x)\tilde T_{RQ} \varphi_Q(y)b_1(y), \\
K_R^i &= \delta^{-\alpha m/2}  \mathop{\sum_{Q \in \mathcal{D}_{\textrm{good},k+m}}}_{Q \subset R_w} \mu(R_w)\chi_{R_w}(x)\psi_R(x)\tilde T_{RQ} \varphi_Q(y)b_1(y).
\end{align*}
The previous lemma yields at once that $|K_R^c(x,y)| \lesssim 1$ and $|K_R^i(x,y)| \lesssim 1$. Also, the supports lie in $R \times R$ and $R_w \times R_w$ respectively. There holds
\begin{align*}
\sum_{k \in \Z} &\epsilon_k \sum_{R \in \mathcal{D}_{\textrm{good},k}'} \mathop{\sum_{Q \in \mathcal{D}_{\textrm{good},k+m}}}_{Q \subset R_w} \psi_R(x) \tilde T_{RQ} \langle \varphi_Q, f \rangle \\
&= \delta^{\alpha m/2}\sum_{k \in \Z} \epsilon_k \sum_{R \in \mathcal{D}_{\textrm{good},k}'}  \frac{\chi_R(x)}{\mu(R)}
\int_R K^c_R(x,y)\frac{\chi_R(y)\Delta^{b_1}_{k+m}f(y)}{b_1(y)}\,d\mu(y) \\
&+ \delta^{\alpha m/2} \sum_{k \in \Z} \epsilon_k \sum_{R \in \mathcal{D}_{\textrm{good},k}'} \frac{\chi_{R_w}(x)}{\mu(R_w)}
\int_{R_w} K^i_R(x,y)\frac{\chi_{R_w}(y)\Delta^{b_1}_{k+m}f(y)}{b_1(y)}\,d\mu(y).
\end{align*}

The tangent martingale trick cannot quite yet be used -- the measurability conditions need not hold (note the important difference with the argument of the previous section -- there we did not have
the dyadic systems $\mathcal{D}$ and $\mathcal{D}'$ mixed in the way we have here). To fix this, one simply defines new partitions
\begin{displaymath}
\mathcal{F}_k = \{S \cap Q \ne \emptyset: S \in \mathcal{D}'_k,\, Q \in \mathcal{D}_{k-r-1}\}
\end{displaymath}
and exploits the goodness of the cubes $R$ via the observations
\begin{displaymath}
\mathcal{D}'_{\textrm{good},k} \subset \mathcal{F}_k \qquad \textrm{and} \qquad \{R_w \in \mathcal{D}_{k+1}': R_w \subset R \in \mathcal{D}_{\textrm{good},k}'\} \subset \mathcal{F}_{k+1}. 
\end{displaymath}

We then extend the above sums to be over the sets $\mathcal{F}_k$ and $\mathcal{F}_{k+1}$ respectively by using zero kernels for all the new sets $R$. We may then apply the tangent martingale trick
after passing to the obvious subseries over $k$ yielding, just like in the previous section, the bound
\begin{displaymath}
\Big| \sum_{k \in \Z} \sum_{R \in \mathcal{D}_{\textrm{good},k}'} \mathop{\sum_{Q \in \mathcal{D}_{\textrm{good},k+m}}}_{Q \subset R_w} \langle g, \psi_R \rangle \tilde T_{RQ} \langle \varphi_Q, f \rangle \Big|
\lesssim \delta^{\alpha m/2} (m+r+1) \|g\|_{L^{p'}(X,Y^*)}\|f\|_{L^p(X,Y)},
\end{displaymath}
from which the claim follows after summing over $m = r+1, r+2, \ldots$ and $w = 1, \ldots, s$.
\end{proof}
\section{The correction term and the relevant paraproduct}
Recall that we subtracted $\langle b_2, T(b_1\varphi_Q)\rangle\langle \psi_R\rangle_Q$ from $T_{RQ}$ in the case
$R \in \mathcal{D}_{\textrm{good}}'$, $Q \in \mathcal{D}_{\textrm{good}}$, $Q \subset R$ and $\ell(Q) < \delta^r\ell(R)$.
Thus, we now need to consider the sum
\begin{equation}\label{par1}
\sum_{R \in \mathcal{D}_{\textrm{good}}'} \mathop{\sum_{Q \in \mathcal{D}_{\textrm{good}}, \, Q \subset R}}_{\ell(Q) < \delta^r\ell(R)} \langle g, \psi_R \rangle \langle b_2, T(b_1\varphi_Q)\rangle\langle \psi_R\rangle_Q \langle \varphi_Q, f \rangle.
\end{equation}
Recall also that we always have the suppressed summation over $u,v$ and the restriction that $\delta^{k_0} < \ell(Q), \, \ell(R) \le \delta^m$. Writing out the above sum unhiding these conventions and then recalling
that e.g. $\Delta^{b_1}_Qf = \sum_u b_1\varphi_{Q,u} \langle \varphi_{Q,u}, f\rangle$, we see that (writing explicitly only the relevant restrictions)
\begin{align*}
(\ref{par1}) = \mathop{\sum_{Q \in \mathcal{D}_{\textrm{good}}}}_{\ell(Q) > \delta^{k_0}} \Big( &\mathop{\sum_{R \in \mathcal{D}_{\textrm{good}}', \, R \supset Q}}_{\delta^{-r}\ell(Q) < \ell(R) \le \delta^m} \langle \Delta^{b_2}_Rg / b_2\rangle_Q \\
&+ \mathop{\sum_{R \in \mathcal{D}_{\textrm{good}}', \, R \supset Q}}_{\delta^{-r}\ell(Q) < \ell(R) = \delta^m} \langle E^{b_2}_Rg / b_2\rangle_Q \Big)
\langle T^*b_2, \Delta^{b_1}_Qf\rangle.
\end{align*}

Now we use the trick from \cite{Hy2} noting that the inner summation would collapse to $\langle E^{b_2}_R g/b_2\rangle_Q = \langle g\rangle_R / \langle b_2 \rangle_R$, where
$R \in \mathcal{D}'$ is the unique cube of generation gen$(Q)-r$ for which $Q \subset R$, were it not for the restriction to good $\mathcal{D}'$-cubes in the summation. Now it is clear
why Lemma \ref{goodtonogood} was worth proving. Indeed, we may achieve this effect just by considering the grid $\mathcal{D}'$ being fixed and averaging over all the other random quantities used in the randomization of cubes.
We use Lemma \ref{goodtonogood} twice. First, to remove the restriction to good $R$, and after collapsing the series, to put the restriction back. This yields
\begin{align*}
\mathbb{E}(\ref{par1}) &= \mathbb{E} \sum_{Q \in \mathcal{D}_{\textrm{good}}} \mathop{\sum_{R \in \mathcal{D}_{\textrm{good}}', \, R \supset Q}}_{\ell(R) = \delta^{-r}\ell(Q)} \frac{\langle g\rangle_R}
{\langle b_2 \rangle_R} \langle T^*b_2, \Delta^{b_1}_Qf\rangle \\
&= \mathbb{E} \sum_{R \in \mathcal{D}_{\textrm{good}}'} \mathop{\sum_{Q \in \mathcal{D}_{\textrm{good}}, \, Q \subset R}}_{\ell(Q) = \delta^r\ell(R)} \frac{\langle g\rangle_R}
{\langle b_2 \rangle_R} \langle T^*b_2, b_1\varphi_Q \rangle\langle \varphi_Q, f\rangle,
\end{align*}
where the standard summation conditions were yet again suppressed.

Notice now that the right hand side of this is the expectation of a pairing $\langle \Pi g, f\rangle$, where we have (for every fixed choice of the random quantities) the paraproduct
\begin{displaymath}
\Pi g = \sum_{R \in \mathcal{D}_{\textrm{good}}'} \mathop{\sum_{Q \in \mathcal{D}_{\textrm{good}}, \, Q \subset R}}_{\ell(Q) = \delta^r\ell(R)} \frac{\langle g\rangle_R}
{\langle b_2 \rangle_R} \langle T^*b_2, b_1\varphi_Q \rangle \varphi_Q.
\end{displaymath}
We shall next study this with any fixed choice of the random quantities. Note that in \cite{HM} the paraproduct had the inessential difference that instead of the requirement of $Q$ being good we had
the requirement $d(Q, X \setminus R) \ge CC_0\ell(Q)$ (which follows from the goodness), and the essential difference that the bigger cubes were not restricted to good cubes. As was noted in
\cite{Hy2}, this restriction is useful in this vector valued context.
\begin{lem}
If $\varphi \in \textrm{BMO}^p_{\kappa}(\mu)$, then
\begin{displaymath}
\Big\| \mathop{\sum_{Q \in \mathcal{D}_{\textrm{good}}, \, Q \subset R}}_{\ell(Q) \le \delta^r\ell(R)} \epsilon_Q \langle \varphi, b_1\varphi_Q \rangle \varphi_Q \Big\|_{L^p(\Omega \times X)}
\lesssim \mu(R)^{1/p}\|\varphi\|_{\textrm{BMO}^p_{\kappa}(\mu)}.
\end{displaymath}
\end{lem}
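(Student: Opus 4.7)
The plan is to view the randomized sum as the image of a $BMO^p_\kappa$-normalized truncation of $\varphi$ under the adjoint of the $b_1$-adapted martingale-difference operator, and then dualize the unconditional convergence of the adapted martingale decomposition already recorded in Section 5.

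First I would subtract a constant. Let $c = \varphi_R$ be the constant associated to $R$ in the definition of $\|\varphi\|_{BMO^p_\kappa}$, and set $h := (\varphi - c)\chi_R$. The cancellation $\int b_1\varphi_Q\,d\mu = 0$ together with $Q \subset R$ yields
\begin{displaymath}
\langle \varphi, b_1\varphi_Q\rangle = \langle h, b_1\varphi_Q\rangle,
\end{displaymath}
so the sum is unchanged after replacing $\varphi$ by $h$.

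Next I would introduce the dual operator $D_Q := (\Delta_Q^{b_1})^*$, which a direct computation with the bilinear pairing $\langle f,g\rangle = \int fg\,d\mu$ identifies as $D_Q g = \sum_u \varphi_{Q,u}^{b_1}\langle b_1\varphi_{Q,u}^{b_1}, g\rangle$. The sum in the lemma then takes the form $\sum_Q \epsilon_Q D_Q h$. The unconditional convergence of the decomposition $g \mapsto \sum_Q \Delta_Q^{b_1} g$ in $L^{p'}(X)$, recalled in Section 5 following \cite[Proposition 4.1]{Hy2}, provides the uniform-in-signs bound
\begin{displaymath}
\Big\|\sum_Q \epsilon_Q \Delta_Q^{b_1} g\Big\|_{L^{p'}(X)} \lesssim \|g\|_{L^{p'}(X)},
\end{displaymath}
and dualizing gives, again uniformly in the signs,
\begin{displaymath}
\Big\|\sum_Q \epsilon_Q D_Q h\Big\|_{L^{p}(X)} \lesssim \|h\|_{L^{p}(X)}.
\end{displaymath}
Raising to the $p$-th power and averaging over the $\epsilon_Q$ yields the same bound in $L^p(\Omega \times X)$. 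Note that the restriction to $Q \in \mathcal{D}_{\textrm{good}}$ is harmless here, since it only amounts to setting $\epsilon_Q = 0$ on bad cubes, which is absorbed into the uniform-in-signs estimate.

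Finally, the defining inequality of $BMO^p_\kappa(\mu)$ applied at $R$ reads
\begin{displaymath}
\|h\|_{L^p(X)} = \|\varphi - c\|_{L^p(R,\mu)} \le \|\varphi\|_{BMO^p_\kappa(\mu)}\,\mu(\kappa R)^{1/p},
\end{displaymath}
and the $\kappa$-enlargement is then absorbed into the implicit constant using the equivalence of $BMO$-norms with varying parameters delivered by the RBMO/John--Nirenberg theory of Section 3. The main obstacle I anticipate is the clean dualization step: once the $L^{p'}$-unconditionality of the adapted decomposition $\sum_Q \Delta_Q^{b_1}$ is granted, the bound for $\sum_Q \epsilon_Q D_Q$ is formal, but one must verify that the uniformity in the signs (not merely norm convergence) passes through duality, and that the dual filtration under which $D_Q$ acts as a martingale difference is compatible with the restriction $Q \subset R$, $\ell(Q) \le \delta^r \ell(R)$ imposed in the lemma.
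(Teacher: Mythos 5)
The reduction in your first two steps is fine: by the cancellation $\int b_1\varphi_Q\,d\mu=0$ and the support condition $Q\subset R$ you may replace $\varphi$ by $h=(\varphi-c)\chi_R$, and the unconditionality of the adapted martingale decomposition (as in \cite[Proposition 4.1]{Hy2}), dualized, does give
$\big\|\sum_Q\epsilon_Q\varphi_Q\langle b_1\varphi_Q,h\rangle\big\|_{L^p(\Omega\times X)}\lesssim\|h\|_{L^p(\mu)}$
uniformly in the signs (and dropping the bad cubes is indeed harmless). The genuine gap is the last step. The measure $\mu$ is only upper doubling, not doubling, so the bound you reach is not the one claimed: to invoke the $\textrm{BMO}^p_{\kappa}(\mu)$ condition you must pass from the dyadic cube $R$ to a ball $B_R\supset R$ of radius $\sim\ell(R)$, and what you get is $\|h\|_{L^p(\mu)}\lesssim\mu(\kappa B_R)^{1/p}\|\varphi\|_{\textrm{BMO}^p_{\kappa}(\mu)}$. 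For a non-doubling measure $\mu(\kappa B_R)$ (indeed already $\mu(B_R)$) can be arbitrarily much larger than $\mu(R)$, and no amount of John--Nirenberg/RBMO theory repairs this: changing the parameter $\kappa$ or the exponent $p$ changes the \emph{norm} up to constants, but it never converts the measure of an enlarged ball into the measure of the inscribed cube. Obtaining $\mu(R)^{1/p}$, rather than $\mu(\kappa B_R)^{1/p}$, is precisely the nontrivial content of the lemma in the non-homogeneous setting.

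A telltale sign is that your argument never uses the goodness of the cubes $Q$ nor the gap $\ell(Q)\le\delta^r\ell(R)$, whereas the proof the paper points to (\cite[Lemma 7.1]{HM}, with the vector-valued ingredients of \cite[Lemma 9.3]{Hy2}) relies on exactly these: good cubes $Q\subset R$ with $\ell(Q)\le\delta^r\ell(R)$ satisfy $d(Q,X\setminus R)\gtrsim\ell(Q)^{\gamma}\ell(R)^{1-\gamma}$, i.e.\ they sit well inside $R$, which allows one to apply the BMO/RBMO condition only on balls $B$ whose $\kappa$-dilations remain inside $R$ (so that $\mu(\kappa B)\le\mu(R)$), and to control the mismatch of the constants $\varphi_B$ across scales by the RBMO coefficients; this is combined with a covering/stopping-type decomposition before summing. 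In the doubling (homogeneous) case your short argument would essentially suffice, but here it proves the estimate with $\mu(\kappa B_R)^{1/p}$ on the right, which is strictly weaker and not enough for the paraproduct bound that the lemma feeds into.
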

\begin{proof}
This can be proven similarly as \cite[Lemma 7.1]{HM} borrowing some minor additional ingredients related to this vector valued context from the proof of \cite[Lemma 9.3]{Hy2}. 
\end{proof}
Since $T^*b_2 \in \textrm{RBMO}(\mu) \subset \textrm{BMO}^p_{\kappa}(\mu)$ for any $1 \le p < \infty$ (see the relevant chapter of the present work), the previous lemma is important in proving that the paraproduct $\Pi$ is bounded.
We will not provide the exact details instead citing \cite{Hy2} as this part of the argument no longer has anything special to do with the metric space structure or with our use of more general measures.
Indeed, having been able to do all these reductions in the metric space setting, one can now follow the argument found in \cite[p. 32-33]{Hy2} pretty much word to word (when reading that, notice that the chapter 3 of \cite{Hy2} is already in a abstact form suitable for us), and this yields:
\begin{prop}
We have
\begin{displaymath}
\|\Pi g\|_{L^{p'}(X, Y^*)} \lesssim \|T^*b_2\|_{\textrm{BMO}^{p'}_{\kappa}(\mu)} \|g\|_{L^{p'}(X,Y^*)} \lesssim \|g\|_{L^{p'}(X,Y^*)}.
\end{displaymath}
\end{prop}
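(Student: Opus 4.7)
The plan is to recognize $\Pi$ as a dyadic paraproduct in the sense of the abstract theory of \cite[Chapter 3]{Hy2}. Rewrite
\[
\Pi g = \sum_{R \in \mathcal{D}'_{\textrm{good}}} \frac{\langle g\rangle_R}{\langle b_2\rangle_R}\, a_R, \qquad
a_R := \mathop{\sum_{Q \in \mathcal{D}_{\textrm{good}},\, Q\subset R}}_{\ell(Q) = \delta^r\ell(R)} \langle T^*b_2, b_1\varphi_Q\rangle\, \varphi_Q,
\]
so that the $R$-blocks $a_R$ are the ``martingale difference pieces'' of the symbol $T^*b_2$ at the scale of $R$, while the scalar factors $\langle g\rangle_R/\langle b_2\rangle_R$ are a dyadic averaging of $g$ (well-behaved since $b_2$ is accretive, so $|\langle b_2\rangle_R^{-1}| \lesssim 1$). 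This is exactly the structural form to which the abstract UMD paraproduct boundedness result applies, with $L^{p'}(Y^*)$-bound $\lesssim \|T^*b_2\|_{\textrm{BMO}^{p'}_{\kappa}(\mu)}$.

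To implement this, first dualize: estimate $|\langle \Pi g, f\rangle|$ against unit vectors $f \in L^p(X,Y)$. Then apply Lemma~\ref{rantrick} with the outer index being $R$ to insert Rademacher signs $\epsilon_R$ at no cost, reducing matters to bounding
\[
\Big\| \sum_R \epsilon_R \frac{\langle g\rangle_R}{\langle b_2\rangle_R}\, a_R \Big\|_{L^{p'}(\Omega\times X, Y^*)}.
\]
The previous lemma provides the crucial BMO-increment estimate for each $a_R$, namely $\|\sum_Q \epsilon_Q \langle T^*b_2, b_1\varphi_Q\rangle\varphi_Q\|_{L^{p'}} \lesssim \mu(R)^{1/p'}\|T^*b_2\|_{\textrm{BMO}^{p'}_\kappa(\mu)}$, which is exactly the normalization required for the abstract paraproduct input. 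One then combines this with the UMD-valued dyadic square function / Carleson estimate for the coefficients $\langle g\rangle_R/\langle b_2\rangle_R$ (the coefficients are pointwise controlled by $M_{\mathcal{D}'}g$) to produce the first inequality of the proposition.

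The second inequality $\|T^*b_2\|_{\textrm{BMO}^{p'}_\kappa(\mu)} \lesssim 1$ is exactly the content of the John--Nirenberg/RBMO reduction established in Section~3, which promotes the a~priori $\textrm{BMO}^1_\kappa(\mu)$-hypothesis to $\textrm{BMO}^q_\kappa(\mu)$ for every $1 < q < \infty$. The main obstacle one might worry about is whether the abstract machinery of \cite[Chapter~3]{Hy2} really transfers: however, that chapter is formulated in an abstract filtered probability space and the only geometric input required -- the adapted Haar system, its $L^p$-normalization, the unconditional convergence of the adapted martingale difference decomposition, and the BMO-increment bound above -- has already been verified in our setting. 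Hence the argument of \cite[pp.~32--33]{Hy2} can be carried over essentially verbatim.
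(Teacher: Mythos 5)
Your proposal is correct and follows essentially the same route as the paper: both rest on the previous lemma's BMO-increment estimate for the blocks $a_R$, the Section~3 RBMO/John--Nirenberg upgrade giving $\|T^*b_2\|_{\textrm{BMO}^{p'}_{\kappa}(\mu)} \lesssim 1$, and the observation that the abstract paraproduct argument of \cite[Chapter~3 and pp.~32--33]{Hy2} transfers verbatim to this filtered setting. The paper in fact gives even fewer details than you do, explicitly declining to reproduce the argument of \cite{Hy2}, so your sketch of the intermediate randomization and Carleson-type steps is a harmless elaboration of the same strategy.
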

The main result of this chapter now readily follows.
\begin{prop}
We have
\begin{displaymath}
\Big|\mathbb{E} \sum_{R \in \mathcal{D}_{\textrm{good}}'} \mathop{\sum_{Q \in \mathcal{D}_{\textrm{good}}, \, Q \subset R}}_{\ell(Q) < \delta^r\ell(R)} \langle g, \psi_R \rangle \langle b_2, T(b_1\varphi_Q)\rangle\langle \psi_R\rangle_Q \langle \varphi_Q, f \rangle\Big| \le \|g\|_{L^{p'}(X,Y^*)}\|f\|_{L^p(X,Y)},
\end{displaymath}
where we average over all the random quantities used in the randomization of the cubes.
\end{prop}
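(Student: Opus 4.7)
The plan is to assemble this as an immediate corollary of the two main preceding results: the telescoping identity (obtained via two applications of Lemma \ref{goodtonogood}) and the paraproduct bound in the previous proposition. First, I would recall the identity displayed in this section, namely
\[
\mathbb{E}\,(\ref{par1}) = \mathbb{E}\,\langle \Pi g, f\rangle,
\]
where $\Pi$ is the paraproduct introduced above. This identity is precisely what motivated the modifications to the randomization scheme in the section on random dyadic systems: Lemma \ref{goodtonogood} is used once to drop the $\mathcal{D}'_{\textrm{good}}$ restriction (so the inner sum over $R \supset Q$ telescopes, collapsing $\sum_R \langle \Delta^{b_2}_R g/b_2\rangle_Q + \sum_R \langle E^{b_2}_R g/b_2\rangle_Q$ into $\langle g \rangle_R / \langle b_2\rangle_R$ for the unique ancestor $R$ at generation $\mathrm{gen}(Q)-r$) and then a second time to re-impose it.

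With this identification in hand, I would apply H\"older's inequality for the $Y^* \times Y$ duality bracket fiber-by-fiber in $X$ to obtain, for every fixed realization of the random grids,
\[
|\langle \Pi g, f\rangle| \le \|\Pi g\|_{L^{p'}(X,Y^*)}\,\|f\|_{L^p(X,Y)}.
\]
The preceding proposition then supplies the bound $\|\Pi g\|_{L^{p'}(X,Y^*)} \lesssim \|g\|_{L^{p'}(X,Y^*)}$, with an implicit constant that is \emph{deterministic} (it depends only on $\|T^*b_2\|_{\mathrm{BMO}^{p'}_{\kappa}(\mu)}$, which by the John--Nirenberg theorem of the earlier section is controlled by $\|T^*b_2\|_{\mathrm{BMO}^1_{\kappa}(\mu)}$ and does not see the random grids at all). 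Since $\|f\|_{L^p(X,Y)}$ and $\|g\|_{L^{p'}(X,Y^*)}$ are likewise deterministic, taking expectations produces the asserted estimate,
\[
|\mathbb{E}\,\langle \Pi g, f\rangle| \le \mathbb{E}\,|\langle \Pi g, f\rangle| \lesssim \|g\|_{L^{p'}(X,Y^*)}\,\|f\|_{L^p(X,Y)}.
\]

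There is essentially no obstacle at this step: all of the substantive work has already been carried out in Lemma \ref{goodtonogood} (which required the pseudogoodness tweak precisely so that $\mathbb{P}(Q\text{ is good})$ is independent of $Q$, making the swap of good/all cubes at the level of expectation possible) and in the proof of the paraproduct boundedness (which invokes the abstract UMD paraproduct theory of \cite[Chapter 3]{Hy2} applied to the filtered space generated by $\mathcal{D}'$, together with the $\mathrm{RBMO}(\mu)$ membership of $T^*b_2$ established in the John--Nirenberg section). The present statement is therefore a short bookkeeping assembly of these pieces.
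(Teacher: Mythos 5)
Your argument is correct and is exactly the route the paper takes: the telescoping identity $\mathbb{E}(\ref{par1}) = \mathbb{E}\langle \Pi g, f\rangle$ obtained from the double application of Lemma \ref{goodtonogood}, followed by H\"older in the $Y^*$--$Y$ duality and the deterministic bound $\|\Pi g\|_{L^{p'}(X,Y^*)} \lesssim \|g\|_{L^{p'}(X,Y^*)}$ from the preceding proposition, after which taking expectations finishes the proof (the paper itself records only that the result ``readily follows'' from these pieces). Your only deviation is cosmetic: the conclusion is naturally a $\lesssim$ rather than the literal $\le$ printed in the statement, which is how the paper uses it as well.
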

\section{Estimates for adjacent cubes of comparable size}
We shall now deal with the part of the series where good cubes $Q \in \mathcal{D}_{\textrm{good}}$ and $R \in \mathcal{D}_{\textrm{good}}'$ are adjacent ($d(Q, R) < CC_0\min(\ell(Q), \ell(R))$) and of
comparable size ($|\textrm{gen}(Q)-\textrm{gen}(R)| \le r$). We denote the last condition by $\ell(Q) \sim \ell(R)$.
Also, only the size, and not the cancellation, properties of the adapted Haar functions are used.

We are given some fixed small $\epsilon > 0$. Given cubes $Q$ and $R$ define $\Delta = Q \cap R$, $\delta_Q = \{x: d(x,Q) \le \epsilon \ell(Q) \textrm{ and } d(x, X \setminus Q) \le \epsilon \ell(Q)\}$
and $\delta_R = \{x: d(x, R) \le \epsilon \ell(R) \textrm{ and } d(x, X \setminus R) \le \epsilon \ell(R)\}$. Also, set
\begin{displaymath}
Q_b = Q \cap \bigcup_{R' \in \mathcal{D}':\, \ell(R') \sim \ell(Q)} \delta_{R'}
\end{displaymath}
and
\begin{displaymath}
R_b = R \cap \bigcup_{Q' \in \mathcal{D}:\, \ell(Q') \sim \ell(R)} \delta_{Q'}.
\end{displaymath}
Set also $Q_s = Q \setminus \Delta \setminus \delta_R$, $Q_{\partial} = Q \setminus \Delta \setminus Q_s$,
$R_s = R \setminus \Delta \setminus \delta_Q$ and $R_{\partial} = R \setminus \Delta \setminus R_s$. Furthermore, we still define that
$\tilde \Delta = \Delta \setminus \delta_Q \setminus \delta_R$.

Given $R \in \mathcal{D}_{\textrm{good}}'$, there are only finitely many $Q \in  \mathcal{D}_{\textrm{good}}$ which are adjacent to $R$ and of comparable size. Thus, one needs only to study finitely many subseries
\begin{displaymath}
\sum_{R \in \mathcal{D}_{\textrm{good}}'} \langle g, \psi_R\rangle T_{RQ} \langle \varphi_Q, f\rangle,
\end{displaymath}
where $Q = Q(R)$ is implicitly a function of $R$ -- a convention that is used throughout this section. We shall also act like the mapping $R \mapsto Q(R)$ is invertible -- this only amounts to
identifying some terms with zero (if there are no preimages) or splitting into finitely many new subseries using the triangle inequality (if there are multiple preimages).

Recall that $T_{RQ} = \langle \psi_Rb_2, T(b_1\varphi_Q)\rangle$. We note that
\begin{displaymath}
b_1\varphi_Q \langle \varphi_Q, f\rangle = \mathop{\sum_{Q' \in \mathcal{D}: Q' \subset Q}}_{\ell(Q') = \delta\ell(Q)} b_1\chi_{Q'}\langle \varphi_Q \rangle_{Q'} \langle \varphi_Q, f\rangle
= \mathop{\sum_{Q' \in \mathcal{D}: Q' \subset Q}}_{\ell(Q') = \delta\ell(Q)} b_1\chi_{Q'}A_{Q'},
\end{displaymath}
where $A_{Q'} = \langle \varphi_Q \rangle_{Q'} \langle \varphi_Q, f\rangle$.
Similarly there holds
\begin{displaymath}
b_2\psi_R \langle g,\psi_R\rangle = \mathop{\sum_{R' \in \mathcal{D}': R' \subset R}}_{\ell(R') = \delta\ell(R)} b_2\chi_{R'}B_{R'},
\end{displaymath}
where $B_{R'} = \langle \psi_R \rangle_{R'} \langle g,\psi_R\rangle$. Thus, we are left with finitely many new subseries
of the form
\begin{displaymath}
\sum_{R \in \mathcal{D}'} B_R\langle \chi_R b_2, T(b_1\chi_Q)\rangle A_Q,
\end{displaymath}
where $Q = Q(R)$ is a new function of $R$ but one still has $\ell(Q) \sim \ell(R)$. Note also that the parents of these cubes are always good.

Given $R$ and then $Q = Q(R)$ as in the above sum, we shall now split the pairing $\langle \chi_R b_2, T(b_1\chi_Q)\rangle$ into several terms. First, we use that given $\upsilon \in (0,1)$ there exists an
almost-covering $\mathcal{B}$ of $\tilde \Delta$ by separated balls in the sense that we have the following properties:
\begin{displaymath}
\left\{ \begin{array}{l}
\mu(\tilde \Delta \setminus \bigcup_{B \in \mathcal{B}} B) \le \upsilon\mu(\tilde \Delta), \\
\Lambda B \subset \Delta \textrm{ for every } B \in \mathcal{B}, \\
d(B, B') \gtrsim_{\upsilon} \max(r_B, r_{B'}) \textrm{ if } B, B' \in \mathcal{B}, \,\, B \ne B', \\
\#\mathcal{B} \lesssim C(\epsilon, \upsilon).
\end{array} \right.
\end{displaymath}
For the details of the probabilistic construction of $\mathcal{B}$, see chapters 8 and 9 of \cite{HM}. We write $\Delta \setminus \bigcup B$ as a disjoint union of $\Omega_i = \tilde \Delta \setminus \bigcup B$ and some
sets $\Omega_Q \subset Q_b$ and $\Omega_R \subset R_b$.

We now decompose
\begin{eqnarray*}
\langle \chi_R b_2,T(b_1\chi_Q)\rangle & = & \langle \chi_{R_{\partial}} b_2, T(b_1\chi_Q) \rangle + \langle \chi_{R_s} b_2, T(b_1\chi_Q) \rangle \\
& + & \langle \chi_{\Delta} b_2,T(b_1\chi_{Q_{\partial}}) \rangle + \langle \chi_{\Delta} b_2,T(b_1\chi_{Q_s}) \rangle \\
& + & \langle \chi_{\Delta \setminus \bigcup B} b_2, T(b_1\chi_{\Delta})\rangle + \langle \chi_{\bigcup B} b_2, T(b_1\chi_{\Delta \setminus \bigcup B}) \rangle \\
& + & \langle \chi_{\bigcup B} b_2, T(b_1\chi_{\bigcup B})\rangle = A + B + C + D + E + F + G.
\end{eqnarray*}
Furhermore, we decompose
\begin{align*}
E &= \langle \chi_{\Delta \setminus \bigcup B} b_2, T(b_1\chi_{\Delta}) \rangle \\
   &= \langle \chi_{\Omega_Q} b_2, T(b_1\chi_{\Delta}) \rangle + \langle \chi_{\Omega_R} b_2, T(b_1\chi_{\Delta}) \rangle
+ \langle \chi_{\Omega_i} b_2, T(b_1\chi_{\Delta}) \rangle\\ &= E_1 + E_2 + E_3
\end{align*}
and
\begin{align*}
F &= \langle \chi_{\bigcup B} b_2, T(b_1\chi_{\Delta \setminus \bigcup B}) \rangle \\
   &= \langle \chi_{\bigcup B} b_2, T(b_1\chi_{\Omega_Q}) \rangle + \langle \chi_{\bigcup B} b_2, T(b_1\chi_{\Omega_R}) \rangle
    +\langle \chi_{\bigcup B} b_2, T(b_1\chi_{\Omega_i}) \rangle \\
    &= F_1 + F_2 + F_3.
\end{align*}
We still write
\begin{align*}
G &= \langle \chi_{\bigcup B} b_2, T(b_1\chi_{\bigcup B}) \rangle \\
   &= \sum_B \langle \chi_B b_2, T(b_1\chi_B) \rangle + \sum_{B \ne B'} \langle \chi_{B'} b_2, T(b_1\chi_B) \rangle = G_1 + G_2.
\end{align*}

It is time to deal with these terms now. These belong to various different groups: we have the terms with separation $B, D$ and $G_2$, the terms $C, E_1$ and $F_1$ involving the bad boundary region $Q_b$,
the terms $A, E_2$ and $F_2$ involving the bad boundary region $R_b$, the terms $E_3$ and $F_3$ involving $\Omega_i$ (and thus $\upsilon$), and, finally, the term $G_1$ which shall be dealt with using the weak boundedness
property. Also, when we sum over $R$ we have to use different kinds of strategies involving simple randomization, the tangent martingale trick and a certain improvement of the contraction principle. In some cases control is gained
only after using the a priori boundedness of $T$, and in these cases it is essential to get a small constant in front so that these may later be absorbed. In addition, the terms with the bad boundary regions
require that we average over all the dyadic grids too.

Let us now do all this carefully. Using the weak boundedness property holding for balls and the facts that $\Lambda B \subset \Delta$ for every $B \in \mathcal{B}$ and
$\#\mathcal{B} \lesssim C(\epsilon, \upsilon)$, we obtain that $G_1 = \alpha_{\Delta} \mu(\Delta)$, where $|\alpha_{\Delta}| \lesssim C(\epsilon, \upsilon)$. Using randomization,
Hölder's inequality and the contraction principle, we obtain (denoting the dyadic parent of $Q$ by $\tilde Q$ and similarly for $R$) that
\begin{align*}
\Big| \sum_R &B_R G_1(R) A_Q \Big| \\
&= \Big| \int_\Omega \int_X \sum_R \epsilon_R \chi_R B_R \sum_Q \epsilon_Q \alpha_{\Delta} A_Q \chi_Q \,d\mu\,d\mathbb{P}\Big|\\
& \le \Big\|\sum_R \epsilon_R \chi_R B_R\Big\|_{L^{p'}(\Omega \times X, Y^*)} \Big\| \sum_Q \epsilon_Q \alpha_{\Delta} A_Q \chi_Q\Big\|_{L^p(\Omega \times X,Y)} \\
&\lesssim C(\epsilon, \upsilon) \Big\|\sum_R \epsilon_R \psi_{\tilde R} \langle g,\psi_{\tilde R}\rangle  \Big\|_{L^{p'}(\Omega \times X, Y^*)}
\Big\| \sum_Q \epsilon_Q  \varphi_{\tilde Q} \langle \varphi_{\tilde Q}, f\rangle \Big\|_{L^p(\Omega \times X,Y)} \\
&\lesssim C(\epsilon, \upsilon) \|g\|_{L^{p'}(X, Y^*)}\|f\|_{L^p(X, Y)}.
\end{align*}

We then deal with the terms for which the summation over $R$ can be handled using this same simple randomization trick (the estimates for the corresponding parts of the matrix element are, of course, different).
One of these terms is $G_2$. We obtain using the first kernel estimate, the doubling property of $\lambda$, the separation of the different balls $B$ and $B'$ and the fact that $B, B' \subset \Delta$
that $|G_2| \lesssim C(\epsilon, \upsilon)\mu(\Delta)$. Also, we have using the a priori boundedness of $T$ and the fact that $\mu(\Omega_i) \le \upsilon\mu(\Delta)$
that $|E_3| \lesssim \upsilon^{1/p'}\|T\| \mu(\Delta)$ and $|F_3| \lesssim \upsilon^{1/p}\|T\|\mu(\Delta)$. Using the above randomization estimate then readily yields that
\begin{displaymath}
\Big| \sum_R B_R G_2(R) A_Q \Big| \lesssim C(\epsilon,\upsilon)\|g\|_{L^{p'}(X, Y^*)}\|f\|_{L^p(X, Y)}
\end{displaymath}
and
\begin{displaymath}
\Big| \sum_R B_R [E_3(R) + F_3(R)] A_Q \Big| \lesssim (\upsilon^{1/p} + \upsilon^{1/p'})\|T\|\|g\|_{L^{p'}(X, Y^*)}\|f\|_{L^p(X, Y)}.
\end{displaymath}

We now deal with the rest of the terms having separation (we already dealt with $G_2$). Namely, let us estimate the terms $B$ and $D$. However, these are so similar that we only explicitly handle $B$ here.
The first kernel estimate yields
\begin{displaymath}
|B| = |\langle \chi_{R_s} b_2, T(b_1\chi_Q) \rangle| \lesssim \int_{R_s} \int_Q \frac{1}{\lambda(y,d(x,y))}\,d\mu(y)\,d\mu(x).
\end{displaymath}
Then we note that $\lambda(y, d(x,y)) \ge \lambda(y, d(x,Q)) \ge \lambda(y, \epsilon \ell(Q)) \gtrsim \epsilon^d \lambda(y, \ell(Q))$. Thus, we may write
\begin{displaymath}
B = \beta_Q \frac{\mu(Q)\mu(R)}{\inf_{y \in Q} \lambda(y, \ell(Q))},
\end{displaymath}
where $|\beta_Q| \lesssim \epsilon^{-d}$ (note that the infimum may be zero only if $\mu(Q) = 0$). Now we may write
\begin{align*}
\sum_R B_RB(R)A_Q &= \sum_R \langle g,\psi_{\tilde R}\rangle \langle \psi_{\tilde R} \rangle_R \beta_Q \frac{\mu(Q)\mu(R)}{\inf_{y \in Q} \lambda(y, \ell(Q))} \langle \varphi_{\tilde Q} \rangle_Q \langle \varphi_{\tilde Q}, f\rangle  \\
&= \sum_R \langle g,\psi_{\tilde R}\rangle \|\psi_{\tilde R}\|_{L^1(\mu)} \frac{\tilde \beta_Q}{\inf_{y \in Q} \lambda(y, \ell(Q))} \|\varphi_{\tilde Q}\|_{L^1(\mu)} \langle \varphi_{\tilde Q}, f\rangle,
\end{align*}
where $|\tilde \beta_Q| \le |\beta_Q| \lesssim \epsilon^{-d}$. Recall that these parents $\tilde R$ and $\tilde Q$ are again good cubes. Also recall that every cube has at most $\lesssim 1$ children. So it remains to study
the series
\begin{displaymath}
\sum_R \langle g,\psi_R \rangle \|\psi_R\|_{L^1(\mu)} \frac{\sigma_Q}{\inf_{y \in Q} \lambda(y, \ell(Q))} \|\varphi_Q\|_{L^1(\mu)} \langle \varphi_Q, f\rangle,
\end{displaymath}
where again $|\sigma_Q| \lesssim \epsilon^{-d}$ (note that $\lambda(y, \ell(\tilde Q)) \lesssim \lambda(y, \ell(Q))$). Using a randomization trick and then reindexing the summation we see that this may be dominated
by $\|g\|_{L^{p'}(X,Y^*)}$ multiplied with
\begin{displaymath}
\Big\| \sum_{k \in \Z} \epsilon_k \sum_{S \in \mathcal{D}_k} \mathop{\sum_{Q \in \mathcal{D}_{\textrm{good}, k + 2r}}}_{Q \subset S}
\|\psi_R\|_{L^1(\mu)} \psi_R(x) \frac{\sigma_Q}{\inf_{y \in Q} \lambda(y, \ell(Q))} \|\varphi_Q\|_{L^1(\mu)} \langle\varphi_Q, f\rangle\Big\|_{L^p(\Omega \times X, Y)}.
\end{displaymath}
Since $R$ is good, $\ell(R) \le \delta^{-r}\ell(Q) = \delta^r \ell(S)$ and $CC_0\ell(R) > d(Q,R)$, one easily checks that $R \subset S$ (if $r$ is large enough). We then set for $S \in \mathcal{D}_k$ that
\begin{displaymath}
K_S(x,y) = \epsilon^d \mathop{\sum_{Q \in \mathcal{D}_{\textrm{good}, k + 2r}}}_{Q \subset S} \|\psi_R\|_{L^1(\mu)} \psi_R(x) \frac{\mu(S)}{\inf_{w \in Q} \lambda(w, \ell(Q))}\sigma_Q \|\varphi_Q\|_{L^1(\mu)} \varphi_Q(y) b_1(y),
\end{displaymath}
and note that the previous majorant can now be written in the form
\begin{displaymath}
\epsilon^{-d}\Big\| \sum_{k \in \Z} \epsilon_k \sum_{S \in \mathcal{D}_k} \frac{\chi_S(x)}{\mu(S)} \int_S K_S(x,y) \frac{\chi_S(y)\Delta^{b_1}_{k+2r}f(y)}{b_1(y)}\,d\mu(y)\Big\|_{L^p(\Omega \times X, Y)},
\end{displaymath}
which is amenable to the tangent martingale trick as is next demonstrated. Indeed, just note that $K_S$ is supported on $S \times S$ and that $|K_S(x,y)| \lesssim 1$ holds, and  then
divide the summation over $k$ into $\lesssim 1$ appropriate pieces to get that
\begin{displaymath}
\Big| \sum_R B_RB(R)A_Q \Big| \lesssim \epsilon^{-d}\|g\|_{L^{p'}(X,Y^*)}\|f\|_{L^p(X,Y)}.
\end{displaymath}
The same, as already stated earlier, works with $B$ replaced by $D$.

It still remains to deal with the terms involving bad boundary regions. The small term in front of $\|T\|$ is gained only after averaging over the dyadic grids $\mathcal{D}$ and $\mathcal{D}'$.
Somewhat tediously we have six ($A$, $C$, $E_1$, $E_2$, $F_1$ and $F_2$) kind of similar terms to deal with. We only deal with the term $E_1 = \langle \chi_{\Omega_Q} b_2, T(b_1\chi_{\Delta}) \rangle$
-- we chose this term as it shares the additional (albeit small) difficulty with the
term $F_2$ (not present in the four other cases) that the bad boundary region part is in some sense in the unnatural slot (here $\Omega_Q \subset Q_b$ is in the slot with $b_2$). What is useful here is that
everything is inside $\Delta$ anyway.

We turn to the details. Using randomization, Hölder's inequality and the a priori boundedness of $T$ one gets
\begin{displaymath}
\Big| \sum_R B_RE_1(R)A_Q \Big| \le \|T\| \Big\| \sum_R \epsilon_R B_R \chi_{\Omega_{Q(R)}} b_2 \Big\|_{L^{p'}(\Omega \times X, Y^*)} \Big\| \sum_Q \epsilon_Q A_Q b_1  \chi_{\Delta}\Big\|_{L^p(\Omega \times X, Y)}. 
\end{displaymath}
Now the second term is easily seen to be dominated by $\|f\|_{L^p(X,Y)}$ using the contraction principle and unconditionality.

The first term is more involved since it is here that the small factor needs to be extracted. Let us define
\begin{displaymath}
\delta(k) = \bigcup_{j=k-2r}^{k+2r} \bigcup_{R \in \mathcal{D}_j'} \delta_R.
\end{displaymath}
Note that if gen$(R) = k$, then gen$(Q(R)) \in [k-r, k+r]$, and so we must have $\chi_{\Omega_{Q(R)}} = \chi_{\Omega_{Q(R)}}\chi_{\delta(k)}\chi_R$ (recall that $\Omega_{Q(R)} \subset \Delta \subset R$).
Throwing $\chi_{\Omega_{Q(R)}}$ and $b_2$ away using the contraction principle, we get
\begin{displaymath}
\Big\| \sum_R \epsilon_R B_R \chi_{\Omega_{Q(R)}} b_2 \Big\|_{L^{p'}(\Omega \times X, Y^*)} \lesssim \Big\| \sum_{k \in \Z} \epsilon_k\chi_{\delta(k)}
\sum_{R \in \mathcal{D}'_k} B_R \chi_R\Big\|_{L^{p'}(\Omega \times X, Y^*)}.
\end{displaymath}
Now, keeping everything else fixed, we take the conditional expectation of this over the grids $\mathcal{D}'$. Using Jensen's inequality and Fubini's theorem, we get
\begin{align*}
\mathbb{E} \Big\| &\sum_{k \in \Z} \epsilon_k\chi_{\delta(k)} \sum_{R \in \mathcal{D}'_k} B_R \chi_R\Big\|_{L^{p'}(\Omega \times X, Y^*)} \\
&\lesssim \Big( \int_X \mathbb{E} \Big\| \sum_{k \in \Z} \epsilon_k\chi_{\delta(k)}(x) \sum_{R \in \mathcal{D}'_k} B_R \chi_R(x)\Big\|_{L^{p'}(\Omega, Y^*)}^{p'} \,d\mu(x) \Big)^{1/p'}.
\end{align*}
In order to gain access to a certain improvement of the contraction principle (to be formulated shortly), it is still beneficial to further dominate this by
\begin{displaymath}
\Big( \int_X \Big[\mathbb{E} \Big\| \sum_{k \in \Z} \epsilon_k\chi_{\delta(k)}(x) \sum_{R \in \mathcal{D}'_k} B_R \chi_R(x)\Big\|_{L^{p'}(\Omega, Y^*)}^t\Big]^{p'/t} \,d\mu(x) \Big)^{1/p'},
\end{displaymath}
where $t \ge p'$. We now fix $t$ once and for all demanding only that it is larger than $p$, $p'$, the cotype of $Y$ and the cotype of $Y^*$ (recall that the dual of a UMD space is UMD and that
a UMD space has nontrivial cotype). The requirements involving $p$ and the cotype of $Y$ are only needed when handling some of the other similar terms.

We now formulate the contraction principle we need (this is \cite[Lemma 3.1]{HV}).
\begin{prop}
Suppose $Z$ is a Banach space of cotype $s \in [2, \infty)$, $\xi_j \in Z$, $s < u < \infty$ and $\theta_j \in L^u(\tilde \Omega)$ (here $\tilde \Omega$ is just some probability space). Then
\begin{displaymath}
\Big\| \sum_{j=1}^{\infty} \epsilon_j\theta_j\xi_j \Big\|_{L^u(\tilde \Omega, L^2(\Omega, Z))} \lesssim \sup_j \|\theta_j\|_{L^u(\tilde \Omega)} \Big\|\sum_{j=1}^{\infty} \epsilon_j \xi_j\Big\|_{L^2(\Omega, Z)}.
\end{displaymath}
\end{prop}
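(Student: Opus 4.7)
The plan is to mirror the scalar case ($Z=\mathbb{C}$, $s=2$), where Khintchine turns the LHS into $\|(\sum_j|\theta_j|^2|\xi_j|^2)^{1/2}\|_{L^u(\tilde\Omega)}$ and then the triangle inequality in $L^{u/2}(\tilde\Omega)$ (valid because $u>2$) immediately gives the bound $\sup_j\|\theta_j\|_{L^u}\cdot(\sum_j|\xi_j|^2)^{1/2}$. The role of $L^2$ must be replaced by $L^s$ in the Banach space setting, with the condition $u>s$ playing the same role that $u>2$ did above.

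First I would use Kahane's inequality in $Z$ (in any Banach space, all $L^p$-norms of Rademacher sums are equivalent) to replace the inner $L^2(\Omega,Z)$-norm on the LHS by the $L^s(\Omega,Z)$-norm. I would then reorganize, via Kahane in the Bochner space $L^u(\tilde\Omega,Z)$ and Fubini, until the Rademacher sum $\sum_j\epsilon_j\theta_j\xi_j$ is viewed as a sum with coefficients in $L^u(\tilde\Omega,Z)$, aiming for the scalar-valued quantity $\|(\sum_j|\theta_j(\tilde\omega)|^s\|\xi_j\|_Z^s)^{1/s}\|_{L^u(\tilde\Omega)}$.

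Granted this reduction, the triangle inequality in $L^{u/s}(\tilde\Omega)$, valid since $u/s>1$, yields
\begin{displaymath}
\Big\|\sum_j|\theta_j|^s\|\xi_j\|_Z^s\Big\|_{L^{u/s}(\tilde\Omega)} \le \sum_j\|\theta_j\|^s_{L^u(\tilde\Omega)}\|\xi_j\|^s_Z \le \sup_j\|\theta_j\|^s_{L^u(\tilde\Omega)}\cdot\sum_j\|\xi_j\|_Z^s,
\end{displaymath}
and finally the cotype-$s$ inequality $(\sum_j\|\xi_j\|_Z^s)^{1/s}\lesssim\|\sum_j\epsilon_j\xi_j\|_{L^2(\Omega,Z)}$ produces exactly the RHS.

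The hard part is the reduction step, because cotype alone only provides the reverse bound $(\sum_j|\theta_j(\tilde\omega)|^s\|\xi_j\|_Z^s)^{1/s}\lesssim\|\sum_j\epsilon_j\theta_j(\tilde\omega)\xi_j\|_{L^s(\Omega,Z)}$, which is a \emph{lower} bound on the Rademacher norm. Turning this into the desired upper bound — the real content of \cite[Lemma 3.1]{HV} — requires a careful interplay between Kahane's inequality, the $L^u(\tilde\Omega,Z)$-valued randomization of the $\theta_j$, and the gap $u>s$. This, rather than the subsequent triangle-inequality bookkeeping, is the main obstacle I would expect to grind out when filling in the details.
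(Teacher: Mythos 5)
There is a genuine gap, and it is not merely the unproven step you flag at the end: the intermediate inequality your whole plan aims for is false. You want to dominate $\big\| \sum_j \epsilon_j\theta_j\xi_j \big\|_{L^u(\tilde\Omega, L^2(\Omega,Z))}$ by $\big\| \big(\sum_j|\theta_j|^s\|\xi_j\|_Z^s\big)^{1/s}\big\|_{L^u(\tilde\Omega)}$ and only then use the triangle inequality in $L^{u/s}$ and cotype. Test this with $\theta_j\equiv 1$: it would assert the ``type-$s$'' bound $\big\|\sum_j\epsilon_j\xi_j\big\|_{L^2(\Omega,Z)}\lesssim\big(\sum_j\|\xi_j\|_Z^s\big)^{1/s}$ with $s\ge 2$. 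This fails already when $Z$ is an infinite-dimensional Hilbert space and $s>2$ (a choice allowed by the hypotheses, since cotype $s$ holds for every $s\ge2$ there): $n$ orthonormal vectors give $\sqrt n$ on the left versus $n^{1/s}$ on the right. It also fails for $s=2$ in a general cotype-$2$ space such as $L^1$ (take $n$ disjointly supported unit vectors: $n$ versus $\sqrt n$), because the bound would require type $2$. As you yourself observe, cotype gives exactly the reverse inequality; the point is that no amount of Kahane, Fubini, or exploitation of the gap $u>s$ can reverse it, because the inequality you are trying to reach is simply untrue. The scalar model is misleading here: for $Z=\C$, $s=2$ the Khintchine step is an identity, i.e.\ it secretly uses type $2$ as well as cotype $2$, and that half of the identity is precisely what a general finite-cotype space lacks. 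Any correct argument must therefore keep the Rademacher sum $\big\|\sum_j\epsilon_j\xi_j\big\|_{L^2(\Omega,Z)}$ intact on the right-hand side and cannot pass through an $\ell^s$-sum of norms in between; your final two steps (triangle inequality in $L^{u/s}$ and the cotype estimate) are correct but are attached to an unreachable intermediate quantity.

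For comparison: the paper itself gives no proof of this proposition; it is quoted verbatim from \cite{HV} (Lemma 3.1 there). The actual difficulty is the one you half-identify: the ordinary contraction principle only allows the pointwise bound $\sup_j|\theta_j(\tilde\omega)|$, and the content of the lemma is that finite cotype together with $u>s$ lets one replace this by $\sup_j\|\theta_j\|_{L^u(\tilde\Omega)}$. The known proofs achieve this by comparison results for randomized sums with general symmetric random coefficients having uniformly bounded $u$-th moments in spaces of finite cotype (Maurey--Pisier type arguments, e.g.\ via Gaussian comparison), not by any square-function or type-style upper estimate. If you want to complete a proof along honest lines, that is the mechanism to pursue; the route through $\big(\sum_j|\theta_j|^s\|\xi_j\|_Z^s\big)^{1/s}$ should be abandoned.
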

Utilizing the above contraction principle together with Lemma \ref{boundarylemma} and Kahane's inequality gives (here the $L^t$ norm is taken over the probability space used
in the randomization of $\mathcal{D}'$)
\begin{align*}
\mathbb{E} \Big\|& \sum_R \epsilon_R B_R \chi_{\Omega_{Q(R)}} b_2 \Big\|_{L^{p'}(\Omega \times X, Y^*)} \\
&\lesssim \Big( \int_X  \sup_{k \in \Z} \|\chi_{\delta(k)}(x)\|_{L^t}^{p'} \Big\| \sum_{k \in \Z} \epsilon_k \sum_{R \in \mathcal{D}'_k} B_R \chi_R(x)\Big\|_{L^{p'}(\Omega, Y^*)}^{p'} \,d\mu(x) \Big)^{1/p'} \\
&\lesssim \epsilon^{\eta/t} \Big\| \sum_R \epsilon_R B_R \chi_R\Big \|_{L^{p'}(\Omega \times X, Y^*)} \\
&\lesssim \epsilon^{\eta/t} \|g\|_{L^{p'}(X, Y^*)}.
\end{align*}

We now formulate the above considerations as a proposition.
\begin{prop}
Let $\epsilon > 0$ and $\upsilon \in (0,1)$.
We have the estimate
\begin{align*}
\mathbb{E} \Big| \sum_{R \in \mathcal{D}'_{\textrm{good}}}& \mathop{\sum_{Q \in \mathcal{D}_{\textrm{good}}:\, \ell(Q) \sim \ell(R)}}_{d(Q, R) < CC_0\min(\ell(Q), \ell(R))}
\langle g, \psi_R \rangle T_{RQ} \langle \varphi_Q, f \rangle \Big| \\
&\lesssim C(\epsilon, \upsilon)\|g\|_{L^{p'}(X,Y^*)}\|f\|_{L^p(X,Y)} \\
&+ \|T\|c(\epsilon, \upsilon)\|g\|_{L^{p'}(X,Y^*)}\|f\|_{L^p(X,Y)},
\end{align*}
where we average over all the random quantities used in the randomization of the cubes, and $c(\epsilon, \upsilon)$ can be made arbitrarily small by choosing $\epsilon$ and $\upsilon$ small enough.
\end{prop}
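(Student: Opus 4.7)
The plan is to assemble the term-by-term bounds developed throughout this section into the single claimed estimate. After the reductions to subseries indexed by the choice $Q = Q(R)$ and the factorization $b_1\varphi_Q\langle\varphi_Q,f\rangle = \sum_{Q'\subset Q} b_1\chi_{Q'} A_{Q'}$ (and analogously for $b_2\psi_R$), it suffices to bound sums of the form $\sum_R B_R \langle\chi_R b_2, T(b_1\chi_Q)\rangle A_Q$ using the splitting of $\langle\chi_R b_2, T(b_1\chi_Q)\rangle$ into the twelve pieces $A, B, C, D, E_1, E_2, E_3, F_1, F_2, F_3, G_1, G_2$ introduced above. I would organize these pieces into three groups according to the estimation strategy they require.

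The first group consists of the principal weak-boundedness term $G_1$ together with the separated terms $B, D, G_2$. Each admits a pointwise estimate of the form $|\cdot| \lesssim C(\epsilon,\upsilon)\cdot(\text{geometric factor})$, obtained respectively from the weak boundedness property (for $G_1$) and from the first kernel estimate combined with the spatial separation (for $B, D, G_2$). When summed over $R$ these yield the contribution $C(\epsilon,\upsilon)\|g\|_{L^{p'}(X,Y^*)}\|f\|_{L^p(X,Y)}$: for $G_1$ via the elementary randomization argument already spelled out above, and for $B, D, G_2$ via Lemma \ref{rantrick} followed by the tangent martingale trick, exactly as carried out in detail for $B$ (using a suitable kernel $K_S$ supported on $S\times S$ with $\|K_S\|_\infty\lesssim 1$).

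The second group consists of $E_3$ and $F_3$, whose smallness comes directly from the measure estimate $\mu(\Omega_i)\le\upsilon\mu(\Delta)$ built into the construction of $\mathcal{B}$. A direct application of randomization and H\"older's inequality against the a priori bound $\|T\|$ yields a contribution of order $(\upsilon^{1/p}+\upsilon^{1/p'})\|T\|\|g\|_{L^{p'}(X,Y^*)}\|f\|_{L^p(X,Y)}$, which I would absorb into the $c(\epsilon,\upsilon)\|T\|$ summand on the right-hand side.

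The third and most delicate group comprises the six bad-boundary terms $A, C, E_1, E_2, F_1, F_2$. For the representative term $E_1$ the scheme worked out above proceeds in four stages: randomize and apply H\"older to factor out $\|T\|$; use the inclusion $\Omega_{Q(R)}\subset\delta(k)$; take conditional expectation over the grid $\mathcal{D}'$ via Jensen and Fubini so that the expectation can be pushed inside the spatial integral; and then invoke the improved $L^t$-contraction principle together with Lemma \ref{boundarylemma} to convert the conditional expectation of $\chi_{\delta(k)}$ into the factor $\epsilon^{\eta/t}$. The same scheme handles $F_2$ verbatim, and the remaining four terms $A, C, E_2, F_1$ are technically easier since the bad-boundary region sits in the more natural slot (no extra $b_2$ must be carried through the contraction principle). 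Setting $c(\epsilon,\upsilon)=\upsilon^{1/p}+\upsilon^{1/p'}+\epsilon^{\eta/t}$ and summing the three groups yields the claim, and this quantity is indeed arbitrarily small for $\epsilon,\upsilon$ small. The main obstacle is orchestrating the grid-averaging, the improved contraction principle, and the boundary lemma simultaneously in the bad-boundary analysis so that the small factor is actually extracted; all other estimates are local and follow the template already established for the separated cubes.
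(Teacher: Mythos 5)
Your proposal is correct and follows essentially the same route as the paper: the proposition is precisely the summary of the section's term-by-term analysis, and you assemble the same decomposition into $A,B,C,D,E_{1,2,3},F_{1,2,3},G_{1,2}$, the same weak-boundedness/separation/smallness/bad-boundary grouping, and the same machinery (simple randomization, Lemma \ref{rantrick} plus the tangent martingale trick, grid averaging with the improved $L^t$-contraction principle and Lemma \ref{boundarylemma}). One small correction: the paper treats $G_2$ (like $E_3$ and $F_3$) by the same elementary bilinear randomization used for $G_1$, since its bound $|G_2|\lesssim C(\epsilon,\upsilon)\mu(\Delta)=C(\epsilon,\upsilon)\int\chi_Q\chi_R\,d\mu$ fits that scheme, whereas running it ``exactly as for $B$'' would be awkward because the $B$-scheme's kernel normalization rests on the bound $\mu(Q)\mu(R)/\inf_{y\in Q}\lambda(y,\ell(Q))$, which need not dominate $\mu(\Delta)$ for a non-doubling $\mu$ -- a harmless regrouping, since the simpler argument you already describe for $G_1$ applies verbatim to $G_2$.
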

\begin{rem}
Recall that when we dealt with the separated cubes in Proposition \ref{sepcubesprop} we had the assumption that the adapted Haar functions related to the smaller cubes are cancellative. Note that
there are only boundedly many terms with $\ell(Q) = \ell(R) = \delta^m$ where the contrary can happen (due to the assumptions about the supports of the functions $f$ and $g$). Thus, the relevant arguments
involving separated sets used in the present chapter let us also remove this assumption.
\end{rem}
\section{Completion of the proof}
Combining all that we have done in the previous sections shows that
\begin{displaymath}
\mathbb{E} \Big|\mathop{\sum_{Q \in \mathcal{D}_{\textrm{good}},\, R \in \mathcal{D}_{\textrm{good}}'}}_{\delta^{k_0} < \ell(Q), \, \ell(R) \le \delta^m} \sum_{u,v} \langle \varphi^{b_2}_{R,v}, g\rangle
\langle b_2\varphi^{b_2}_{R,v}, T(b_1\varphi^{b_1}_{Q,u})\rangle \langle \varphi^{b_1}_{Q,u} f\rangle\Big| \lesssim C(\epsilon, \upsilon) + c(\epsilon, \upsilon)\|T\|,
\end{displaymath}
where $c(\epsilon, \upsilon) \to 0$ when $\epsilon \to 0$ and $\upsilon \to 0$. Recalling (\ref{normest}) the estimate $\|T\| \lesssim 1$ follows by taking $\epsilon$ and $\upsilon$ small enough.
We have proved what we set out to prove, namely Theorem \ref{maintheorem}.
\bibliographystyle{amsalpha}%{ams-pln}%{plain}%
\bibliography{metric}
\end{document}